\newcommand{\iso}{\cong}
\newcommand{\Z}{\mbb Z}
\newcommand{\tr}{\Delta}
\newcommand{\mbb}{\mathbb}
\newcommand{\mc}{\mathcal}
\newcommand{\A}{\mc A}
\newcommand{\R}{\mbb R}
\newcommand{\C}{\mbb C}
\newcommand{\abs}[1]{\left| #1  \right|}
\renewcommand{\d}{\mathrm{d}}
\newcommand{\op}{\operatorname}
\newcommand{\Met}{\op{Met}}
\newcommand{\cinfty}{C^\infty}
\renewcommand{\Tr}{\op{Tr}}
\newcommand{\br}{\overline}
\newcommand{\limdir}{\varinjlim}
\renewcommand{\det}{\op{det}}
\newcommand{\End}{\operatorname{End}}
\newtheorem{thm}{Theorem}
\newtheorem{cor}[thm]{Corollary}
\newtheorem{lem}[thm]{Lemma}
\newtheorem{prop}[thm]{Proposition} 
\theoremstyle{definition}
\newtheorem{defn}[thm]{Definition}
\newtheorem{remk}[thm]{Remark}
\newtheorem{ex}[thm]{Example}
\newtheorem{definition}[thm]{Definition}
\newtheorem{theorem}[thm]{Theorem}
\newtheorem{lemma}[thm]{Lemma}
 \newtheorem*{ack}{Acknowledgements}
\DeclareMathOperator{\dens}{Densities}
\DeclareMathOperator{\Hom}{Hom}
\begin{document}

\title[Closed String TCFT for Hermitian Calabi-Yau Elliptic Spaces]
{Closed String TCFT for Hermitian Calabi-Yau Elliptic Spaces}

\author[K.~Costello]{Kevin~Costello}
\address{Kevin Costello, Department of Mathematics, Northwestern University,
2033 Sheridan Road Evanston, IL 60208-2730, USA}
\email{costello@math.northwestern.edu}

\author[T.~Tradler]{Thomas~Tradler}
\address{Thomas Tradler, Department of Mathematics, College of Technology of the City University
of New York, 300 Jay Street, Brooklyn, NY 11201, USA}
\email{ttradler@citytech.cuny.edu}

\author[M.~Zeinalian]{Mahmoud~Zeinalian}
\address{Mahmoud Zeinalian, Department of Mathematics, C.W. Post Campus
of Long Island University, 720 Northern Boulevard, Brookville, NY
11548, USA} \email{mzeinalian@liu.edu}

\begin{abstract}
We describe an explicit action of the prop of the chains on the moduli space of Riemann surfaces on the Hochschild complex of a  Calabi-Yau elliptic space.   One example of such an elliptic space extends the known string topology operations, for all compact simply-connected manifolds, to a collection indexed by the de Rham currents on the moduli space. Another example pertains to the B-model at all genera.
\end{abstract}

\maketitle \setcounter{tocdepth}{1} \tableofcontents

\section{Introduction}

In \cite{C3}, the first author gave a construction of an open topological conformal field theory (TCFT) for a Calabi-Yau elliptic space using the machineries of heat kernels and differential forms on a certain moduli space of metrised ribbon graphs as a piecewise linear space. The most basic example of a Calabi-Yau elliptic space is the de Rham complex of differential forms on a closed and oriented manifold. Other examples of Calabi-Yau elliptic spaces come from holomorphic vector bundles on a Calabi-Yau manifold,  Yang-Mills bundles on a $4$-manifold \cite{C3},   or certain vector bundles on G2 manifolds \cite{DT}. 

The aim of this paper is to extend the above open TCFT to a closed TCFT, in an explicit way. 

There are two existing constructions of a closed TCFT from a cyclic $A_\infty$ algebra, due to Kontsevich and Soibelman \cite{K, KS2} and the first author \cite{C1}.   One can turn a Calabi-Yau elliptic space into a finite dimensional cyclic $A_\infty$ algebra, using the explicit form of the homological perturbation lemma \cite{KS1}.    Thus, both of the existing constructions apply in our situation.

In addition, the results of the first author \cite{C1} give an abstract way to construct a closed TCFT from any open TCFT; one can apply this directly to the open TCFT constructed in \cite{C3}.

One could thus ask why there is a need for a new construction.  The answer is that the existing constructions are ungeometric.      For instance, in the case of the de Rham complex of a manifold $M$, the existing constructions start by replacing the de Rham complex by the cohomology $H^\ast(M)$ and, from there, using homological algebra.  By contract, the construction we give in this paper works directly with the geometry of the manifold $M$.

The results of \cite{C1} (where the closed TCFT was constructed to satisfy a universal property) imply that the geometric constructions of this paper are equivalent to the algebraic constructions of \cite{C1}.

Thus, in this paper,   we will define an explicit action of a model for the prop of chains on the moduli space of Riemann surfaces, on the Hochschild complex of a Calabi-Yau elliptic space.   The construction depends on a Hermitian metric on the  CY elliptic space (in the example of the de Rham complex of a manifold, this comes from a metric on the manifold).  However, the resulting structure is independent up to homotopy of the choice of the Hermitian metric.

To this end, we will define the prop of meta-graphs $MG$, which we will show in Proposition \ref{MG=S} to be weakly equivalent to Segal's prop $\mc S$.     We will show that the prop $MG$ has an explicit lax action on a certain model for the Hochschild chain complex of $\A$. That is:
\setcounter{thm}{30} %Theorem 31
\begin{thm}
There is a lax algebra over the prop $C_\ast(MG)$ which sends
$$
n \mapsto L_\ast(\A)[n]
$$
where $L_\ast(\A)[n]$ is a certain chain complex quasi-isomorphic to the $n^{th}$ tensor power of the Hochschild chain complex of $\A$.

The structure of lax algebra is given by chain maps
$$
C_\ast(MG(n,m) ) \otimes L_\ast(\A)[n] \to L_\ast(\A)[m]
$$
compatible with differentials, composition, and tensor product maps.
\end{thm}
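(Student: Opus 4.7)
The plan is to assign to each meta-graph a concrete operation built from heat kernel propagators on the Hermitian Calabi-Yau elliptic space, and then to package this assignment into a prop map on chains. Recall from \cite{C3} that a Hermitian metric on $\A$ produces an inner product and thus a heat kernel $K_t \in \A \otimes \A$ associated to the Laplacian. My strategy is to mimic the open-string construction of \cite{C3} but to replace the ribbon graphs of that paper by meta-graphs, using the semigroup property $K_s \ast K_t = K_{s+t}$ and the Calabi-Yau trace to handle closed loops.

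The first step would be to give the precise definition of $L_\ast(\A)[n]$. Following \cite{C3}, I would define it as a completed tensor product of cyclic bar-like complexes in which each contraction of two tensor factors is performed by pairing against the heat kernel $K_t$ for some positive time $t$; the quasi-isomorphism with the $n$-fold tensor power of the Hochschild complex is then a standard consequence of the Hodge-theoretic properties of elliptic operators (heat kernel regularisation of the identity as $t \to 0$). The second step is to associate to an element $\Gamma \in MG(n,m)$, with internal edges labelled by length parameters $t_e > 0$ and vertices carrying the appropriate cyclic/Hochschild data, an operator $L_\ast(\A)[n] \to L_\ast(\A)[m]$ built by: placing the input $n$-tuple at the $n$ incoming external half-edges, propagating along each internal edge by $K_{t_e}$, contracting cyclically at each vertex using the $A_\infty$ structure coming from the homological perturbation lemma \cite{KS1}, and taking the CY trace around each closed face. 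Integrating against a chain $\omega \in C_\ast(MG(n,m))$ over the moduli of edge lengths yields the desired chain map.

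The third step is to check that the differential is intertwined and that composition and tensor product are respected. Compatibility with the tensor product is essentially immediate from the construction, since meta-graphs combine by disjoint union. Compatibility with composition reduces to the semigroup law $K_s \ast K_t = K_{s+t}$ for the heat kernels decorating the edges created by gluing. The chain map property is the delicate calculation: the differential in $C_\ast(MG)$ picks up boundary strata of the moduli of meta-graphs corresponding to an edge length $t_e \to 0$ (collapse) or $t_e \to \infty$ (degeneration). The $t_e \to 0$ boundary matches, via heat kernel asymptotics, the internal Hochschild differential composed with the contraction of the collapsed edge, while the $t_e \to \infty$ boundary cancels with similar contributions by elliptic regularity and finiteness of the CY trace.

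The main obstacle is the analytic control at these boundary strata simultaneously with the combinatorial bookkeeping of signs. The point at which $t_e \to 0$ is where one most needs the finite-dimensional replacement of $\A$ via homological perturbation or, equivalently, careful heat kernel regularisation; the point at which $t_e \to \infty$ uses the projection onto harmonic representatives. Organising these contributions so that they cancel against the Hochschild differential in $L_\ast(\A)$ and against the simplicial boundary of the moduli of meta-graphs is the technical heart of the proof, and it is here that the equivalence $MG \simeq \mc S$ established in Proposition \ref{MG=S} is used to keep the combinatorics of meta-graphs manageable compared with general Segal surfaces.
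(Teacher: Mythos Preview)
Your proposal misses the central mechanism of the proof and misidentifies several of the objects involved.

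First, $L_\ast(\A)[n]$ is not defined via heat kernels at all. It is the purely combinatorial complex of Definition \ref{LA[m]}: a quotient of $\bigoplus \A^{\otimes l_1+1}\otimes\cdots\otimes\A^{\otimes l_m+1}\otimes C_\ast(\Delta_{l_1}\times\cdots\times\Delta_{l_m})$ by simplicial face relations. The quasi-isomorphism with the Hochschild complex is a simplicial realisation statement (Proposition \ref{LA-quasi-isos}), not a heat-kernel regularisation.

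Second, and more seriously, you attempt to build the action directly from meta-graphs by placing propagators on edges and $A_\infty$ contractions at vertices. This is not how the paper proceeds, and your version would be difficult to make sense of: the vertices of a meta-graph are labelled by ribbon graphs $RG_v\in G(v_{in},v_{out})$, not by $A_\infty$ operations, and the internal edges of a meta-graph carry no length parameter $t_e$. The actual proof \emph{reduces} everything to the open TCFT already constructed in \cite{C3}. The key step is a map
\[
\Phi:\Delta_{k_1}\times\cdots\times\Delta_{k_n}\times MG(n,m)\ \longrightarrow\ \Gamma\Big(n+\textstyle\sum k_i,\,m\Big)
\]
which attaches extra incoming external edges to the ribbon graphs $RG_v$ at positions dictated by the simplex coordinates, rescales outgoing boundary circles so they match, and then glues all the $RG_v$ into a single open-string ribbon graph. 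One then removes the $m$ outgoing circles to get a graph $\gamma'$, applies the black-box map $K$ of Theorem \ref{Kevins-map} to obtain a form in $\Omega^\ast(\Met(\gamma'))\otimes\A^{\otimes l_1}\otimes\cdots\otimes\A^{\otimes l_m}$, caps against the given chain, and pushes forward along $\Met(\gamma)\to\Delta_{l_1}\times\cdots\times\Delta_{l_m}$ to land in $L_\ast(\A)[m]$.

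Because all the analytic content is encapsulated in the map $K$ from \cite{C3}, compatibility with differentials is \emph{automatic}; there is no boundary analysis of $t_e\to 0$ or $t_e\to\infty$ to perform, and no cancellation argument of the kind you sketch. What remains to check is purely combinatorial: compatibility with the direct limit over strata, and with composition in $MG$. Your invocation of the homological perturbation lemma is exactly what the paper is designed to avoid; the whole point, stressed in the introduction, is that one works directly with $\A$ and never replaces it by a finite-dimensional model. Finally, Proposition \ref{MG=S} plays no role in the construction of the action; it is used only afterwards to pass from $MG$ to Segal's prop $\mc S$ in Corollary \ref{homology-cor}.
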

\setcounter{thm}{0}

This in turn, after passing to homology, gives a host of operations on the Hochschild homology of a Calabi-Yau elliptic space, indexed by the homology of the Segal prop. That is to say,

\setcounter{thm}{31} %Corollary 32
\begin{cor}
After passing to homology the above lax algebra structure yields a homological field theory on the Hochschild homology. That is, there is the following map of props,
$$H_\ast(\mc S(n, m)) \to \text{Hom} (HH_\ast (\mc A, \mc A)^{\otimes n}, HH_\ast (\mc A, \mc A)^{\otimes m} ).$$
\end{cor}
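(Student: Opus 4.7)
The plan is to derive the corollary from the theorem by passing to homology. Starting from the chain-level structure maps
\[
C_\ast(MG(n,m)) \otimes L_\ast(\A)[n] \to L_\ast(\A)[m]
\]
I would first apply the Künneth theorem (working over a field, which the paper presumably does to define the Hochschild complex with the required grading) to interpret the homology of the left-hand side as $H_\ast(MG(n,m)) \otimes H_\ast(L_\ast(\A)[n])$. Since $L_\ast(\A)[n]$ is by construction quasi-isomorphic to $HH_\ast(\A,\A)^{\otimes n}$, this yields induced maps
\[
H_\ast(MG(n,m)) \otimes HH_\ast(\A,\A)^{\otimes n} \to HH_\ast(\A,\A)^{\otimes m},
\]
equivalently, maps $H_\ast(MG(n,m)) \to \Hom(HH_\ast(\A,\A)^{\otimes n}, HH_\ast(\A,\A)^{\otimes m})$.

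Next, I would invoke Proposition \ref{MG=S}, which asserts that $MG$ is weakly equivalent to Segal's prop $\mc S$, to identify $H_\ast(MG(n,m)) \cong H_\ast(\mc S(n,m))$ as graded vector spaces, and in fact as a prop since the equivalence is of props. This replaces the source by $H_\ast(\mc S(n,m))$, giving the map stated in the corollary.

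The remaining point is to upgrade these maps into a \emph{strict} map of props, starting from a \emph{lax} algebra structure at the chain level. A lax algebra structure means that the compatibility with composition and with tensor product holds only up to coherent quasi-isomorphism (rather than on the nose). After passing to homology, however, quasi-isomorphisms become genuine isomorphisms of graded vector spaces, so the lax coherence data trivialize and the induced structure is a strict prop action. I would make this precise by recording, for each pair of composable morphisms in $MG$ and each pair of tensorable ones, the comparison maps from the theorem and verifying that the homological counterparts of the commuting-up-to-quasi-iso squares become honest commuting squares on $H_\ast$.

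I expect the main obstacle to be the careful bookkeeping of the lax-to-strict passage: one must check that the zig-zags comparing $L_\ast(\A)[n] \otimes L_\ast(\A)[n']$ with $L_\ast(\A)[n+n']$, and those comparing composite operations on $L_\ast(\A)$ with the action of composites in $MG$, are all quasi-isomorphisms, so that the induced diagrams on homology commute strictly. Granted the theorem and Proposition \ref{MG=S}, everything else is a formal consequence of functoriality of $H_\ast$ and the Künneth isomorphism.
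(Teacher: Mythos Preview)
Your approach is correct and is essentially what the paper intends; the paper states the corollary without a separate proof, treating it as an immediate consequence of Theorem~\ref{main-thm} together with Proposition~\ref{MG=S} and the quasi-isomorphisms $L_\ast(\A)[n]\simeq CH_\ast(\A,\A)^{\otimes n}$.

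One small point of precision: in this paper's terminology (Definition~6), a lax algebra over a dg prop is a genuine \emph{functor} (so composition is respected strictly) which is only \emph{lax symmetric monoidal}, meaning the tensor product comparison maps $F(n)\otimes F(m)\to F(n+m)$ are quasi-isomorphisms rather than isomorphisms. Thus the ``lax-to-strict'' passage you describe is simpler than you suggest: compatibility with composition already holds on the nose at the chain level, and only the monoidal comparison maps need to be inverted after applying $H_\ast$. Your argument still goes through, just with less bookkeeping than you anticipated.
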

\setcounter{thm}{0}

The most basic example of a Calabi-Yau elliptic space is the de Rham complex of a closed and oriented manifold $M$. In this case, if the manifold at hand is simply connected, one gets operations on the cohomology of the free loop space indexed by the homology of the moduli space. 

Of course, as we remarked earlier, the results of \cite{KS2} and \cite{C1}, applied to the cyclic A-infinity algebra structure on the cohomology of $M$,  give all-genus constructions of string topology operations.  The construction here is equivalent, but much more geometric.

The geometric nature of the construction of this paper allows direct comparison with the existing string topology operations. In Section \ref{string-topology} we will show that our operations are are genuine extensions of the existing  operations to the homology of the entire moduli space of Riemann surfaces.

\setcounter{thm}{35} %Theorem 36
\begin{thm}
For a ribbon graph $\gamma\in G(n,m)$, let $\tilde{\gamma}$ denote the graph obtained by collapsing all the chord pieces to a point $\tilde{\gamma}=\gamma/\sim$. We obtain an operation $[\gamma]_\ast$ from Theorem \ref{main-thm}, and there is an induced string topology operation coming from $\tilde{\gamma}$. Then the following diagram commutes up to sign, where the vertical maps are isomorphisms:
\begin{equation*}
\xymatrix{  H_\ast(L_\ast(\Omega^\ast M)[n])
\ar[rr]^{[\gamma ]_\ast} \ar[d] && H_\ast(L_\ast(\Omega^\ast M)[m])  \ar[d] \\
 H^\ast(\mc L M)^{\otimes n}  \ar[rr]^{\text{operation from } \tilde{\gamma}} && H^\ast(
\mc L M)^{\otimes m} }
\end{equation*}
\end{thm}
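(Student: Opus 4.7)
The plan is to verify the commutativity edge by edge on the ribbon graph, exploiting the fact that the ``chord pieces'' of $\gamma$ are precisely those edges whose heat-kernel propagator, in the relevant limit, reduces to the diagonal distribution.

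First, I would pin down the vertical identifications. For a simply connected closed oriented manifold $M$, a suitable version of Jones's theorem gives a quasi-isomorphism $L_\ast(\Omega^\ast M) \simeq C^\ast(\mc L M)$ via iterated integrals, and the tensor power of this quasi-isomorphism is the vertical arrow in the diagram. The model $L_\ast(\A)[n]$ was engineered in the preceding sections so that this identification is compatible with the Hermitian-metric setup used to build $[\gamma]_\ast$; this compatibility is what makes the diagram well-posed.

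Next, I would unpack the definition of $[\gamma]_\ast$ coming from Theorem \ref{main-thm}. This operation decorates each internal edge of $\gamma$ by a heat-kernel propagator $K_t = e^{-tL}$ integrated against a length parameter, each external leg by an input Hochschild chain, and each vertex by the canonical contraction through the Calabi-Yau pairing. Chord pieces are distinguished in the construction as those edges along which no nontrivial propagation occurs; unwinding, these are precisely the edges treated as the $t\to 0$ limit, where $K_t$ converges as a distribution on $M\times M$ to the diagonal class $\delta_\Delta$. Thus $[\gamma]_\ast$ agrees, up to an explicit homotopy controlled by a standard small-$t$ estimate, with the operation assigned to $\tilde\gamma$ in which chord edges have been replaced by diagonal insertions.

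Finally, I would match this diagonal-collapsed operation to the string topology operation indexed by $\tilde\gamma$. The classical construction glues loops according to $\tilde\gamma$ and imposes coincidence at internal vertices via the intersection product, which under Poincaré duality is implemented by the diagonal class; tracing through the Cohen--Jones style iterated-integral description and comparing to the explicit formula for $[\gamma]_\ast$ in the limiting regime gives equality up to the orientation-sign ambiguities dictated by the ribbon structure. The main obstacle is precisely this last comparison: one must reconcile two chain-level models for Poincaré duality, namely the propagator model in which the diagonal appears as a $t\to 0$ limit of $K_t$, against the Pontryagin--Thom/tubular neighborhood model used in string topology. Verifying that the limit is homotopically well-behaved on the full Hochschild complex rather than just on harmonic representatives, and that the signs match on each combinatorial piece coming from the ribbon structure, is where the bulk of the work in the section resides.
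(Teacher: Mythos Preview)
Your outline captures the central mechanism correctly: the heat kernel on a chord edge, in the limit of zero length, degenerates to the diagonal current, and this diagonal is what implements the Umkehr map underlying the string topology operation. That is exactly the idea driving the paper's argument as well. One small correction: in the construction of $[\gamma]_\ast$ the chord edges carry honest positive lengths and genuine propagators; the passage to $t\to 0$ is not part of the definition but is introduced in the proof as a homotopy (a $1$-chain in $G(n,m)$ interpolating between lengths), after which one lands in currents rather than forms.

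Where your route diverges from the paper's is in the bookkeeping on both ends. For the vertical identifications you invoke Jones's theorem and iterated integrals directly; the paper instead builds bespoke bar-type models $(B\A)^{\otimes p}\otimes \A^{\otimes p}$ by cutting each boundary circle of $\gamma$ into arcs at the chord endpoints (this is Proposition~\ref{LA-quasi-isos}), and tracks the operation through those models via an explicit quasi-isomorphism $\rho$ that collapses the $B\A$ factors on chord intervals. For the comparison with string topology you propose to trace through a Cohen--Jones iterated-integral description; the paper instead follows F\'elix--Thomas and passes to a finite-dimensional Poincar\'e duality model $A$ of $\Omega^\ast M$ (Lambrechts--Stanley), where the diagonal/Umkehr map is modeled algebraically by the coproduct $\Delta:A\to A\otimes A$, and then uses functoriality of the bar construction to connect back to $\A$ and the heat-kernel coproduct. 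The paper also makes a preliminary reduction to chords that are star-shaped trees, which keeps the combinatorics of the multiplication/comultiplication pattern transparent.

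Both routes should work. The paper's approach has the advantage that the hard analytic comparison you flag---propagator model versus tubular-neighborhood model---is replaced by a purely algebraic statement about Poincar\'e duality models, with the analysis confined to the single observation that $a\mapsto \int_M a\wedge K(-,y,0)$ represents the cohomological coproduct. Your approach, if carried out, would stay closer to the geometry of the loop space but would require making the iterated-integral comparison precise at the chain level, which is the step you yourself identify as the main obstacle.
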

\setcounter{thm}{0}

In fact, adopting the language of lax algebras, Theorem \ref{main-thm} may be viewed as giving a chain level description of these string topology operations.  Recently, V. Godin \cite{Go} has given a independent construction of string topology operations at the homology level. 

Another example of a Calabi-Yau elliptic space arises from a vector bundle on a Calabi-Yau variety.  If the vector bundle is a strong generator of the derived category of the Calabi-Yau, then we conjecture that the closed TCFT which we construct explicitly gives the B model at all genera.  This material is explained in  section \ref{B-model}.

Here is a brief description of each chapter.  Chapter 2 recalls the definition of a  Calabi-Yau elliptic space. Such objects were originally considered by Kontsevich several years ago, and used by the first author in \cite{C3}.  These are particular differential graded algebras associated to a manifold with some extra structure.   We are particularly interested in two examples of such objects: the de Rham complex of a smooth manifold, and the elliptic space associated to a Calabi-Yau manifold with a vector bundle.  (At the moment, it is not clear to us what the meaning of the TCFT's constructed from Yang-Mills bundles and from G2 manifolds should be). 

Section 3 describes two moduli spaces of Ribbon graphs $G(n, m)$ and $\Gamma(n, m)$. Each of these moduli spaces is homotopy equivalent to certain moduli spaces of Riemann surfaces.  The moduli space $\Gamma(i, j)$ was used in \cite{C3} in order to construct an open TCFT.  The goal of this paper is to construct the corresponding closed version. The relationship between this closed theory to the open version should be viewed in the light of the results of \cite{C1}.

One of the problems in dealing with the family of moduli spaces $G(n, m)$ is that they do not , unlike their open counterparts $\Gamma(i, j)$, form a prop. In Section 4 we show how to use meta-graphs to overcome this problem. A meta-graph is a directed graph each of whose internal vertices is decorated by an appropriate element of $G(n, m)$, for suitable numbers $n$ and $m$.

In section 6 we talk about the notion of de Rham differential forms and de Rham currents on piecewise linear spaces. The notion of such differential forms on such spaces goes back to Whitney. Spaces $G(n, m)$, $\Gamma(i, j)$, and $\Delta_n$ are examples of piecewise linear spaces used in this paper.

In section 7 we modify the definition of the Hochschild complex in order to obtain a more suitable model for our construction.

Section 8 is the heart of the paper. It is here that the action is constructed. This action is in spirit similar to the action on the Hochschild complex of an associative algebra with nondegenerate, invariant inner product constructed in \cite{TZ}. Also, see Kaufmann \cite{Ka1, Ka2} for a version of an action on the Hochschild complex.

Section 9 is devoted to the example of the de Rham complex.  We show that after passing to the homology, our operations labelled by the zeroth classes, on the cohomology of the free loop space, coincide with the previously defined string topology operations. We do so in analogy to an argument identifying the string topology product with the Gerstenhaber cup product that was given by Felix and Thomas in \cite{FT}. 

Section 10 speculates on a relationship between the operations of this paper on the Hochschild complex of the Dolbeault resolution of a endomorphisms of a perfect complex and the B-model at all genera, when the complex in question is the the strong generator in the category of perfect complexes on an algebraic Calabi-Yau elliptic space.

\begin{ack}
We would like to thank Dennis Sullivan and Maxim Kontsevich for helpful conversations on this topic. We also would like to thank Gr\'egory Ginot for his useful comments on identifying the string topology operations. The second and third authors thank the Max Planck Institute for the support during their visits.
\end{ack}

\section{Calabi-Yau elliptic space}

Let us recall some basic definitions about Calabi-Yau elliptic spaces from \cite{C3}.

\begin{defn}
An elliptic space $(M,\mathcal A)$ consists of a smooth compact
manifold $M$ and a bundle of finite dimensional $\mathbb
Z_2$-graded associative algebras over $\mathbb C$ on $M$ whose
$\mathbb Z_2$-graded algebra of sections is denoted by $\mathcal
A$. Moreover, $\mathcal A$ is endowed with a first order
differential operator $Q : \mathcal A \to \mathcal A$, which is an
odd derivation of square zero making $\mathcal A$ into an elliptic
complex.
\end{defn}

\begin{defn}
Let $(M, \mathcal A)$ be an elliptic space. A Calabi-Yau structure,
CY for short, is a $\mathbb C$-linear trace map $\Tr : \mathcal A
\to \mathbb C$, satisfying $\Tr(ab)= \Tr(ba)$ and $\Tr(Qa)=0$.  It
assumed that the trace map factors through the densities, that is to
say $\Tr: \mathcal A \to \text{Densities(M)} \to \mathbb C$, where
the first arrow is $\cinfty(M,\C)$ linear. Here $\text{Densities(M)}$ is the vector space concentrated in degree zero of all sections of the top exterior algebra of the cotangent bundle twisted with the orientation bundle. The parity of this map
to the densities is denoted by $p(\mathcal A)$.
The pairing $\Tr(ab)$ on $\A$ is assumed to be non-degenerate, in
the sense that the induced map $\A \to \Hom_{\cinfty(M,\C)}(\A,
\dens(M))$ is an isomorphism.
\end{defn}

\begin{defn}
A Hermitian metric $\langle \cdot,\cdot \rangle: \mathcal A
\otimes \mathcal A \to \mathbb C$ on $\mathcal A$ is said to be
compatible with a CY elliptic space $(M, \mathcal A)$ if there
exists a complex antilinear, $C^\infty (M, \mathbb R)$-linear
operator $\ast: \mathcal A \to \mathcal A$,
satisfying $\ast\ast a =(-1)^{|a|( p(A)+1)}$, (where $|a|$
denotes the parity of $a \in \mathcal A$) such that 
$$\langle a, b\rangle=\Tr(a \cdot \ast
b).$$
\end{defn}
Let $Q^\dag$ be the Hermitian adjoint of $Q$, and let $H =
[Q,Q^\dag]$.  $H$ is an elliptic operator on $\A$, self adjoint with
respect to the Hermitian metric.

If $E,F$ are vector bundles on manifolds $M,N$, and $\mc E, \mc F$
are their spaces of global sections, we will use the notation $\mc E
\otimes \mc F$ to refer to the space of global sections of $E
\boxtimes F$ on $M \times N$.  In other words, $\otimes$ will refer
to the completed projective tensor product where appropriate.

The heat kernel $K(x, y, t) \in \mathcal A \otimes \mathcal A
\otimes \mathbb C^\infty( \R_{> 0} ) $ for a Hermitian CY elliptic space $(M,
\mathcal A)$ satisfies
$$e^{-tH} \alpha (x) = (-1)^{p(\A) \abs \alpha } \op{\Tr}_{y} K(x, y,
t)\alpha(y)$$ where $H= [Q, Q^\dagger]$.   Since $H$ is a
self-adjoint elliptic operator on a compact manifold, it admits a
heat kernel; see Gilkey \cite{G}.

\begin{ex} Let $M$ denote a compact and oriented Riemannian
manifold. It should be easy to see that $Q= \d$ makes $\mathcal A =
\Omega^\ast (M, \mathbb C)$ into an elliptic space. Moreover,
$\Tr(a) =\int_M a$ makes this into a CY elliptic space.  A metric on
$M$ induces a Hermitian metric on $\A$, such that $Q^\dag = \d^\ast$
and $H$ is the Laplacian on forms.
\end{ex}

\begin{ex}
Let $M$ denote a compact Calabi-Yau manifold of dimension $n$,
with a holomorphic volume form $Vol_M$. Let $E$ be a complex of
holomorphic vector bundles on $M$.  Let $\mathcal A= \Omega^{0, \ast}(M)
\otimes_{C^\infty(M)} \End(E)$, and $Q=\overline
\partial$.  Define $\Tr (\alpha)= \int_M \text{tr}(\alpha)\wedge Vol_M$.  

Choice of a Hermitian metric on $M$ and a Hermitian metric on the bundle $E$ leads to a Hermitian metric on the Calabi-Yau elliptic space $\mc A$. 
\end{ex}
Other examples are discussed in \cite{C3,DT}.

\section{Ribbon graphs}

In this section, we define the ribbon graph spaces $\Gamma(n,m)$ and $G(n,m)$ which are essential for our purposes. We furthermore cite a theorem from \cite{C3} which makes a given CY elliptic space $(M,\mc A)$ into an algebra over the prop $\Gamma(n,m)$, see Theorem \ref{Kevins-map}. For the sake of completeness, let us start by recalling the definition of a prop.

\begin{definition}
A prop is a (not necessarily unital) symmetric monoidal category whose monoid of objects is $\Z_{\ge 0}$.

A topological prop is such a symmetric monoidal category enriched over topological spaces. Thus, the morphisms $P(n,m)$ in a topological prop $P$ are topological spaces, and the structure maps are continuous maps.

A differential graded prop is such a symmetric monoidal category enriched over chain complexes. Thus, the morphisms $P(n,m)$ in a differential graded prop are chain complexes, and all structure maps are multi-linear maps compatible with the differentials.

If $\mc C, \mc D$ are dg symmetric monoidal categories, a lax symmetric monoidal functor $\mc C \to \mc D$ is a functor $F: \mc C \to \mc D$, with tensor product maps
$$
F(c_1) \otimes F(c_2) \to F(c_1 \otimes c_2)
$$
which are ``associative'' and ``commutative''.

A lax algebra over a dg prop $P$ is a dg symmetrical monoidal functor $F$ from $P$ to the dg category of chain complexes, such that the tensor product maps
$$
F( n ) \otimes F(m) \to F(n + m)
$$
are quasi-isomorphisms.
\end{definition}

Let us now recall the definition of the spaces $\Gamma(n,m)$ from \cite{C3}.

\begin{defn}
A graph is a compact one dimensional cell complex. A ribbon graph is
a graph with a cyclic order on the set of germs of edges emanating
from each vertex.

Let $\Gamma(n,m)$ be the space of metrised ribbon graphs with $n$
incoming and $m$ outgoing labeled external vertices. These external
vertices are uni-valent; all other vertices are at least tri-valent.
An edge adjoining an external vertex is called an external edge.

These graphs need not be connected.  We require that metrised ribbon
graphs have no loops of length zero, and no path of length zero
connecting two outgoing vertices. We allow the graph which has only a single edge, connecting two external vertices, as long as the edge is of non-zero length if the two vertices are outgoing.   Also we allow graphs with
connected components of this form.

The spaces $\Gamma(n,m)$ form a topological prop, or equivalently a
topological symmetric monoidal category whose objects are the
non-negative integers $\Z_{\ge 0}$.
\end{defn}

In section \ref{strat}, we give some definitions about spaces with piecewise-linear stratifications.   $\Gamma(n,m)$ has such a stratification, in a natural way; two graphs are in the same stratum if they are homeomorphic in a way respecting all the markings, and the ribbon structure, but not
necessarily the metric.    To any space $X$ with a PL stratification we define in section \ref{strat} a complex $\Omega^\ast(X)$ of piecewise smooth differential forms.  Let $\det$ be the local system on the spaces $\Gamma(n,m)$, defined in \cite{C3}. The fibre of $\det$ above a graph $\gamma \in \Gamma(n,m)$ is the determinant of the cohomology of $\gamma$
relative to the $m$ outgoing external vertices, with a shift in
grading determined by the Euler characteristic of these cohomology
groups.

\begin{thm}[\cite{C3}, section 4]\label{Kevins-map}
For every CY elliptic space $(M,\A)$, $\A$ is an algebra over the
prop $\Gamma$, in the following sense.  That is, there are maps
$$ K: \A^{\otimes n}\to \Omega^\ast(\Gamma(n,m), \det^{
p(\A)}) \otimes \A^{\otimes m}. $$ Gluing of graphs corresponds to
composition of morhpisms $\A^{\otimes n} \to \A^{\otimes m}$, and
disjoint union of graphs corresponds to tensor products of these
morphisms.
\end{thm}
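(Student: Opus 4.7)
The plan is to construct $K$ as a Feynman-style graph integral built from the heat kernel of the Laplacian $H = [Q, Q^\dag]$. On each open stratum of $\Gamma(n, m)$, labeled by a combinatorial ribbon graph $\gamma$ whose interior is parametrised by edge-length coordinates $(t_e) \in \R_{>0}^{|E(\gamma)|}$, I would assign to each internal edge of length $t_e$ a form-valued ``super-propagator''
$$P(t_e) = K(x_e, y_e, t_e) + (Q^\dag)_{x_e} K(x_e, y_e, t_e)\,dt_e \in (\A \otimes \A) \otimes \Omega^\ast(\R_{>0}),$$
to each $k$-valent internal vertex the $k$-ary operation given by iterated multiplication followed by the trace, taken in the cyclic order supplied by the ribbon structure, and to each of the $n$ incoming (resp.\ $m$ outgoing) external vertices an input (resp.\ output) slot. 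Contracting this tensor along edge-endpoints by means of the non-degenerate trace pairing produces a differential form on the open stratum with values in $\Hom(\A^{\otimes n}, \A^{\otimes m})$.

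Next I would verify that these local pieces assemble into a piecewise-smooth form on the whole PL space $\Gamma(n, m)$ with coefficients in the local system $\det^{p(\A)}$. Smoothness on each stratum is automatic from ellipticity of $H$ (see Gilkey \cite{G}), which makes the heat kernel smooth for $t > 0$. The determinant twist arises because the trace contraction at each internal vertex and the global ordering of edges are each defined only up to sign; a direct orientation analysis identifies the resulting $\Z/2$-ambiguity with the determinant of the graph cohomology relative to the outgoing external vertices, i.e.\ exactly the fibre of $\det$. Continuity across a codimension-one stratum where a single length $t_e \to 0$ follows from the heat-kernel semigroup identity
$$K(x, z, s + t) = \Tr_y\!\bigl(K(x, y, s) K(y, z, t)\bigr),$$
provided the collapsed configuration does not form a length-zero loop or a length-zero path between two outgoing external vertices --- and these are precisely the combinatorial restrictions built into the definition of $\Gamma(n, m)$.

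Compatibility with the prop structure then reduces to two checks. Disjoint union corresponds to tensor product tautologically, since the graph integral is multiplicative over connected components. For composition, given $\gamma_1 \in \Gamma(n, m)$ and $\gamma_2 \in \Gamma(m, \ell)$ with glued composite $\gamma = \gamma_2 \circ \gamma_1$, each pair of external edges that become identified in $\gamma$ yields a single internal edge whose propagator is the convolution of the two original propagators; by the semigroup identity above this convolution coincides with the propagator at the summed length, matching the propagator assigned by $\gamma$. Hence $K_\gamma = K_{\gamma_2} \circ K_{\gamma_1}$.

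The main obstacle is the combined analytic and combinatorial bookkeeping: one must simultaneously control the sign conventions arising from the $\det^{p(\A)}$ twist, verify that the heat-kernel integrals glue smoothly across every stratum of $\Gamma(n,m)$, and confirm agreement on overlapping PL charts. Analytic regularity is supplied by ellipticity of $H$, and the finiteness of the graph integrals at the boundary of moduli space is forced by the no-zero-loop and no-zero-outgoing-path conditions; the remaining sign book\-keeping is handled by a direct orientation comparison on the graph cohomology, which is essentially the content of the detailed arguments in \cite{C3}.
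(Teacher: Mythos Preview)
The paper does not supply its own proof of this statement: Theorem~\ref{Kevins-map} is quoted from \cite{C3}, section~4, and is used throughout the present paper only as a black box. So there is no argument here to compare your proposal against.

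That said, your sketch is an accurate outline of the construction actually carried out in \cite{C3}. The heat-kernel ``super-propagator'' on edges, the cyclic trace contraction at vertices, the orientation bookkeeping identifying the residual sign ambiguity with the fibre of $\det^{p(\A)}$, and the semigroup identity
\[
K(x,z,s+t)=\Tr_y\bigl(K(x,y,s)K(y,z,t)\bigr)
\]
controlling both the extension across codimension-one strata and the prop composition are exactly Costello's ingredients. One small internal inconsistency: you place the super-propagator only on \emph{internal} edges, but external edges of a graph in $\Gamma(n,m)$ are also metrised and must carry it as well (this is why the definition of $\Gamma(n,m)$ forbids length-zero paths between two \emph{outgoing} vertices: the kernel $K(x,y,0)$ is distributional). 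Without propagators on external edges your composition argument---two glued external edges becoming a single internal edge with propagator at the summed length---does not type-check. With that correction your outline matches \cite{C3}.
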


The main goal of this paper is to develop a closed string version of
the above action. To this end, we will introduce an appropriate
model for a prop of moduli space of Riemann surfaces with closed
boundaries.

\begin{defn} Let $G(n,m)$ be the moduli space of metrised ribbon graphs
$\gamma$, with $n+m$ numbered boundary components, each of which is
equipped with precisely one external edge.  These external edges are
of length zero.  The first $n$ boundary components are
incoming, the last $m$ outgoing. The outgoing boundary components
are disjoint embedded circles. Here, ``embedded circle" means
non-self intersecting; there is no edge in $\gamma$ both sides of
which are on an outgoing boundary component.

We require the following two conditions on the metric of a graph $\gamma$.
Frist, there are no loops of length zero. Next, after removing the output circles from the graph $\gamma$, we obtain a new graph, whose connected components we call the chords of $\gamma$. We then require, that the distance between any two endpoints of a chord has to have a positive length.
\end{defn}

Note that in particular this means that each connected component of
a graph in $G(n,m)$ has at least one incoming boundary component.

\section{Enlarging $G(n,m)$ to a prop $MG$}

The spaces $G(n,m)$ don't quite form a prop, as the compositions are only partially defined.  If $\Sigma_1 \in G(n,m)$, $\Sigma_2 \in G(m,r)$, then we can only glue $\Sigma_1$ to $\Sigma_2$ when all the outgoing boundary components of $\Sigma_1$ are of the same length as the incoming boundary components of $\Sigma_2$.  We will replace this partially-defined prop $G$ by a ``rectified'' version, which is an actual prop.  This rectified version will be called $MG$.

We will use throughout an explicit description of $MG$.  However, it is possible to give an abstract definition, as follows.
The prop $MG$ can be defined by saying that it is the universal prop with a map $\Phi : G(n,m) \to MG(n,m)$, such that
\begin{enumerate}
\item
if  $\Sigma_1 \in G(n,m)$, $\Sigma_2 \in G(m,r)$ are composable, $\Phi ( \Sigma_1 \circ \Sigma_2 ) = \Phi ( \Sigma_1 ) \circ \Phi( \Sigma_2 )$.
\item
if $\Sigma_i \in G(n_i,m_i)$, then $\Phi( \Sigma_1 \amalg \Sigma_2 ) = \Phi ( \Sigma_1 ) \amalg \Phi ( \Sigma_2 ) $.
 \end{enumerate}

Now we will give the concrete description.
\begin{defn}
A directed graph is a graph, with an orientation on each edge, such that
\begin{enumerate}
\item
Every vertex has at least one incoming edge.
\item
There are no oriented loops, that is, there is no loop in the graph
which always follows the orientation of the edges.
\end{enumerate}
A vertex $v$ in a directed graph $\gamma$ has level $l(v)=k$ if the
longest path, starting with an incoming edge and ending at $v$, is
of length $k$.

The condition that the graph contains no oriented loops implies that
every vertex has a level $l(v)$ with $1 \le l(v) < \infty$.
\end{defn}

\begin{defn}
A meta-graph is given by
\begin{enumerate}
\item
A connected, directed graph $\gamma$, as above.
\item
At each vertex $v$ of $\gamma$, an element of
$$RG_v \in  G(v_{in}, v_{out})$$ where $v_{in}, v_{out}$ refer to the set of incoming and outgoing edges at $v$.
\end{enumerate}
\end{defn}
Let $e \in E(\gamma)$ be an edge from $v_1$ to $v_2$.  The edge $e$
corresponds to an outgoing boundary component of the ribbon graph
$RG_{v_1}$, and an incoming boundary component of $RG_{v_2}$. Define
the \emph{discrepancy} along $e$ to be the difference in lengths
between these two boundary components.

\begin{defn}
Let $MG(n,m)$ denote the space of meta-graphs $\gamma$, with $n$
incoming and $m$ outgoing edges, modulo the following equivalence
relation.

Let $v_1,v_2$ be two edges of $\gamma$, and suppose that the longest
directed path of edges, starting at $v_1$ and end at $v_2$, is of
length one.   Let $e_1,\ldots,e_k$ be the edges of $\gamma$ which
start at $v_1$ and ending at $v_2$.    Suppose that each of the
$e_i$ has zero discrepancy.  Then, we can form a new meta-graph
$\gamma/ (e_1, \ldots, e_k ) $, by gluing $RG_{v_1}$ to $RG_{v_2}$.
In this situation, we identify
$$
\gamma = \gamma / (e_1,\ldots,e_k ) .
$$
\end{defn}
\begin{remk}
Note that we  only perform this identification when \emph{all} edges
connecting $v_1$ to $v_2$ have zero discrepancy.  Otherwise, the
result would not be a meta-graph; there would be oriented loops.
Similarly, we can only perform this identification when the longest
directed path from $v_1$ to $v_2$ is of length one, as otherwise we
would introduce oriented loops.
\end{remk}

Note that there is a natural inclusion $G(n,m) \to MG(n,m)$ which maps to the the trivial meta-graph.
\begin{prop}
$MG(n,m)$ deformation retracts onto $G(n,m)$.
\label{theorem deformation retract}
\end{prop}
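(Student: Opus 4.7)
The plan is to construct an explicit deformation retract $H : MG(n,m) \times [0,1] \to MG(n,m)$ with $H_0 = \operatorname{id}$, $H_1(MG(n,m)) \subseteq G(n,m)$, and $H_t|_{G(n,m)} = \operatorname{id}$ for all $t$. The guiding observation is that the obstruction to a meta-graph $\gamma$ being identified (via the equivalence relation of $MG$) with a single-vertex meta-graph, i.e.\ an element of $G(n,m)$, is exactly the collection of discrepancies $d(e) = \ell_1(e) - \ell_2(e)$ attached to the directed edges $e$ of $\gamma$. Accordingly the retract should flow the metrics on the decorating ribbon graphs $RG_v$ so as to drive every discrepancy linearly to zero.

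I would carry out the construction in stages indexed by the level function $l(v)$. At level $1$ nothing is changed. Having already adjusted every $RG_v$ with $l(v)\le k$, I would, for each vertex $w$ of level $k+1$ and each incoming edge $e$ of $w$, continuously deform the corresponding incoming boundary length of $RG_w$ from $\ell_2(e)$ at $t=0$ to the already-adjusted value $\ell_1(e)$ of the outgoing boundary on the other side of $e$ at $t=1$, via the linear interpolation $(1-t)\ell_2(e)+t\ell_1(e)$. To realise such an adjustment of one boundary of $RG_w \in G(w_{in},w_{out})$ without disturbing the other boundaries, I would localise the deformation near the external vertex associated with $e$: because the admissibility conditions defining $G$ (no zero-length loops, positive distances between chord endpoints) are open, small rescalings of the interior edges immediately adjacent to a given external vertex can be arranged so that they alter only the chosen boundary component, up to first order, with compensating infinitesimal rescalings on any edge shared with a neighbouring boundary.

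With this layer-by-layer rescaling complete, at $t=1$ all discrepancies vanish, so the $MG$ equivalence relation lets us inductively glue adjacent vertices, producing a single-vertex meta-graph, i.e.\ an element of $G(n,m)$. The restriction of $H$ to $G(n,m) \subset MG(n,m)$ is the identity, since a one-vertex meta-graph carries no edges and hence no discrepancies to interpolate. Continuity of $H$ is checked stratum by stratum using the PL structure of $MG(n,m)$: within each stratum the rescaling is given by an explicit linear formula in the edge-length coordinates, and on stratum boundaries the formulas agree by construction.

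I expect the main obstacle to be the single-boundary rescaling step, because an interior edge of $RG_v$ can simultaneously lie on several boundary components, so a naive uniform rescaling of the edges bordering one component will in general perturb the others. This is resolved by the local buffer construction sketched above, exploiting the external edges as handles and the openness of the admissibility conditions on $G$; a global consistency of the deformation across all vertices then follows because the level-ordering ensures that boundaries are adjusted in a directed, acyclic order, removing the circularity that would otherwise arise from closed undirected loops in the underlying meta-graph. A secondary subtlety is verifying that no forbidden configurations (zero-length loops, coincident chord endpoints) appear during the homotopy, which follows because the deformation only changes lengths by strictly positive rescalings applied to admissible metrics, and hence stays inside $MG(n,m)$ for all $t\in[0,1]$.
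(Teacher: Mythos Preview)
Your overall strategy --- drive all discrepancies to zero by rescaling boundary lengths level by level --- is the same as the paper's, but the implementation has a real gap at the ``single-boundary rescaling'' step, and the paper's fix is not the local buffer you sketch.

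The difficulty you identify is exactly the crux: an interior edge of $RG_v$ can lie on several boundary components, so you cannot in general change the length of one incoming boundary without perturbing others. Your proposed ``local buffer construction'' does not resolve this. The phrase ``up to first order'' already signals that it is not an exact adjustment, and for a deformation retract you need an honest continuous family of points in $G(v_{in},v_{out})$, not an approximate one. Moreover, near an incoming external vertex the adjacent interior edges typically lie on two different boundary components (one on each side), so rescaling them changes both; there is no evident way to compensate, and nothing in the definition of $G$ gives you the slack you need. This is a genuine obstruction, not a technicality.

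The paper's key observation is that the \emph{outgoing} boundaries of a graph in $G(v_{in},v_{out})$ are, by definition, disjoint embedded circles: no edge of $RG_v$ has both sides on an outgoing boundary, and distinct outgoing boundaries share no edges. Hence one can uniformly rescale any single outgoing boundary circle to an arbitrary positive length while leaving every other edge of $RG_v$ untouched. Accordingly, the paper rescales the \emph{outgoing} boundary $B_1$ at the tail $v_1$ of each edge $e$ to match the incoming length at the head $v_2$, processing edges from the highest level downward (so that the target length $l_2(t)$ is already determined when you reach $e$). Your level-increasing scheme, which instead adjusts incoming boundaries, forfeits exactly this structural feature.

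A second omission: you do not check that the homotopy is well defined on $MG(n,m)$, i.e.\ that it respects the equivalence relation identifying $\gamma$ with $\gamma/(e_1,\dots,e_k)$ when all edges from $v_1$ to $v_2$ have zero discrepancy. This is nontrivial --- the paper devotes a separate argument to it, using the specific form of the rescaling (bounded above and below by $e^{\pm t}$) to show that zero-discrepancy edges remain zero-discrepancy throughout the flow. Any correct proof must address this.
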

\begin{proof}
We will define a map
$$\Phi : MG(n,m) \times [0,\infty] \to MG(n,m)$$ such that $\Phi(\gamma,\infty) \in G(n,m)$ for
all $\gamma \in MG(n,m)$,  $\Phi(\gamma,0) = \gamma$ for all $\gamma \in MG(n,m)$, and  $\Phi(\gamma,t)=  \gamma$ if $\gamma
\in G(n,m)$, for all $t \in [0,\infty]$.

Let $\gamma \in MG(n,m)$.  Suppose that the largest level of any vertex $v \in V(\gamma)$ is $k$.

Let $e$ be an edge of $\gamma \in MG(n,m)$, from $v_1$ to $v_2$,
such that $v_2$ is of level $k$.   Let $B_1, B_2$ denote  the
boundary components of $RG_{v_1}, RG_{v_2}$ corresponding to $e$,
and let $l_1,l_2$ denote the lengths of $B_1,B_2$.

Recall that $B_1$ is an embedded circle on $RG_{v_1}$, with no self
intersections.  Thus, we can get a new metrised ribbon graph by
uniformly scaling the length of $B_1$ to any desired positive
length, while leaving the lengths of all other edges of $RG_{v_1}$
unchanged.

If $l_1 \le l_2$, then in $\Phi(\gamma,t)$ we change the length of $B_1$ to
$$
l_1(t) = \op{min} ( e^t l_1 , l_2 ) .
$$
If $l_1 \ge l_2$,  we change the length of $B_1$ to
$$
l_1(t) = \op{max} ( e^{-t} l_1 , l_2 ) .
$$

Repeat this procedure for all edges $e$ of $\gamma$ which end at a vertex of level $k$.

After we do this, all ribbon graphs $RG_v$ attached to vertices of level $\le k-1$ are $t$-dependent.

Next, let $e$ be an edge which ends on a vertex $v_2$ of level
$k-1$, and starts at a vertex $v_1$.   As before, we have boundary
components $B_1,B_2$ of $RG_{v_1}, RG_{v_2}$ respectively.    Recall
$RG_{v_2}$ has some $t$-dependence.  Thus, the length $l_2(t)$ of
$B_2$ is $t$-dependent.  The length $l_1$ of $B_1$ is independent of
$t$, because the embedded boundary $B_1$ in $RG_{v_1}$ is disjoint
from any boundary circle whose length we have already changed.

Now let's repeat the procedure as above, but using the $t$-dependent
length $l_2(t)$.  That is, If $l_1 \le l_2(t)$, then we change the
length of $B_1$ to
$$
l_1(t) = \op{min} ( e^t l_1 , l_2 (t)) .
$$
If $l_1 \ge l_2(t)$,  we change the length of $B_1$ to
$$
l_1(t) = \op{max} ( e^{-t} l_1 , l_2 (t) ) .
$$
Let's repeat this for all edges $e$ which end on vertices of level
$k-1$, and then for all edges which end on vertices of level $k-2$,
and so on.

We need to check that this procedure is well-defined.  That is, let
$\gamma_1,\gamma_2$ be meta-graphs which are equivalent under the
equivalence relation defining $MG(n,m)$.  We need to check that
$\Phi(\gamma_1,t)$ and $\Phi(\gamma_2,t)$ are equivalent.

Suppose that $\gamma_2$ is obtained from $\gamma_1$ by contracting
all the edges going from a vertex $v_1$ to a vertex $v_2$, where the
discrepancy along each edge is zero.  Let $e$ be such an edge.  Note
that the level of $v_2$ is greater than that of $v_1$.

As before, let $B_1, B_2$ be the boundary components of $RG_{v_1},
RG_{v_2}$ corresponding to $e$, and let $l_i(t)$ be their
$t$-dependent lengths,   At $t = 0$, $l_1(0) = l_2(0)$, because
there is no discrepancy along $e$.  We need to check that $l_1(t) =
l_2(t)$ for all $t$.  This will imply that $\Phi(\gamma_1,t)$ and $\Phi(\gamma_2,t)$ are equivalent.

Recall that every outgoing boundary circle of $RG_{v_2}$ has been
rescaled up or down by at most $e^t$. The edges of $RG_{v_2}$ which
are not on a boundary circle have not been rescaled.  Thus, every
edge of $RG_{v_2}$ has been rescaled up or down by at most $e^t$; it
follows that
$$e^{-t} l_2(0) \le l_2(t) \le e^t l_2(0).$$

Since $l_2(0) = l_1(0)$, it follows that $e^{-t} l_1(0) \le l_2(t)
\le e^t l_1(0)$.  By definition, if $l_1(0) \ge l_2(t)$ then $l_1(t)
= \op{max}(e^{-t} l_1(0), l_2(t) ) = l_2(t)$, and similarly if
$l_1(0) \le l_2(t)$.  Thus, $l_1(t) = l_2(t)$ for all $t$.

Thus, our map $\Phi : MG(n,m) \times [0,\infty] \to MG(n,m)$ is
well-defined and continuous.  At value $\infty$ it lands in
$G(n,m)$, and at $0$ it is the identity.  Further, if $\gamma \in
G(n,m)$, then $\Phi(\gamma,t)  = \gamma$ for all $t \in [0,\infty]$.
Thus, $\Phi$ is a deformation retraction, as required.
\end{proof}

It is easy to see that the spaces $MG(n,m)$ form a prop.  The maps
$MG(n,m) \times MG(m,r) \to MG(n,r)$ are given by gluing directed
graphs to each other in the evident way.    This map respects the
equivalence relation defining the spaces $MG(n,m)$.  Let $MG$ denote
this prop.

\section{Moduli spaces of Riemann surfaces}

We now show, that the prop $MG$ is a model for Segal's prop $\mc S$ of moduli spaces.
\begin{defn}
Let $\mathcal S(n, m)$ denote the infinite dimensional space of all
isomorphism classes of Riemann surfaces with $n$ incoming and $m$ outgoing boundary components.  These boundary components are given an analytic parametrisation.  We require that every connected component of each surface in $\mc S(n,m)$ has at least one
incoming boundary. The composition of morphisms in
this prop is given by sewing the Riemann surfaces along the
boundaries (this is well-defined because the parametrisations are analytic). The tensor product of morphisms is the disjoint union.
\end{defn}

\begin{prop}\label{MG=S}
The prop $MG$ is weakly equivalent to the prop $\mc S$.
\end{prop}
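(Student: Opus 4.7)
The strategy is to reduce, via Proposition~\ref{theorem deformation retract}, to establishing a weak equivalence $G(n,m) \simeq \mc S(n,m)$ on each $(n,m)$-component, and then to promote this to a weak equivalence of props by constructing a genuine prop morphism $\widetilde\Psi : MG \to \mc S$ that is compatible with the deformation retraction onto $G$.

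First I would define the underlying map $\Psi : G(n,m) \to \mc S(n,m)$ by thickening. A metrised ribbon graph $\gamma$ thickens in the standard way to a topological surface with boundary on which the edge-lengths determine a flat Riemannian structure with conical singularities at the internal vertices; taking the conformal class yields a complex structure. The $m$ embedded outgoing boundary circles acquire arclength analytic parametrisations; the $n$ incoming length-zero external edges correspond to cusp-like marked points, which I would complete to small analytically parametrised boundary circles using the local flat coordinates around each incoming external vertex. To upgrade to a prop morphism $\widetilde\Psi : MG \to \mc S$, apply $\Psi$ to each ribbon graph $RG_v$ at the vertices of a meta-graph and then glue the resulting Riemann surfaces along each edge $e$, inserting a cylinder whose length equals the discrepancy along $e$. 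The equivalence relation defining $MG(n,m)$ collapses precisely the meta-graphs glued along zero-discrepancy edges, where the cylinders have zero length and the glued surface is unambiguously defined, so $\widetilde\Psi$ descends to the quotient; the compatibility with composition and disjoint union is manifest from the construction.

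The main step, and the principal obstacle, is to show that $\Psi$ is a weak equivalence on each $(n,m)$-component (which by Proposition~\ref{theorem deformation retract} will imply the same for $\widetilde\Psi$). For this I would invoke a Jenkins--Strebel-type theorem, following the model of the corresponding open-case result for $\Gamma(n,m)$ in \cite{C3}: given $\Sigma \in \mc S(n,m)$ there is an essentially unique meromorphic quadratic differential whose horizontal foliation has closed leaves along each outgoing parametrised boundary and has simple poles at the (degenerated) incoming boundaries, and whose critical graph, metrised by the associated flat length, recovers an element of $G(n,m)$. This produces a homotopy inverse to $\Psi$. The technical content lies in verifying that the non-degeneracy conditions baked into the definition of $G(n,m)$ --- absence of zero-length loops, positivity of chord distances, and embeddedness of outgoing circles --- agree with the generic stratum of metrised ribbon graphs arising from Strebel differentials, and that the non-generic loci excluded on the $G$-side are of positive codimension so that the homotopy types agree; one must also check that the infinite-dimensional space of analytic parametrisations on the $\mc S$-side contributes contractible fibres, so that this ``rigidification'' does not affect the homotopy type. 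Once these are in place, $\widetilde\Psi$ is a weak equivalence of props, proving the proposition.
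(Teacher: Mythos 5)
Your proposal follows essentially the same strategy as the paper's sketch: build a map of props from (a variant of) $MG$ to $\mc S$ by realizing each ribbon graph as a surface, interpose annular pieces along the interior edges of the meta-graph, and then argue the map is a weak equivalence by reducing via the deformation retraction $MG(n,m)\simeq G(n,m)$ to the familiar ribbon-graph/Riemann-surface correspondence. The differences are worth noting, as a few of your phrasings are geometrically imprecise in a way the paper's construction avoids.

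First, the paper does not work with all of $MG$ directly; it restricts to the sub-prop $MG'\subset MG$ in which the lengths of the incoming and outgoing boundary cycles are normalized (to $2$ and $1$, respectively), and checks $MG(n,m)$ retracts onto $MG'(n,m)$. This normalization is what allows it to specify once and for all which annuli to glue in at the external edges of a meta-graph, and hence to obtain well-defined analytic parametrisations of all $n+m$ boundary circles: the incoming and outgoing external edges are replaced by the fixed annuli $\{1.5\le|z|\le 2\}$ and $\{1\le|z|\le 1.5\}$, and each interior edge with boundary lengths $l_1,l_2$ is replaced by $\{l_1\le|z|\le l_2\}$. Your version instead wants to work with arbitrary boundary lengths and ``insert a cylinder whose length equals the discrepancy along $e$.'' This formulation is problematic: the discrepancy is the difference of the two boundary circumferences, and a flat cylinder has constant circumference, so it cannot bridge circles of unequal length, and in any case a ``negative-length cylinder'' does not make sense when the discrepancy has the wrong sign. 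What you actually need is a conformal annulus, exactly as in the paper's $\{l_1\le|z|\le l_2\}$ prescription, whose modulus tends to zero as the discrepancy does; the specific functional form does not matter for the weak-equivalence statement, but the object is an annulus/truncated cone, not a flat cylinder, and ``length'' should mean ``modulus.''

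Second, your treatment of the incoming boundary components as ``cusp-like marked points'' is off. In $G(n,m)$ all $n+m$ boundary cycles carry a length-zero external edge, and the thickening of a metrised ribbon graph is a surface \emph{with boundary}, whose $n+m$ boundary circles are all honest circles with arclength parametrisations (basepointed by the external edges); the incoming ones are not cusps or punctures needing to be blown up. The paper's annulus construction is precisely the way it makes the resulting parametrisations analytic, rather than merely piecewise-smooth. Finally, both arguments ultimately lean on the classical Strebel/Jenkins--Strebel identification of metrised-ribbon-graph moduli with moduli of Riemann surfaces; the paper cites it implicitly by asserting that the composed map $G(n,m)\to\mc S(n,m)$ is homotopic to the usual one, whereas you invoke it more explicitly. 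Your acknowledgment of the remaining verifications (codimension of excluded strata, contractibility of the parametrisation fibres) is appropriate and in the same spirit as the paper's sketch.
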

\begin{proof}[Proof (sketch)]

First, we will define a sub-prop of $MG$ which is weakly equivalent to $MG$.  Let $\gamma \in MG(n,m)$ be a meta-graph. Each of the $n$ incoming edges of $\gamma$ is attached to a vertex, and corresponds to an incoming boundary component on that vertex. Let $l^{in}_1,\ldots,l^{in}_n$ denote the lengths of these incoming boundary components. In a similar way,let $l^{out}_1,\ldots,l^{out}_m$ denote the lengths of the outgoing boundary components.  We will define $MG'(n,m)$ to be the subspace where $l_i^{out} = 1$, for all $i$, and $l_j^{in} = 2$, for all $j$.  It is clear that $MG'(n,m)$ is a sub-prop, and it is not difficult to see that the $MG(n,m)$ deformation retracts onto $MG'(n,m)$.

Now, we will define a map $MG' (n,m) \to \mc S(n,m)$ which is a weak equivalence of props.

Let $\gamma \in MG'(n,m)$.  Let $e$ be an interior edge of $\gamma$, from $v_1$ to $v_2$.  Recall each vertex $v$ of $\gamma$ is labeled by a ribbon graph $RG_{v} \in G(v_{in}, v_{out} )$.  Let $l_1$ denote the length of the outgoing boundary component of $RG_{v_1}$ corresponding to $e$, and let $l_2$ denote the length of the incoming boundary component of $RG_{v_2}$ corresponding to $e$.

 We will construct a surface $\Sigma(\gamma)$ as follows.
\begin{enumerate}
\item
Replace each edge interior $e$ of $\gamma$ by the annulus
$$
\{z \in \C \mid l_1 \le \abs{z}\le l_2 \}
$$
with the flat metric induced from $\C$.
\item
Replace each of the $n$ incoming edges of $\gamma$ by the annulus
$$
\{z \in \C \mid 1.5 \le \abs{z}\le 2 \}.
$$
\item
Replace the $m$ outgoing edges of $\gamma$ by the annulus
$$
\{z \in \C \mid 1 \le \abs{z}\le 1.5 \}.
$$
\item
If an interior edge $e$ starts at $v_1$ and ends at $v_2$, glue the metrised ribbon graphs $RG_{v_1}$, $RG_{v_2}$ onto $e$, using the parametrisation on the boundaries of  $RG_{v_1}$ and $RG_{v_2}$. Note the outgoing boundary components of a ribbon graph are parametrised in the opposite sense to the incoming boundaries.  Similarly, glue the incoming and outgoing edges to the ribbon graphs at the corresponding vertices.
\end{enumerate}
Then, $\Sigma(\gamma)$ is a topological surface with a piecewise flat (but singular) metric.  The conformal structure attached to this metric extends across the singularities in a natural way.  The boundaries of $\Sigma(\gamma)$ have a natural an analytic parametrisation, and thus $\Sigma(\gamma) \in \mc S(n,m)$.

The map $MG'(n,m) \to \mc S(n,m)$ is easily seen to be compatible with composition.
The fact that it's a weak equivalence comes from the fact that $MG'(n,m) \simeq MG(n,m) \simeq G(n,m)$, and the corresponding map in the homotopy category  $G(n,m) \to \mc S(n,m)$ is homotopic to the usual way of associating a Riemann surface to a ribbon graph.
\end{proof}

\section{Definitions about piecewise linear spaces}
\label{strat}

We would like to construct a chain model of the prop $MG$, by applying a chain functor with certain properties.   The chain functor we need will be defined on a certain category of spaces with PL stratifications.   In this section we will describe this category, and then construct the desired chain complex functor. 

\subsection{Piecewise linear stratifications}

\begin{defn}
Let $X$ be a locally compact Hausdorff topological space.  A \emph{piecewise linear stratification} $S$ of $X$ is a
decomposition of $X$ into a locally finite collection of disjoint
connected pieces $X_\alpha$, called the strata of $X$.  The closure $\br X_\alpha$ is required to be a disjoint union of strata of $X$.

Each $\br X_\alpha$ is a subset of some $\R^{n_\alpha}$ cut out by a finite number of linear inequalities (some of the inequalities might be strict, others may be non-strict).  The region where all of the inequalities are strict is $X_\alpha$.

If $X_\beta$ is a stratum with $X_\beta \subset \br X_\alpha$, then the map $\br X_\beta \to \br X_\alpha$ is required to be affine linear.  That is, it is the restriction of an affine linear map $\R^{n_\beta} \to \R^{n_\alpha}$.

Let $(X,S)$, $(Y,T)$ be two spaces with piecewise linear stratifications.  A continuous map  $f :X \to Y$ is said to be compatible with $S$ and $T$ if  every stratum $X_\alpha$ is mapped into some stratum $Y_\beta$, and the map $f \vert_{X_\alpha} : X _\alpha \to Y_\beta$ is affine linear.

A refinement of a PL stratification $S$ of $X$ is a PL stratification $S'$ such that the identity map $(X,S') \to (X,S)$ is compatible with the stratifications.

A \emph{piecewise linear structure} on $X$ is an equivalence class of piecewise linear stratifications, where two PL stratifications $S,S'$ are equivalent if they admit a common refinement.  A topological space with a PL structure will be called a PL space.
\end{defn}

\begin{lem}
Let $(X,S)$, $(Y,T)$ be spaces with PL stratifications.  Let $T'$ be a refinement of $T$.  Then, for any map $f : (X,S) \to (Y,T)$, there exists some refinement $S'$ of $S$ such that $f$ is compatible with the stratifications $S',T'$, that is, gives a map $f : (X,S' ) \to (Y,T')$.
\end{lem}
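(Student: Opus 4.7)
The plan is to construct $S'$ by pulling back the refinement $T'$ of $T$ along $f$, working one stratum of $S$ at a time. For each stratum $X_\alpha$ of $S$, compatibility of $f$ with $S,T$ gives a single stratum $Y_\beta$ of $T$ with $f(X_\alpha) \subset Y_\beta$ and with $f|_{X_\alpha} : X_\alpha \to Y_\beta$ the restriction of an affine linear map $L_\alpha : \R^{n_\alpha} \to \R^{n_\beta}$. Since $T'$ refines $T$, the closed stratum $\br Y_\beta$ is cut by finitely many additional linear (in)equalities into the refined pieces $Y'_{\beta,\gamma}$ of $T'$. Pulling these (in)equalities back by $L_\alpha$ gives finitely many linear (in)equalities on $\R^{n_\alpha}$, which subdivide $\br X_\alpha$ into a finite collection of locally closed linear pieces; I take these pieces (intersected with $X_\alpha$) as the strata of $S'$ lying in $X_\alpha$.

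Next, I would check that the collection obtained by doing this for every $X_\alpha$ assembles into a bona fide PL stratification of $X$. Local finiteness of $S'$ follows from local finiteness of both $S$ and $T'$ (only finitely many refined $Y'_{\beta,\gamma}$ meet a compact neighborhood in $Y$, and only finitely many $X_\alpha$ meet a compact neighborhood in $X$). The crucial closure property, namely that $\br X'_\delta$ is a union of strata of $S'$, reduces to checking that the subdivisions pulled back on $X_\alpha$ and on a boundary stratum $X_{\alpha'} \subset \br X_\alpha$ agree. For this, note that the inclusion $\br X_{\alpha'} \hookrightarrow \br X_\alpha$ is affine linear, and similarly $\br Y_{\beta'} \hookrightarrow \br Y_\beta$ is affine linear, where $Y_{\beta'}$ is the stratum of $T$ containing $f(X_{\alpha'})$. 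Since $T'$ is itself a PL stratification, its restriction to $\br Y_{\beta'}$ equals its inherited subdivision as a subset of $\br Y_\beta$. Pulling back either description along $L_\alpha$ (and restricting to $\br X_{\alpha'}$) gives the same (in)equalities as pulling back along $L_{\alpha'}$, so the two subdivisions match.

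Finally, I would verify the two compatibilities asserted by the lemma. The identity map $(X,S') \to (X,S)$ is compatible because each stratum $X'_\delta$ of $S'$ lies in a single stratum $X_\alpha$ with inclusion the restriction of a linear inclusion $\R^{n'_\delta} \to \R^{n_\alpha}$, so $S'$ is a refinement of $S$. The map $f : (X,S') \to (Y,T')$ is compatible because, by construction, the subdivision of $X_\alpha$ was obtained precisely as the pull back of the subdivision of $Y_\beta$; hence each stratum of $S'$ inside $X_\alpha$ maps affine linearly into a single stratum of $T'$ inside $Y_\beta$.

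The main obstacle I anticipate is the closure/boundary coherence across different source strata in step two: the construction is local to each $X_\alpha$, and one must verify that subdivisions obtained independently on $X_\alpha$ and on its boundary strata $X_{\alpha'}$ glue to give a stratification in which closures are unions of strata. Everything else is a routine check, and this one hinges on the affine linearity of both the inclusions $\br X_{\alpha'} \to \br X_\alpha$ and $\br Y_{\beta'} \to \br Y_\beta$ together with the fact that $T'$ is already a PL stratification refining $T$.
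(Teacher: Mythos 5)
The paper states this lemma without proof, so there is no argument to compare against; your proposal must be assessed on its own merits. Your pullback construction is correct and is the natural argument. The one genuinely delicate point is, as you note, the coherence of the subdivisions across boundary strata; your resolution of it is right: for $X_{\alpha'} \subset \br X_\alpha$, continuity of $f$ forces the stratum $Y_{\beta'}$ of $T$ containing $f(X_{\alpha'})$ to lie in $\br Y_\beta$, and then the closure condition in the definition of a PL stratification of $T'$ (that $\br Y'_\gamma$ is a union of $T'$-strata) is exactly what guarantees that the pulled-back subdivisions of $X_\alpha$ and $X_{\alpha'}$ fit together. Two small points you may wish to make explicit. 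First, the candidate strata $X_\alpha \cap f^{-1}(Y'_\gamma)$ are automatically connected, because $X_\alpha$ is the interior of a polyhedron (convex) and the image of $Y'_\gamma$ in $\br Y_\beta$ is the affine image of the interior of a polyhedron (also convex), and the affine preimage of a convex set intersected with a convex set is convex; so no further passage to components is needed. Second, for local finiteness one should argue through a compact neighborhood $K$ of a point of $X$: only finitely many $X_\alpha$ meet $K$ (local finiteness of $S$), $f(K)$ is compact, so only finitely many $Y'_\gamma$ meet $f(K)$ (local finiteness of $T'$), hence only finitely many strata $X_\alpha \cap f^{-1}(Y'_\gamma)$ of $S'$ meet $K$ — this is what your parenthetical sketches, and it is sound.
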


\begin{definition}
If $X,Y$ are PL spaces, a PL map $f : X \to Y$ is a continuous map such that there exists stratifications $S$, $T$ of $X$ and $Y$, in the given equivalence classes, such that $f$ is compatible with $S$ and $T$.
\end{definition}
\begin{lemma}
If $f : X \to Y$ is a PL map, and $T$ is a stratification of $Y$, then there exists a stratification $S$ of $X$ such that $f : (X,S) \to (Y,T)$ is compatible with the stratifications.  
\end{lemma}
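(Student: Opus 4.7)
The plan is to reduce to the preceding lemma, which handles the case where one already has a compatible pair of stratifications and only wishes to refine the target. By the definition of a PL map, I may choose representatives $S_0$ of the PL structure on $X$ and $T_0$ of the PL structure on $Y$ with the property that $f : (X, S_0) \to (Y, T_0)$ is compatible with $S_0$ and $T_0$. Since both the given $T$ and the chosen $T_0$ belong to the same PL equivalence class on $Y$, they admit a common refinement $T_1$.

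Next, I would apply the preceding lemma to the compatible map $f : (X, S_0) \to (Y, T_0)$ and the refinement $T_1$ of $T_0$ to produce a refinement $S_1$ of $S_0$ for which $f : (X, S_1) \to (Y, T_1)$ is compatible. Because $S_1$ refines $S_0$, it represents the same PL structure on $X$, so $S_1$ is an admissible candidate for the stratification $S$ we are looking for.

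Finally, because $T_1$ is a refinement of $T$, each stratum of $T_1$ sits inside the closure of a unique stratum of $T$ by an affine linear inclusion; in other words, the identity map $(Y, T_1) \to (Y, T)$ is itself compatible with the stratifications. Compatibility of stratified maps is preserved under composition, since the composite of the restrictions of two affine linear maps is the restriction of an affine linear map. Composing $f : (X, S_1) \to (Y, T_1)$ with the identity $(Y, T_1) \to (Y, T)$ therefore yields a compatible map $f : (X, S_1) \to (Y, T)$, and taking $S := S_1$ completes the argument.

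There is no substantive obstacle here: all of the real content has been isolated in the preceding lemma, and what remains is simply unwinding definitions. The one point worth verifying cleanly during the write-up is the closure of compatibility under composition, which follows at once from the fact that a composition of restrictions of affine linear maps is again the restriction of an affine linear map, together with the observation that inclusions arising from a refinement are affine linear by definition.
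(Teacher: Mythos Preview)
Your proposal is correct and follows essentially the same argument as the paper: pick a compatible pair of stratifications from the definition of a PL map, take a common refinement on the target with the given $T$, apply the preceding lemma to pull this refinement back to $X$, and then compose with the identity $(Y,T_1)\to(Y,T)$. The only differences are notational and the extra remarks you include about closure under composition, which the paper leaves implicit.
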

\begin{proof}
By definition, there exists some stratification $T'$ of $Y$ and a stratification $S$ of $X$ such that $f : (X,S ) \to (Y,T')$ is compatible with the stratifications.  Let $T''$ be a common refinement of $T'$ and $T$.  The lemma asserts the existence of a stratification $S'$ of $X$ such that $f : (X,S') \to (Y,T'')$ is compatible with the stratifications. Since the identity map $(Y,T'') \to (Y,T)$ is compatible with the stratifications, so is the map $(X,S') \to (Y,T)$. 
\end{proof} 
\begin{lemma}
The composition of two PL maps is PL. 
\end{lemma}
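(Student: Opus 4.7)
The plan is to use the preceding lemma as the essential input, since the only obstruction to composing PL maps is that the witnessing stratifications on the middle space $Y$ might disagree, and that lemma precisely lets us pull back a refinement of the target stratification.

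Concretely, let $f : X \to Y$ and $g : Y \to Z$ be PL maps. First I would unpack the definitions: by PL-ness of $f$ there exist stratifications $S_0$ of $X$ and $T_0$ of $Y$, in the prescribed equivalence classes, with $f : (X, S_0) \to (Y, T_0)$ compatible; similarly for $g$ there exist stratifications $T_1$ of $Y$ and $U_0$ of $Z$, with $g : (Y, T_1) \to (Z, U_0)$ compatible. Since $T_0$ and $T_1$ lie in the same PL equivalence class on $Y$, they admit a common refinement $T$. The identity map $(Y,T) \to (Y,T_1)$ is compatible with stratifications, so the composition $g : (Y,T) \to (Z,U_0)$ is still compatible (each stratum of $T$ maps affine-linearly into a stratum of $U_0$ because it sits inside a stratum of $T_1$ on which $g$ is affine linear).

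Next I would invoke the previous lemma with the map $f : (X, S_0) \to (Y, T_0)$ and the refinement $T$ of $T_0$, to produce a refinement $S$ of $S_0$ making $f : (X, S) \to (Y, T)$ compatible. Now both $f : (X,S) \to (Y,T)$ and $g : (Y,T) \to (Z, U_0)$ are compatible with their stratifications.

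Finally I would verify directly that $g \circ f : (X, S) \to (Z, U_0)$ is compatible. For a stratum $X_\alpha$ of $S$, compatibility of $f$ gives a stratum $Y_\beta$ of $T$ with $f(X_\alpha) \subseteq Y_\beta$ and $f\vert_{X_\alpha}$ the restriction of an affine-linear map between the ambient Euclidean spaces. Likewise $g(Y_\beta) \subseteq Z_\gamma$ for some stratum $Z_\gamma$ of $U_0$, with $g\vert_{Y_\beta}$ affine linear. Hence $(g \circ f)(X_\alpha) \subseteq Z_\gamma$ and $(g \circ f)\vert_{X_\alpha}$ is a composition of restrictions of affine-linear maps, which extends to an affine-linear map of the ambient spaces. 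This exhibits $g \circ f$ as compatible with $(S, U_0)$, so $g \circ f$ is PL. The only place where any genuine content is used is the preceding lemma; everything else is bookkeeping about affine-linear maps, so there is no real obstacle beyond aligning the two stratifications on $Y$.
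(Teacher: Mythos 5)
Your proof is correct, and it is the same common-refinement strategy the paper uses: align the two witnessing stratifications on the middle space $Y$ by passing to a common refinement $T$, then pull $T$ back to $X$ and compose. The one routing difference is which lemma you cite for the pullback. The paper appeals to the \emph{immediately} preceding lemma (``if $f : X \to Y$ is PL and $T$ is any stratification of $Y$, there exists a stratification $S$ of $X$ with $f : (X,S) \to (Y,T)$ compatible''), applying it directly to $f$ and the stratification $T'$ witnessing $g$'s PL-ness. You instead invoke the earlier refinement lemma (the one for a \emph{refinement} $T'$ of a given $T$) and supply the missing step yourself --- forming the common refinement and then observing that $g$ remains compatible because the identity $(Y,T) \to (Y,T_1)$ is compatible. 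In effect you have inlined the proof of the intermediate lemma rather than quoting its statement. Both are fine; the paper's version is shorter because that lemma was proved precisely so this step could be dispatched in one line.
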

\begin{proof}

Let $X,Y,Z$ be PL spaces and let $f : X \to Y$, $g : Y \to Z$ be PL maps.  Then, there must exist stratifications $S$ of $X$, $T,T'$ of $Y$ and $U$ of $Z$, such that $f : (X,S) \to (Y,T)$ is compatible with the stratifications, and $g : (Y,T') \to (Z,U)$ is compatible with the stratifications. The previous lemma implies that there exists $S'$ such that $f : (X,S') \to (Y,T')$ is compatible with the stratifications.  It follows that the composed map $(X,S') \to (Z,U)$ is compatible with the stratifications. 
\end{proof} 
\begin{ex}
Notice that the space $G(n,m)$ has a natural piecewise linear stratification. This is given by declaring two graphs in the same stratum, if they are homeomorphic in a way respecting all markings, and the ribbon structure, but not necessary the metric. Each stratum consists of the quadrant of positive numbers in $\mathbb R^k$, where $k$ is the number of edges.

This can be used to define a stratification on $MG(n,m)$. Two meta-graphs are in the same stratum, if they consists of the same connected, directed graph $\gamma$, with labelings $RG_v\in G(v_{in},v_{out})$ in the same stratum at each vertex $v$, and such that the discrepancies at each edge $e$ have the same sign, i.e. the discrepancies are both either positive, negative or zero. Note that for a fixed graph $\gamma$, this is a refinement of the product stratification of $\prod_v G(v_{in},v_{out})$.   We obtain the boundary of a stratum in the case that the corresponding input and output lengths coincide.
\end{ex}

\subsection{Piecewise smooth differential forms and de Rham currents}
Let $(X,S)$ be a space with a PL stratification.   We will
define a complex  $\Omega^\ast(X,S)$ of piecewise smooth
differential form on $X$ with respect to this stratification $S$.
An element $\omega \in \Omega^\ast(X,S)$ is the data of a
differential form $\omega_\alpha \in \Omega^\ast(\br X_\alpha)$, for
each $\alpha \in S$,  such that if $\br X_\beta \subset \br
X_\alpha$,
$$
\omega_\alpha \mid_{\br X_\beta} = \omega_\beta
$$
In other words, $\Omega^\ast(X,S)$ is defined to be the inverse limit of the $\Omega^\ast(\br X_\alpha)$.

If $S'$ is a refinement of $S$, there is a natural map
$$
\Omega^\ast(X,S) \to \Omega^\ast(X,S').
$$
If $X$ is a space with a PL structure, given by an equivalence class of PL stratifications, let
$$
\Omega^\ast(X)  = \limdir \Omega^\ast(X, S)
$$
be the direct limit over all stratifications .  This is the complex of piecewise smooth differential forms on $X$.

If $f : X \to Y$ is a PL map, then there is a pull back map $f^\ast : \Omega^\ast(Y) \to \Omega^\ast(X)$.

\begin{definition}\label{PL-chain-functor}
A \emph{PL chain complex functor} is a functor $C_\ast$ from the category of PL spaces to that of chain complexes over $\R$ with the following properties.
\begin{enumerate}
\item
The homology of $C_\ast(X)$ coincides with the singular homology of $X$, with coefficients in $\R$.
\item\label{Eilenberg-Zilber}
There is a K\"unneth map
$$
C_\ast(X) \otimes C_\ast(Y) \to C_\ast(X \times Y)
$$
which makes $C_\ast$ into a symmetric monoidal functor.
\item
The complex $C_\ast(X)$ has the structure of differential graded $\Omega^\ast(X)$ module, in a functorial way.
\item\label{respect-refinement}
If $S$ is a PL stratification of $X$, compatible with the PL structure, then
$$
C_\ast(X) = \limdir_{\alpha \in S} C_\ast( \br X_{\alpha} ) .
$$
\end{enumerate}
\end{definition}

\begin{prop}\label{C-PL}
There exists a PL chain complex functor.
\end{prop}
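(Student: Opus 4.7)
The plan is to define $C_\ast$ as de Rham currents on each closed stratum glued together by the colimit built into property (4).

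For a closed polytope $P\subset\R^N$, let $\Omega^\ast(P)$ denote the restriction to $P$ of smooth forms on a neighborhood of $P$, and let $\mc D_\ast(P)$ denote the space of compactly-supported currents on $P$, namely the continuous dual of $\Omega^\ast(P)$ in its natural Fr\'echet topology, graded so that $\mc D_k$ pairs with $\Omega^k$, with differential $\partial$ equal to the adjoint of $d$. The cap product $(\omega\cap c)(\alpha)=c(\omega\wedge\alpha)$ makes $\mc D_\ast(P)$ into a dg $\Omega^\ast(P)$-module, and the external product of currents provides a K\"unneth morphism $\mc D_\ast(P)\otimes\mc D_\ast(Q)\to\mc D_\ast(P\times Q)$. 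Since $P$ is convex, the de Rham theorem (applied after a simplicial subdivision) identifies $H_\ast(\mc D_\ast(P))$ with $H_\ast^{\mathrm{sing}}(P;\R)$.

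For a PL space $X$ with stratification $S$, I set
$$C_\ast(X,S)=\limdir_{\alpha\in S}\mc D_\ast(\br X_\alpha),$$
the colimit in chain complexes over the poset of strata (ordered by inclusion of closures) with transition maps given by pushforward of currents along the proper affine-linear inclusions $\br X_\beta\hookrightarrow\br X_\alpha$. The cap product and K\"unneth morphism pass through the colimit. To see that the construction depends only on the PL structure I would check that any refinement $S'\to S$ induces a quasi-isomorphism $C_\ast(X,S)\to C_\ast(X,S')$, a claim which reduces to the case of a subdivision of a single polytope, where both sides compute the singular homology of that polytope. For functoriality under a PL map $f:X\to Y$, the lemmas just established provide compatible stratifications; on each closed stratum $f$ restricts to a proper affine-linear map to a closed stratum of $Y$, so pushforward of compactly supported currents assembles into a chain map $f_\ast:C_\ast(X)\to C_\ast(Y)$.

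Properties (2), (3), and (4) are essentially built into the construction. For property (1), note that each $\br X_\alpha$ is convex, so $\mc D_\ast(\br X_\alpha)$ is quasi-isomorphic to $\R$ concentrated in degree $0$; filtering the colimit by the dimension of strata yields a spectral sequence whose $E^1$-page is the cellular chain complex of $X$ associated to the PL cell decomposition coming from $S$, and this complex computes $H_\ast^{\mathrm{sing}}(X;\R)$. The main technical obstacle is precisely this last identification: the poset of strata is not filtered, so the colimit does not commute with homology on the nose, and one must verify that the spectral sequence has the correct incidence differential at $E^1$ (matching orientations across faces of one greater dimension) and collapses at $E^2$. Once that identification is secured, the remaining properties follow formally from the universal property of the colimit together with the polytope-level constructions of currents, cap product, and K\"unneth map.
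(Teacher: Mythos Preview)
Your approach is genuinely different from the paper's, and it has a real gap.  The paper does not glue currents on strata; instead it defines $C_n(X)$ as the $\R$-span of triples $(A,f,\omega)$ with $A\subset\R^{n+k}$ a compact polytope, $f:A\to X$ an arbitrary PL map, and $\omega\in\Omega^k(A)$ a piecewise smooth form, modulo reparametrisation by invertible affine-linear maps of $\R^{n+k}$ and additivity under cutting $A$ by an affine hyperplane.  The differential is $(\partial A,f|_{\partial A},\omega|_{\partial A})\pm(A,f,d\omega)$.  This is a ``PL singular chains with form coefficients'' model, and property~(4) holds \emph{on the nose} because any $(A,f,\omega)$ can, via the hyperplane-cutting relation, be rewritten as a sum of pieces each of which maps into a single $\br X_\alpha$.

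Your construction, by contrast, produces $C_\ast(X,S)$ depending on an actual stratification, and you then need to remove that dependence.  Here is the gap.  For a refinement $S'$ of $S$, the natural comparison map goes from the \emph{finer} stratification to the coarser one --- pushforward of currents along the inclusions $\br X'_\beta\hookrightarrow\br X_\alpha$ --- not in the direction you wrote; there is no canonical way to split a current on $\br X_\alpha$ into currents on the pieces of a subdivision.  So what you have is a diagram $S\mapsto C_\ast(X,S)$ with quasi-isomorphisms pointing toward coarsenings, over a poset that is filtered under refinement but not under coarsening.  Neither the limit nor the colimit of this diagram gives a functor for which property~(4) is a strict identity, and the paper uses that identity literally (for instance, to define the action in Theorem~\ref{main-thm} by specifying maps out of each $C_\ast(\Met(\gamma))$ separately).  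Saying the complexes are quasi-isomorphic for different $S$ is not enough to produce an honest functor on PL spaces; it only produces one on spaces equipped with a chosen stratification.  The ``singular'' device of allowing arbitrary PL domains $A$ (rather than just closures of strata) is exactly what lets the paper's construction be strictly independent of $S$, and your colimit over strata does not achieve this.
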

\begin{proof}[Proof (sketch)]
Define $C_n(X)$ to be vector space over $\R$ spanned by triples
\begin{enumerate}
\item
$A \subset \R^{n+k}$, a compact subset cut out by a finite number of linear inequalities, such that the interior of $A$ is dense in $A$,
\item
a PL map $f : A \to X$,
\item
a piecewise smooth form $\omega \in \Omega^k(A)$
\end{enumerate}
modulo the following equivalence relations.
\begin{enumerate}
\item
If $\phi : \R^{n+k} \to \R^{n+k}$ is an invertible affine-linear map, then
$$
(A,f,\omega) \sim \pm (\phi(A), f \circ \phi^{-1}, (\phi^{-1})^\ast \omega )
$$
where the sign is determined by whether or not $\phi$ preserves orientation.
\item
If $h : \R^n \to \R$ is an affine-linear function, let $A_+ = \{x \in A \mid h(x) \ge 0\}$, and let $A_- =\{x \in A \mid h(x) \le 0 \}$.  Then we identify
$$
(A,f,\omega) = ( A_+, f \vert_{A_+}, \omega\vert_{A_+}  ) +  ( A_-, f \vert_{A_-}, \omega\vert_{A_-}  ).
$$
\end{enumerate}
The differential is obvious:
$$
\d ( A, f,\omega)  = ( \partial A, f \vert_{\partial A}, \omega\vert_{\partial A}  ) \pm (A,f, \d \omega).
$$

The axioms are not difficult to check.

\end{proof}

\begin{remk}
Notice that by property \eqref{respect-refinement} of Definition \ref{PL-chain-functor}, to give a  chain on $X$, it suffices to give it on some stratification of $X$ in the given equivalence class.   In particular, for a point $x$ inside a stratum of $X$, we can always find a stratification such that this point is a stratum.  Then we can define a zero chain $[x]\in C_0(X)$ by taking the constant function $1$ on the stratum determined by $x$ and zero on all other strata. We will use this construction in section \ref{string-topology} to identify operation on the loop space of a manifold, determined by a ribbon graph $\gamma$.
\end{remk}

\section{A variant of the Hochschild chain complex}\label{modified-hochschild}

In this section, we recall the definition of the Hochschild chain complex and describe quasi-isomorphic models for it. In particular, we construct the space $L_\ast (\mc A)[n]$, which quasi-isomorphic to the $n^{th}$ fold Hochschild chain complex $CH_\ast(\mc A,\mc A)^{\otimes n}$, for which we will construct the structure of a lax algebra in the next section.

\begin{defn}
Let $(\mc A,d,\cdot)$ be a differential graded associative algebra. The \emph{Hochschild chain complex of $\mc A$} is defined to be $CH_\ast(\mc A,\mc A)=\bigoplus_{k\geq 0} \mc A^{\otimes k+1}$. There is a differential $\delta:CH_\ast(\mc A,\mc A)\to CH_\ast(\mc A,\mc A)$, with $\delta^2=0$, given by
\begin{multline*}\label{Hochschild-differential}
 \delta(a_0\otimes \cdots\otimes a_k) = \sum_{j=0}^k (-1)^{\epsilon_{j}}  a_0\otimes\cdots\otimes d(a_j)\otimes  \cdots\otimes a_k\\
\quad \quad\quad\quad\quad\quad\quad +\sum_{j=0}^{k-1} (-1)^{\epsilon_{j}} a_0\otimes\cdots\otimes (a_j\cdot a_{j+1})\otimes  \cdots\otimes a_k \\
  + (-1)^{(|a_k|+1)\cdot \epsilon_{k}} (a_k\cdot a_0)\otimes a_1\otimes \cdots\otimes a_{k-1},\quad\quad\quad
\end{multline*}
where $\epsilon_i=|a_0|+\ldots+|a_{i-1}|+i-1$.
\end{defn}

We are interested in a variant of $CH_\ast(\mc A,\mc A)$. To this end, give the standard $k$-simplex $\Delta_k=\{(t_1,\dots,t_k)| 0\leq t_1\leq \cdots\leq t_k\leq 1\}$ the structure of PL space in the obvious way.  Thus, there is a cosimplicial chain complex whose $k$-simplices are $C_\ast(\Delta_k)$.

\begin{defn}\label{LA-def}
We define the complex $L_\ast (\A)= \bigoplus_{k \geq 0} \mathcal
\A^{\otimes k+1} \otimes C_\ast(\Delta_k) / \sim $, where the tensor product is the completed tensor product,  and where $\sim$  is
the usual equivalence relation generated by,
\begin{equation*}
 ( a_0 \otimes\cdots \otimes a_{i-1}\cdot a_{i}\otimes
\cdots\otimes a_{k})\otimes c
 \sim (a_0 \otimes\cdots
\otimes a_{i-1}\otimes a_{i} \otimes \cdots\otimes a_k )\otimes
(\delta_i)_*c, 
\end{equation*}
where $\delta_i:\Delta_{k-1}\to \Delta_k$ is the inclusion of the $i^{th}$ face for $1\leq i\leq k$, and a similar relation for $\delta_{k+1}$. The differential on $L_\ast (\A)$ is induced from the differential on the tensor product of chain complexes $\A^{\otimes k+1} \otimes C_\ast(\Delta_k)$.
\end{defn}
The complex $L_\ast(\mc A)$ is quasi-isomorphic to the Hochschild chain complex $CH_\ast(\mc A, \mc A)$ of $\A$.  This is rather easy to see; one can define the Hochschild chain complex $CH_\ast(\mc A, \mc A)$ to be the chain complex associated to the simplicial chain complex whose complex of $n$ simplices is $\A^{\otimes n+1}$, and whose face maps are given by taking products.    The complex $L_\ast(\mc A)$ is defined by starting with the same simplicial chain complex and applying a variant of the realization functor.

The following more general proposition will be used in Section \ref{string-topology}. We use the notation $B\A=\bigoplus_{k\geq 0} (s\A)^{\otimes k}$ for the tensor algebra on the shifted space $s\A$, which is given by $\A$ shifted down by one. For a fixed $r\geq 1$, denote by $\A\otimes B\A\otimes \cdots \otimes \A\otimes B\A$, the $r$-fold tensor product of $\A\otimes B\A$. Homogeneous elements of this space are of the form $a_0\otimes (a_1\otimes\cdots\otimes a_{k_1-1})\otimes \cdots \otimes a_{k_{r-1}}\otimes (a_{k_{r-1}+1}\otimes\cdots\otimes a_{k_{r}-1})$, where the $i^{th}$ factor $\A\otimes B\A$, for $1\leq i \leq r$, consists of $k_i$ tensor products, so that the element of $\A$ in the $i^{th}$ factor $\A\otimes B\A$ is at the $k_{i-1}^{th}$ position, where $k_0=0$ is assumed.
This space has a differential $\delta$, given by,
\begin{multline*}
\delta\big(a_0\otimes (a_1\otimes\cdots\otimes a_{k_1-1})\otimes \cdots \otimes a_{k_{r-1}}\otimes (a_{k_{r-1}+1}\otimes\cdots\otimes a_{k_{r}-1})\big)\\
= \sum_{j=0}^{k_r -1} (-1)^{\epsilon_{j}} a_0\otimes \cdots \otimes d(a_j)\otimes \cdots \otimes a_{k_r-1}
+\sum_{j=0}^{k_r-2} (-1)^{\epsilon_{j}}a_0\otimes \cdots\otimes (a_j a_{j+1})\otimes \cdots \otimes a_{k_r-1} \\
+(-1)^{(|a_{k_r-1}|+1)\epsilon_{k_r-1}} (a_{k_r-1}a_0)\otimes a_1\otimes\cdots \otimes a_{k_r-2}, \quad
\end{multline*}
where $\epsilon_i=|a_0|+\cdots+|a_{i-1}|+i-r_i$, where $r_i$ denotes the number of $k_j$'s which are $\leq i$. Note that in the above, we multiply $\A$ in $\cdots\otimes B\A\otimes  \A\otimes B\A\otimes \cdots $ from the left using the rightmost tensor factor of the left $B\A$, and similar using the leftmost factor of the rigth $B\A$, except in the case of the unit $1_{B\A}$.

For any real number $s \geq 0$ consider the magnified simplex $\ s \Delta_{k-1}=\{(t_0, \ldots, t_{k-1}) ~| ~ 0= t_0\leq t_1\leq\cdots \leq t_{k-1}\leq s \}$. For any sequence of integers $0=k_0< k_1< \cdots< k_r=k$, and real numbers  $0=s_0\leq s_1\leq \cdots \leq s_r$, consider the following subset of $s_r \Delta_{k-1}$.  

$$S_{k_0,\ldots,k_r}^{s_1, \ldots, s_r}=\big\{0=t_0\leq t_1\leq \cdots\leq t_{k-1}\leq  s_r  \,\,\, \big| \,\,\, t_{k_j}=s_j \text{ for all  } j=0,\ldots,r-1 \big\}. $$

This is a subset of $s_r \Delta_{k-1}$ obtained by fixing the values of of the $k_j^{th}$ entry to a prescribed number $s_j$. Let us choose a refinement of $s_k \Delta_{k-1}$, such that $S_{k_0,\ldots,k_r}^{s_1, \ldots, s_r}$ defines a stratum.   Let $c_{k_0,\ldots,k_r}^{s_1, \ldots, s_r}  \in C_{k-r}(s_r\Delta_{k-1})$ be the fundamental chain of $S_{k_0,\ldots,k_r}^{s_1, \ldots, s_r}$. More explicitly, this is, in the notations of proposition \ref{C-PL}, a triple $(S_{k_0, \ldots, k_r}^{s_1, \ldots, s_r}\subset \R^{k-r},f:S_{k_0,\ldots,k_r}^{s_1, \ldots, s_r}\hookrightarrow s_r \Delta_{k-1},1\in \Omega^0(S_{k_0,\ldots,k_r}^{s_1, \ldots, s_r}))$.

\begin{prop}\label{LA-quasi-isos}
Given a sequence of numbers $0=s_0\leq s_1\leq \cdots \leq s_r$, there is a quasi isomorphism  $\A\otimes B\A\otimes \cdots \otimes \A\otimes B\A\to \left(\bigoplus_{i\geq 0}\A^{\otimes i+1}\otimes C_\ast(s_r\Delta_i)\right)/\sim$, given by
\begin{multline*}
a_0\otimes (a_1\otimes\cdots\otimes a_{k_1-1})\otimes \cdots \otimes a_{k_{r-1}}\otimes (a_{k_{r-1}+1}\otimes\cdots\otimes a_{k_{r}-1}) \mapsto \\
[a_0\otimes\cdots\otimes a_{k_r-1}\otimes c_{k_0,\ldots,k_r}^{s_1, \ldots, s_r}].
\end{multline*}
\end{prop}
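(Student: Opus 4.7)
My plan is to argue that both sides compute the Hochschild homology $HH_\ast(\A,\A)$, and that the proposed map matches up these identifications. The target $\big(\bigoplus_i \A^{\otimes i+1} \otimes C_\ast(s_r \Delta_i)\big)/\sim$ is quasi-isomorphic to $L_\ast(\A)$ via the rescaling map $s_r\Delta_i \to \Delta_i$ given by dividing coordinates by $s_r$, and by the discussion following Definition \ref{LA-def} we know $L_\ast(\A) \simeq CH_\ast(\A,\A)$. The source $\A \otimes B\A \otimes \cdots \otimes \A \otimes B\A$ with its cyclic differential is also a model for $CH_\ast(\A,\A)$: filtering by total tensor degree and multiplying adjacent factors across the $\A$/$B\A$ boundaries gives a quasi-isomorphism to the single cyclic bar complex $\A \otimes B\A$.

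The most concrete route I would take is induction on $r$. For $r=1$, the map sends $a_0 \otimes (a_1 \otimes \cdots \otimes a_{k-1})$ to the class of $a_0\otimes\cdots\otimes a_{k-1}$ tensored with the fundamental chain of $s_1\Delta_{k-1}$; after rescaling this is the standard realization map $\A \otimes B\A \to L_\ast(\A)$, a quasi-isomorphism of Eilenberg–Zilber type. For the inductive step, I would decompose the target by splitting at the fixed point $t_{k_{r-1}} = s_{r-1}$. The key geometric observation is that the stratum $S_{k_0,\ldots,k_r}^{s_1,\ldots,s_r}$ factors as the product of $S_{k_0,\ldots,k_{r-1}}^{s_1,\ldots,s_{r-1}} \subset s_{r-1}\Delta_{k_{r-1}-1}$ with the free simplex $(s_r-s_{r-1})\Delta_{k_r-k_{r-1}-1}$, glued at the $k_{r-1}$-th algebra slot. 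Combining this product structure with the equivalence relation $\sim$, one obtains an identification of the target with a relative tensor product over $\A$,
\[
\Big(\bigoplus_i \A^{\otimes i+1} \otimes C_\ast(s_r \Delta_i)\Big)\big/{\sim} \;\simeq\; \Big(\bigoplus \A^{\otimes \bullet+1} \otimes C_\ast(s_{r-1}\Delta_\bullet)\Big)\big/{\sim} \,\otimes_\A\, \Big(\bigoplus \A^{\otimes \bullet+1} \otimes C_\ast((s_r-s_{r-1})\Delta_\bullet)\Big)\big/{\sim}.
\]
The inductive hypothesis identifies the first factor with the $(r-1)$-fold bar complex, the base case identifies the second with $\A \otimes B\A$, and their relative tensor product over $\A$ (gluing the rightmost $\A$ of the first to the leftmost $\A$ of the second) is precisely our $r$-fold source. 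The proposed map respects these decompositions.

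The main obstacle I expect is the compatibility of differentials, in particular the cyclic rotation term $(-1)^{\cdots}(a_{k_r-1}\cdot a_0)\otimes a_1\otimes\cdots$. On the source this wraps the last tensor factor around to the front, and on the target it must correspond to a combination of a face map of $s_r\Delta_{k-1}$ together with the relation $\sim$ which identifies chains involving products with chains pushed forward along the $\delta_i$. Pinning down the signs and the combinatorics of this cyclic rotation, and in particular formulating carefully the relative tensor product over $\A$ in the decomposition step (which is not quite off-the-shelf and depends on how $\sim$ behaves at the shared boundary point $s_{r-1}$), is where most of the technical work will lie.
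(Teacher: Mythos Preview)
Your strategy is genuinely different from the paper's, and your inductive step has a real gap.

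The paper does not induct on $r$.  Instead it introduces an auxiliary equivalence relation $\sim'$ which, in addition to the face relations defining $\sim$, also imposes the degeneracy relation identifying $(\cdots\otimes a_{i-1}\otimes 1\otimes a_{i+1}\otimes\cdots)\otimes c$ with $(\cdots\otimes a_{i-1}\otimes a_{i+1}\otimes\cdots)\otimes(\sigma_i)_\ast c$.  Writing $S_k^{s_1,\ldots,s_r}=\bigcup_{k_0,\ldots,k_r} S^{s_1,\ldots,s_r}_{k_0,\ldots,k_r}$, the paper considers the square
\[
\xymatrix{
\bigoplus_k \A^{\otimes k}\otimes C_\ast(S_k^{s_1,\ldots,s_r})/{\sim}\ar[r]\ar[d] & \bigoplus_k \A^{\otimes k}\otimes C_\ast(s_r\Delta_{k-1})/{\sim}\ar[d]\\
\bigoplus_k \A^{\otimes k}\otimes C_\ast(S_k^{s_1,\ldots,s_r})/{\sim'}\ar[r] & \bigoplus_k \A^{\otimes k}\otimes C_\ast(s_r\Delta_{k-1})/{\sim'}.
}
\]
The vertical maps are quasi-isomorphisms (this is the usual normalised-versus-unnormalised comparison), and the bottom map is a strict isomorphism: surjectivity is obtained by inserting degenerate $1$'s at the fixed positions $s_1,\ldots,s_{r-1}$.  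Hence the top map is a quasi-isomorphism, and one finishes by identifying the upper-left corner with the $r$-fold bar complex.

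The problem with your inductive step is that neither side admits the relative tensor product decomposition you wrote down.  Both the source $(\A\otimes B\A)^{\otimes r}$ and the target carry a \emph{cyclic} differential: the wrap-around term sends $a_{k_r-1}$ across to multiply $a_0$.  Geometrically the parameter space is a circle, not an interval, so cutting at the single point $t_{k_{r-1}}=s_{r-1}$ produces an arc, not two circles.  Concretely, if you tried to realise your displayed isomorphism, the left-hand factor $\big(\bigoplus \A^{\otimes\bullet+1}\otimes C_\ast(s_{r-1}\Delta_\bullet)\big)/{\sim}$ is itself a cyclic (Hochschild-type) complex and has no residual $\A$-module structure to tensor over; and even if you replaced the factors by two-sided bar complexes $B(\A,\A,\A)$, gluing them over a single copy of $\A$ yields an interval, not a circle.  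You would need to glue at \emph{two} endpoints simultaneously, which is not a relative tensor product over $\A$ in any standard sense.  This is not a sign issue or a bookkeeping obstacle; the decomposition as stated is false, and the induction does not close.  The paper's normalisation argument avoids this entirely by never attempting to cut the circle.
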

\begin{proof}
From the equivalence relation in Definition \ref{LA-def}, one can see that the above map is a chain map. Let $S_k^{s_1, \ldots, s_r}=\bigcup_{k_0,\ldots, k_r} S_{k_0,\ldots, k_r}^{s_1, \ldots, s_r} $ and define two new complexes $C_1=\left(\bigoplus_{k\geq 0} \mc A^{\otimes k}\otimes C_\ast (S_k^{s_1, \ldots, s_r})\right) / \sim'$ and $C_2=\left(\bigoplus_{k\geq 0} \mc A^{\otimes k}\otimes C_\ast(s_r\Delta_{k-1})\right) / \sim'$, where the equivalence relation $\sim'$ is generated by the relations from Definition \ref{LA-def}, as well as the additional relation 
\begin{equation*}
 ( a_0 \cdots\otimes a_{i-1}\otimes1\otimes a_{i+1}
\otimes \cdots a_{k})\otimes c
 \sim (a_0 \cdots \otimes
a_{i-1}\otimes a_{i+1}\otimes \cdots  a_k )\otimes
(\sigma_i)_*c, 
\end{equation*}
where $\sigma_i:\Delta_{k}\to \Delta_{k-1}, (t_1,\dots, t_k)\mapsto (t_1,\dots, \hat t_i,\dots, t_k)$.  Since the $S_{k_0,\ldots,k_r}^{s_1, \ldots, s_r}$'s are strata of $s_k\Delta_{k-1}$, the canonical map $C_1\to C_2$ is an chain map. This chain map is clearly injective. It is also surjective, since every element in $C_2$ has a representative with degenerate $1$'s added at the fixed positions $s_1,\dots, s_{r-1}$. In short, this map is an strict isomorphism of complexes.  We thus have a commutative diagram,
\begin{equation*}
\xymatrix{  \bigoplus_{k\geq 0} \mc A^{\otimes k}\otimes C_\ast (S_k^{s_1, \ldots, s_r}) / \sim
\ar[r] \ar[d] &  \bigoplus_{k\geq 0} \mc A^{\otimes k}\otimes C_\ast (s_r\Delta_{k-1}) / \sim  \ar[d] \\
 \bigoplus_{k\geq 0} \mc A^{\otimes k}\otimes C_\ast (S_k^{s_1, \ldots, s_r}) / \sim' \ar[r] &  \bigoplus_{k\geq 0} \mc A^{\otimes k}\otimes C_\ast (s_r\Delta_{k-1}) / \sim' }
\end{equation*}
where the vertical maps are quasi-isomorphisms, and the bottom map is an isomorphism. It follows that the top map is also a quasi-isomorphism. To conclude, note that the above complex $\A\otimes B\A\otimes \cdots \otimes \A\otimes B\A$ is isomorphic to a model of $\bigoplus_{k\geq 0} \mc A^{\otimes k}\otimes C_\ast (S_k^{s_1, \ldots, s_r}) / \sim$, in which the de Rham currents $C_\ast (S_k^{s_1, \ldots, s_r})$ are replaced by simplicial chains.
\end{proof}
Note that in the special case $r=1$ and $s_r=1$, we get the following corollary.
\begin{cor}
$L_\ast(\mc A)$ is quasi-isomorphic to $CH_\ast(\A,\A)$.
\end{cor}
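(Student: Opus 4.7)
The plan is to deduce the corollary as the degenerate case $r = 1$, $s_1 = 1$ of Proposition \ref{LA-quasi-isos}. The specialization is almost mechanical, but two identifications must be made explicit, so I would organize the argument in two short steps.

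First, I would specialize the target. With $r = 1$, the only allowed data is the sequence $0 = k_0 < k_1 = k$ together with $s_0 = 0$, $s_1 = 1$; then the magnified simplex $s_1 \Delta_{k-1}$ coincides with the standard simplex $\Delta_{k-1}$, the set $S_{0,k}^{1}$ is all of $\Delta_{k-1}$, and the chain $c_{0,k}^{1} \in C_{k-1}(\Delta_{k-1})$ is the top-dimensional fundamental chain given by the identity inclusion. The right-hand side of the proposition's map is then literally
$$
\left(\bigoplus_{i \geq 0} \A^{\otimes i+1} \otimes C_\ast(\Delta_i)\right)\big/{\sim} \;=\; L_\ast(\A),
$$
by Definition \ref{LA-def}.

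Second, I would identify the source. As a graded vector space,
$$
\A \otimes B\A \;=\; \bigoplus_{k \geq 1} \A \otimes (s\A)^{\otimes k-1},
$$
which, after the shift and reindexing, is $CH_\ast(\A,\A) = \bigoplus_{k \geq 0} \A^{\otimes k+1}$. Specializing the differential written just above Proposition \ref{LA-quasi-isos} to $r = 1$ yields the internal differential $\sum \pm a_0 \otimes \cdots \otimes d(a_j) \otimes \cdots$, the adjacent multiplications $\sum \pm a_0 \otimes \cdots \otimes (a_j a_{j+1}) \otimes \cdots$, and the cyclic term $\pm (a_{k-1} a_0) \otimes a_1 \otimes \cdots \otimes a_{k-2}$ with signs $\epsilon_i = |a_0| + \cdots + |a_{i-1}| + i - 1$ (since $r_i = 1$ for all $i \geq 1$ in this case). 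This is exactly the Hochschild differential $\delta$ from the definition of $CH_\ast(\A,\A)$.

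With these two identifications in place, the map in Proposition \ref{LA-quasi-isos} becomes a quasi-isomorphism $CH_\ast(\A,\A) \to L_\ast(\A)$ sending $a_0 \otimes \cdots \otimes a_{k-1}$ to $[a_0 \otimes \cdots \otimes a_{k-1} \otimes c_{0,k}^{1}]$, and the corollary follows. I do not anticipate any genuine obstacle; the only point requiring care is the bookkeeping of signs and the degree shift when matching $\A \otimes B\A$ with $CH_\ast(\A,\A)$, which is the reason the proposition was stated in the more flexible form with shifted tensor factors.
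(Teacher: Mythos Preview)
Your proposal is correct and follows exactly the paper's approach: the corollary is stated immediately after Proposition \ref{LA-quasi-isos} with the remark that it is the special case $r=1$, $s_r=1$, and your two steps simply make explicit the identifications that this specialization entails. Your bookkeeping of the target, the source, the fundamental chain $c_{0,k}^{1}$, and the signs is accurate.
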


We will also be using a variant of $CH_\ast(\A,\A)^{\otimes m}$.
\begin{definition}\label{LA[m]}
Define $L_\ast(\A)[m]$ to be the quotient of
$$
\bigoplus_{l_1,\ldots,l_m}  \A^{\otimes l_1 + 1 } \otimes \cdots \otimes \A^{\otimes l_m + 1 } \otimes C_\ast( \Delta_{l_1} \times \cdots \times \Delta_{l_m} )
$$
by the equivalence relation generated by
\begin{multline*}
\Big(\cdots\otimes ( a^j_0 \otimes\cdots \otimes a^j_{i-1}\cdot a^j_{i}\otimes
\cdots\otimes a^j_{l_j})\otimes \cdots \Big)\otimes c  \\ \sim \Big(\cdots\otimes (a^j_0 \otimes\cdots
\otimes a^j_{i-1}\otimes a^j_{i} \otimes \cdots\otimes a^j_{l_j} )\otimes \cdots \Big)\otimes
(\delta_i^j)_*c,
\end{multline*}
where $\delta_i^j: \Delta_{l_1} \times \cdots \times \Delta_{l_j-1}\times\cdots \times \Delta_{l_m} \to  \Delta_{l_1} \times \cdots\times \Delta_{l_j}\times\cdots \times \Delta_{l_m} $ is the inclusion of the $i^{th}$ face in the $j^{th}$ simplex.
The differential on $L_\ast (\A)$ is induced from the differential on the tensor product chain complexes $\A^{\otimes k} \otimes C_\ast(\Delta_k)$.

Note that in the case $m=1$, we recover $L_\ast(\mc A)[1]=L_\ast(\mc A)$ from Definition \ref{LA-def}.
\end{definition}
Thus, there is a map $ L_\ast(\A)^{\otimes m} \to L_\ast(\A)[m] $ coming from the map
$$
C_\ast( \Delta_{l_1} ) \otimes \cdots \otimes C_\ast ( \Delta_{l_m} ) \to C_\ast( \Delta_{l_1} \times \cdots \times \Delta_{l_m} )
$$
from Definition \ref{PL-chain-functor}\eqref{Eilenberg-Zilber}. This map is a quasi-isomorphism.

%\section{String topology on non-simply connected manifolds}
%\section{Non-compact manifolds}

\section{Action of moduli spaces of Riemann surfaces}

We are now ready to state and proof our main theorem, which gives a lax algebra over the prop $C_\ast(MG)$ on for the spaces $L_\ast (\mc A)[n]$.    As always, let $\A$ denote a Calabi-Yau elliptic space, of parity $p(\A)$.

\begin{thm}\label{main-thm}
Fix a Hermitian metric on $\A$.  Then, there is a $\Z/2$ graded lax algebra over the prop $C_\ast(MG)$ sending
$$
n \mapsto L_\ast(\A)[n].
$$
Thus, there are chain maps
$$
C_\ast(MG(n,m) ) \otimes L_\ast(\A)[n] \to L_\ast(\A)[m]
$$
compatible with differentials, composition, and tensor product maps.

Further, this structure of lax $C_\ast(MG)$ algebra structure on $L_\ast(\A)$ independent, up to homotopy, on the choice of Hermitian metric on $\A$.  More precisely, if we have a smooth family of Hermitian metrics on $\A$ parametrized by $\Delta_k$, then there are chain maps
$$
C_\ast(MG(n,m)) \otimes L_\ast(\A)[n] \to L_\ast(\A)[m] \otimes \Omega^\ast(\Delta_k)
$$
making giving $L_\ast(\A)$ a family of lax $C_\ast(MG)$ algebra structures over the differential graded algebra $\Omega^\ast(\Delta_k)$.  
\end{thm}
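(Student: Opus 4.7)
The plan is to generalize the explicit heat-kernel construction of Theorem \ref{Kevins-map} from ribbon graphs in $\Gamma$ to meta-graphs in $MG$, with the ``closed'' boundary circles of graphs in $G(n,m)$ now carrying Hochschild-chain inputs and outputs rather than open boundary data. The construction is local in the meta-graph: each vertex $v$ produces a vertex operator built from its ribbon graph $RG_v$, and each interior edge of the meta-graph contributes an extra heat-kernel propagator whose time parameter is the edge discrepancy.

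First I would define, for a single ribbon graph $RG \in G(p,q)$, an explicit chain map
$$
C_\ast(G(p,q)) \otimes L_\ast(\A)[p] \to L_\ast(\A)[q],
$$
by contracting heat kernels $K(x,y,t_e)$ along internal edges of $RG$ according to the ribbon combinatorics. An input $(a_0\otimes\cdots\otimes a_k)\otimes c$ with $c\in C_\ast(\Delta_k)$ to an incoming circle is inserted by placing the $a_i$ at the positions on the circle read off from $c$; around each outgoing circle we record the resulting configuration of points, parametrized by an appropriate simplex, to produce an output element of $L_\ast(\A)[q]$. This mirrors the open construction of \cite{C3} but treats each boundary as a cyclically parametrized loop of Hochschild insertions.

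Next I would assemble these vertex operators into a meta-graph operation. For $\gamma\in MG(n,m)$, I would proceed level by level from the lowest-level vertices upward, composing the vertex operators and inserting along each interior edge $e$ of $\gamma$ an additional heat kernel $e^{-s_e H}$, where $s_e\ge 0$ is the discrepancy. Well-definedness on the quotient space $MG(n,m)$ reduces to the case where all edges from $v_1$ to $v_2$ have zero discrepancy: then the inserted propagators are identities, and the gluing formula of \cite{C3} identifies the two-vertex composite with the single vertex operator of the ribbon graph obtained by gluing $RG_{v_1}$ to $RG_{v_2}$ along the corresponding boundaries. Compatibility with composition and disjoint union in $MG$ then follows directly from the inductive assembly. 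The chain-map property uses the heat equation $\partial_t K = -[Q, K]$ to convert the combinatorial differential on $C_\ast(MG(n,m))$ (contracting edges of ribbon graphs and shrinking discrepancies to zero) into the Hochschild differential on $L_\ast(\A)[m]$ (which applies $Q$, multiplies adjacent insertions, and implements the cyclic tail term).

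Finally, for the homotopy statement, I would rerun the same construction with a $\Delta_k$-parametrized family of Hermitian metrics. The heat kernel then becomes a piecewise smooth form in $\A\otimes\A\otimes\Omega^\ast(\Delta_k\times\R_{>0})$, and substituting it into every propagator yields maps into $L_\ast(\A)[m]\otimes\Omega^\ast(\Delta_k)$ satisfying the lax prop axioms over $\Omega^\ast(\Delta_k)$. I expect the main obstacle to be the differential matching, namely the Stokes-type argument identifying the boundary operator on $C_\ast(MG(n,m))$ coming from degenerating strata (edge lengths and discrepancies shrinking to zero) with the Hochschild differential on $L_\ast(\A)[m]$: the boundary terms $\partial_t K = -[Q,K]$ together with the interaction of $K$ with multiplication must combine to produce exactly the missing cyclic and multiplicative terms. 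This is the closed-string analogue of the differential matching performed in \cite{C3}, and carrying it out in the meta-graph setting, where one must simultaneously track edge lengths inside each $RG_v$ and discrepancies along edges of $\gamma$, is what occupies the heart of Section 8.
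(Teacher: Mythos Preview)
Your proposal diverges from the paper's proof in a structurally important way, and the point of divergence contains a genuine gap.

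The paper does \emph{not} assemble vertex operators and insert extra heat-kernel propagators $e^{-s_e H}$ along the interior edges of the meta-graph. Instead, it eliminates the meta-graph structure geometrically before any analysis takes place: given $\gamma\in MG(n,m)$ and simplex points $x_i\in\Delta_{k_i}$, it first attaches $k_i$ new incoming external edges to each incoming boundary of the relevant $RG_v$ at the positions dictated by $x_i$, then \emph{rescales} each outgoing boundary circle of $RG_{v_1}$ until its length matches the corresponding incoming boundary of $RG_{v_2}$, and glues. Iterating through the levels produces a single ribbon graph
\[
\Phi(x_1,\ldots,x_n,\gamma)\in \Gamma\bigl(n+\textstyle\sum k_i,\,m\bigr).
\]
All of the heat-kernel work is then outsourced to the already-proven open result (Theorem~\ref{Kevins-map}) applied to the ribbon graph $\gamma'$ obtained by deleting the $m$ outgoing circles. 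One then caps differential forms on $\Met(\gamma)$ against chains, and pushes forward along $\Met(\gamma)\to\Delta_{l_1}\times\cdots\times\Delta_{l_m}$ to land in $L_\ast(\A)[m]$. In particular, there is no new Stokes-type differential matching to perform: compatibility with differentials is inherited directly from \cite{C3}.

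Your propagator-on-meta-edges idea has two concrete problems. First, the discrepancy along an interior edge is a signed quantity (the definition in the paper is the \emph{difference} of lengths, not the absolute difference), so $e^{-s_e H}$ is not defined when $s_e<0$; you cannot run the heat equation backward. Second, and more seriously, it is unclear what $e^{-s_e H}$ is supposed to act on. The output of the vertex operator at $v_1$ lives in (a model for) $L_\ast(\A)[q]$, not in $\A$, and there is no natural action of the Laplacian $H$ on Hochschild chains that would serve as a ``circle propagator'' interpolating between mismatched boundary lengths. The paper's rescaling trick sidesteps this entirely: rescaling an embedded outgoing boundary circle changes the lengths of the edges on that circle, hence the heat-kernel times in the \cite{C3} construction, but remains inside the framework where Theorem~\ref{Kevins-map} already applies.

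What your approach would buy, if it could be made to work, is a more modular description in which each vertex contributes independently. But the cost is precisely the differential matching you flag as the main obstacle, together with the need to invent a meaningful edge propagator. The paper's route trades that modularity for a reduction to the open case, so that the hard analysis is already done.
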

Although this structure apparently depends on the choice of Hermitian metric on $\A$ (e.g. the metric on the manifold in the string topology example), later we will see that the if we vary the metric then the structure of $C_\ast(MG)$ algebra changes by a homotopy.  Since the space of Hermitian metrics on $\A$ is contractible in all examples, the $C_\ast(MG)$ structure is well-defined up to contractible choice.  
\begin{proof}
To keep the notation simple, we will describe what happens when $p(\A)$ is even.  The proof in the other case is similar.  

We want to define maps
$$
C_\ast(MG(n,m))\otimes L_\ast
(\A)[n]\to L_\ast (\A)[m]
$$
To do this, we will give maps
$$
\A^{\otimes k_1 + 1} \otimes \cdots \otimes \A^{\otimes k_n + 1}  \otimes C_\ast ( \Delta_{k_1} \times \cdots \times \Delta_{k_n} \times MG(n,m)  ) \to  L_\ast (\A)[m]
$$

\subsection*{Step 1}

Let us define a map
$$
\Phi : \Delta_{k_1} \times \cdots \times \Delta_{k_n} \times MG(n,m) \to \Gamma ( n + \sum k_i , m )
$$
as follows.  Let $x_i \in \Delta_{k_i}$, for $1 \le i \le n$, and let  $\gamma \in MG(n,m)$.  Thus, $\gamma$ is a directed graph, with $n$ incoming and $m$ outgoing external edges, each of whose vertices $v$ is labeled by a ribbon graph $RG_v \in G(v_{in}, v_{out})$.

Recall that we can think of $\tr_k$ as the set of  $\{ 0 \le t_1 \le
t_2 \le \ldots \le t_k \le 1\}$.  Alternatively, we can think of
$\tr_k$ as the set of $k$ labeled points on $S^1$, which are  in
cyclic order, positioned in such a way that the base point $1$ on
$S^1$ lies between the $k^{th}$ and the $1$st point.

Let $v$ be a vertex (of any level), and suppose $v$ is attached to the $i^{th}$ incoming external edge of $\gamma$.  The ribbon graph $RG_{v}$ has an incoming boundary component corresponding to this $i^{th}$ incoming external edge of $\gamma$.  This incoming boundary component is parametrised.  Let us replace the ribbon graph $RG_v$ by the ribbon graph with $k_i$ additional incoming external edge of length $0$ attached to this incoming boundary component.  The positions of these external edges are determined by the point $x_i \in \tr_{k_i}$, where we identify $\tr_{k_i}$ with the set of $k$ labeled points on the circle as above.

Repeat this step for all incoming edges attached to $v$, and all vertices $v$.

Recall the \emph{level} of a vertex $v$ of $\gamma$ is the length directed longest path of edges starting at an incoming external edge and ending at $v$.   Every level one vertex is attached to an incoming external edge of $\gamma$, but vertices of levels greater than one can be attached to these incoming external edges also.

The next step is to glue these ribbon graphs $RG_v$ together. We will start by gluing the level one vertices to the level two, and the level two to the level three, and so on.

Let  $v$ be a vertex of level $2$.  Let $e$ be an edge of $\gamma$ which goes from $v'$ to $v$. Thus, $v'$ is a vertex of level $1$.  There is an outgoing boundary component of $RG_{v'}$ and incoming boundary component of $RG_{v}$ attached to $e$. Rescale uniformly this boundary component  of $RG_{v'}$ until it's the same length as the corresponding boundary component of $RG_v$.  Then, glue these two parametrised boundary components together.

Repeat this procedure for edges joining vertices of level two to level three, and so forth.  We end up with a ribbon graph
$$
\Phi (x_1, \ldots , x_k , \gamma  )  \in \Gamma ( n + \sum k_i, m ).
$$

\subsection*{Step 2}
Let
$$
X( k_1,\ldots, k_n ; m ) \subset \Gamma ( n + \sum k_i, m)
$$
be the image of $\Phi$.   $X(k_1,\ldots,k_n;m)$ is the space of all metrised ribbon graphs satisfying certain properties.    Let us stratify $X(k_1,\ldots,k_n;m)$ in the natural way, so that the strata are the intersections of the strata of $\Gamma ( n + \sum k_i, m)$ with $X(k_1,\ldots,k_n;m)$.

Let $\gamma \in X(k_1,\ldots,k_n;m)$ be a ribbon graph.  Then the closure of the stratum containing $\gamma$ is $\Met(\gamma)$, the space of all allowable metrics on $\gamma$. From now on, we will think of $\gamma$ as a ribbon graph \emph{without} a metric.

The ribbon graph $\gamma$ has $m$ outgoing external edges.  Each of these is attached to a boundary component of the ribbon graph which is an embedded circle.   These boundary circles are disjoint, and each has precisely one external edge.   Let us call these boundary components of $\gamma$ outgoing, and the remaining boundary components incoming.

The external edge on these boundary components serves as a base point. We orient these outgoing boundary components in the opposite way to the orientation of the incoming boundary components, i.e.\ in the opposite of the natural orientation that exists on a boundary of any ribbon graph. If we choose a metric on $\gamma$, the choice of orientation and of basepoint gives a way to parametrise these outgoing boundary circles.

Each of the $m$ outgoing boundary circle of $\gamma$ has two sides. One, which we call outgoing, is the one with the single external outgoing edge attached to it. The other has some number $l_i$ ($1 \le i \le m$) of edges attached to it.

The position of these $l_i$ marked points on the $i^{th}$ boundary circle gives a map
$$
\Met(\gamma) \to \Delta_{l_i}.
$$

Let $\gamma'$ be the ribbon graph obtained by removing these $m$ boundary circles.  There is a map
$$
\Met(\gamma) \to \Met(\gamma').
$$
In fact, we can identify
$$
\Met(\gamma) = \Met(\gamma') \times \Delta_{l_1} \times \cdots \times \Delta_{l_m} \times \R_{> 0}^m
$$
where the factor of $\R_{> 0}^m$ corresponds to the lengths of the outgoing boundary circles.

\subsection*{Step 3}
Because of the push-forward map
$$C_\ast( \Delta_{k_1} \times \cdots \times \Delta_{k_n}  \times MG(n,m) ) \to C_\ast(X (k_1,\ldots,k_n;m ) ) $$
it suffices to define maps
$$
\A^{\otimes k_1 +1 } \otimes \cdots \otimes \A^{\otimes k_n + 1} \otimes C_\ast(X (k_1,\ldots,k_n;m ) ) \to L(\A)[m].
$$
Because
$$
 C_\ast(X (k_1,\ldots,k_n;m ) )  = \limdir C_\ast(\Met(\gamma))
$$
it suffices to define maps
$$
\A^{\otimes k_1 +1 } \otimes \cdots \otimes \A^{\otimes k_n +1} \otimes C_\ast(\Met(\gamma) ) \to L(\A)[m]
$$
and check that they are compatible with the direct limit.

The construction of \cite{C3} gives us a map
$$
K : \A^{\otimes k_1 +1 } \otimes \cdots \otimes \A^{\otimes k_n +1}  \to  \Omega^\ast(\Met(\gamma')) \otimes \A^{\otimes l_1} \otimes \cdots \otimes  \A^{\otimes l_m} .
$$
Pulling back via the map $\Met(\gamma) \to \Met(\gamma')$, we get maps
$$
\A^{\otimes k_1 +1 } \otimes \cdots \otimes \A^{\otimes k_n +1} \to  \Omega^\ast(\Met(\gamma)) \otimes \A^{\otimes l_1} \otimes \cdots \otimes  \A^{\otimes l_m}.
$$
Now, the cap product action of $\Omega^\ast(\Met(\gamma))$ on $C_\ast(\Met(\gamma)$ gives maps
\begin{multline*}
\A^{\otimes k_1 +1 } \otimes \cdots \otimes \A^{\otimes k_n +1}  \otimes C_\ast(\Met(\gamma)) \\ \to  \Omega^\ast(\Met(\gamma)) \otimes C_\ast(\Met(\gamma)) \otimes  \A^{\otimes l_1} \otimes \cdots \otimes  \A^{\otimes l_m} \\ \to C_\ast(\Met(\gamma)) \otimes  \A^{\otimes l_1} \otimes \cdots \otimes  \A^{\otimes l_m}.
\end{multline*}
Now, we can push forward along the map $\Met(\gamma) \to \Delta_{l_1} \times \cdots \times \Delta_{l_m} $, to get a map
$$
\A^{\otimes k_1 +1 } \otimes \cdots \otimes \A^{\otimes k_n +1}  \otimes C_\ast(\Met(\gamma)) \to C_\ast (  \Delta_{l_1} \times \cdots \times \Delta_{l_m} ) \otimes  \A^{\otimes l_1} \otimes \cdots \otimes  \A^{\otimes l_m} .
$$
Now, there is a natural map
$$ C_\ast (  \Delta_{l_1} \times \cdots \times \Delta_{l_m} ) \otimes \A^{\otimes l_1 + 1} \otimes \cdots \otimes \A^{\otimes l_k+1} \to L_\ast(\A)[m] .$$
Thus, to finish the construction, we need to give a map
$$
\A^{\otimes l_i } \to \A^{\otimes l_i + 1}.
$$
This is simply the map
$$
a_1 \otimes \cdots \otimes a_{l_1} \to 1 \otimes a_1 \cdots \otimes a_{l_i} .
$$

This construction is automatically compatible with the differentials, because the construction of \cite{C3} is.

\vspace{5pt}

It remains to check two things.
\begin{enumerate}
\item
If $\gamma_1,\gamma_2$ are marked ribbon graphs such that $\Met(\gamma_1) \subset \Met(\gamma_2)$, then the  diagram
$$
\xymatrix{  \A^{\otimes k_1 +1 } \otimes \cdots \otimes \A^{\otimes k_n +1} \otimes C_\ast(\Met(\gamma_1) )   \ar[d] \ar[dr] &   \\
 \A^{\otimes k_1 +1 } \otimes \cdots \otimes \A^{\otimes k_n +1} \otimes C_\ast(\Met(\gamma_2) ) \ar[r] &   L(\A)[n]   }
$$
commutes.
\item
The operations are compatible with the composition maps $C_\ast(MG(n,m))\otimes C_\ast(MG(m,k) ) \to C_\ast(MG(n,k))$.
\end{enumerate}
Both of these points are quite straightforward to check.

\vspace{5pt}
The structure of $C_\ast(MG)$ algebra on $L_\ast(\A)$ we have constructed depends on the choice of Hermitian metric on $\A$.  Next we will see that, up to homotopy, this algebra structure is independent of the metric.  This is rather straightforward, using the results of \cite{C3}.  Suppose one had a smooth family of Hermitian metrics on $\A$, parametrized by a simplex $\Delta_k$.  Then, in \cite{C3}, it is shown that the structure of open TCFT on $\A$ lifts to  a family of open TCFTs, over the differential graded ring $\Omega^\ast(\Delta_k)$.  More precisely, the chain maps 
$$
K : \A^{\otimes n} \to \Omega^\ast(\Gamma(n,m) \otimes \A^{\otimes m}
$$
lift to chain maps 
$$
K : \A^{\otimes n} \to \Omega^\ast(\Gamma(n,m) \otimes \A^{\otimes m} \otimes \Omega^\ast(\Delta_k)
$$
when we have a family of metrics depending on $\Delta_k$.  

The only time the Hermitian metric was used in this paper was when we invoked the construction of \cite{C3}.  It follows that if we have a smooth family of Hermitian metrics on $\A$, parametrized by $\Delta_k$, we get chain maps
$$
C_\ast( MG(n,m) ) \otimes L_\ast(\A) [n] \to L_\ast(\A) [m] \otimes \Omega^\ast(\Delta_k)
$$
giving a family of lax $C_\ast(MG)$ algebra structures on $L_\ast(\A)$, over the differential graded algebra $\Omega^\ast(\Delta_k)$.

\vspace{5pt}

Recall the above construction is for the case when $p(\A)$ is even.
When $p(\A)$ is odd, we need to modify things a little bit, to
include the determinental local system.  We end up with operations
from $C_\ast(G(n,m), \det^{p(\A)})$.  This space also forms a
prop.  However, the local system $\det$ is trivial on $G(n,m)$ in a
way respecting the composition maps. So there is a $\Z/2$ graded
isomorphism of props
$$
C_\ast(G(n,m), \det^{p(\A)}) \iso C_\ast(G(n,m)).
$$
This is not a $\Z$ graded isomorphism, as $\det$ is a graded local system.
(The reason we need to worry about $\det$ at all is that it can not be trivialized  on $\Gamma(n,m)$) in a way compatible with the ``open-string'' gluing).

\end{proof}

\begin{cor}\label{homology-cor}
After passing to homology the above lax algebra structure yields a homological field theory on the Hochschild homology. That is, there is the following map of props,
$$H_\ast(\mc S(n, m)) \to \text{Hom} (HH_\ast (\mc A, \mc A)^{\otimes n}, HH_\ast (\mc A, \mc A)^{\otimes m} ).$$  
This map is independent of the choice of Hermitian metric on $\mc A$, as long as this space of Hermitian metrics is path-connected (which it is in all examples). 
\end{cor}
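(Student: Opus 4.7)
My plan is to obtain the prop morphism by passing to homology in the chain-level lax algebra structure of Theorem \ref{main-thm}, and to establish metric independence using its parametrised variant.  Applying $H_\ast$ to the structure maps
\[
C_\ast(MG(n,m)) \otimes L_\ast(\A)[n] \to L_\ast(\A)[m],
\]
together with the K\"unneth map of Definition \ref{PL-chain-functor}\eqref{Eilenberg-Zilber}, produces homology maps
\[
H_\ast(MG(n,m)) \otimes H_\ast(L_\ast(\A)[n]) \to H_\ast(L_\ast(\A)[m]).
\]
Proposition \ref{MG=S} gives $H_\ast(MG(n,m)) \cong H_\ast(\mc S(n,m))$ via the weak equivalence of props, and the corollary to Proposition \ref{LA-quasi-isos}, combined with the tensor coherence quasi-isomorphism $L_\ast(\A)^{\otimes n} \to L_\ast(\A)[n]$ from Definition \ref{LA[m]}, gives $H_\ast(L_\ast(\A)[n]) \cong HH_\ast(\A,\A)^{\otimes n}$.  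Combining these identifications yields the desired map
\[
H_\ast(\mc S(n,m)) \to \Hom(HH_\ast(\A,\A)^{\otimes n}, HH_\ast(\A,\A)^{\otimes m}).
\]

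Next, I would verify that this data assembles into a genuine map of props rather than a lax one.  Compatibility with composition descends immediately from the composition compatibility in Theorem \ref{main-thm}.  The monoidal compatibility is where the lax nature of the chain-level theory is essential: by definition of a lax algebra, the coherence maps $L_\ast(\A)[n] \otimes L_\ast(\A)[m] \to L_\ast(\A)[n+m]$ are quasi-isomorphisms, so they become isomorphisms upon passing to homology.  Consequently the induced functor out of $H_\ast(MG)$ is strict symmetric monoidal, and composing with the isomorphism $H_\ast(\mc S) \cong H_\ast(MG)$ produces the sought prop morphism.

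Finally, to prove metric independence I would invoke the parametrised version stated at the end of Theorem \ref{main-thm}.  Path-connectedness of the space of Hermitian metrics lets me join any two metrics $g_0,g_1$ by a smooth family $\{g_t\}_{t\in\Delta_1}$, and the parametrised construction yields chain maps
\[
C_\ast(MG(n,m)) \otimes L_\ast(\A)[n] \to L_\ast(\A)[m] \otimes \Omega^\ast(\Delta_1)
\]
whose restrictions to the two vertices of $\Delta_1$ are the structure maps for $g_0$ and $g_1$.  Decomposing elements of $\Omega^\ast(\Delta_1)$ into their $\Omega^0$ and $\Omega^1$ components and integrating the $\Omega^1$-component over $\Delta_1$ produces an explicit chain homotopy between the two operations, so they agree on homology.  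I expect the main obstacle to be the bookkeeping involved in tracking the three layers of quasi-isomorphism identifications (from $MG \simeq \mc S$, from $L_\ast(\A)^{\otimes n} \simeq L_\ast(\A)[n]$, and from $L_\ast(\A) \simeq CH_\ast(\A,\A)$) and verifying that each is compatible with composition and with the homotopy coming from varying the metric; but these compatibilities are essentially built into the constructions of Section \ref{modified-hochschild} and the preceding proof.
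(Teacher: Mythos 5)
Your proposal is correct and follows exactly the route the paper intends (the corollary is stated without a separate proof, being meant to follow immediately from Theorem \ref{main-thm}). You correctly pass to homology via the K\"unneth map and the composition compatibility, use Proposition \ref{MG=S} to replace $H_\ast(MG)$ by $H_\ast(\mc S)$, use the quasi-isomorphisms $L_\ast(\A)^{\otimes n} \to L_\ast(\A)[n]$ and $L_\ast(\A) \simeq CH_\ast(\A,\A)$ to identify the target, observe that the lax coherence becomes strict monoidal on homology precisely because the coherence maps are quasi-isomorphisms, and derive metric independence by restricting the parametrised family over $\Delta_1$ to its endpoints and integrating the one-form component to produce the chain homotopy.
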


If we start with a simply connected, compact, oriented, Riemannian manifold $M$, and let $\mc A=\Omega^\ast M$, the de Rham forms on $M$, then it is well known that $HH_\ast(\mc A, \mc A)$ is isomorphic to the cohomology $H^\ast (\mc L M)$ of the free loop space, $\mc L M$, of $M$. We thus get the next corollary.

\begin{cor}\label{string-topology-cor}
For a simply connected, compact, oriented, Riemannian manifold $M$, there is a map of props, $$ H_\ast(\mc S(n, m)) \to \text{Hom} (H^\ast (\mc L M)^{\otimes n}, H^\ast (\mc L M)^{\otimes m} ).$$
\end{cor}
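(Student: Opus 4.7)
The plan is to deduce this immediately from Corollary \ref{homology-cor} by specializing the Calabi-Yau elliptic space to $(M,\Omega^\ast M)$ and then invoking the classical identification of the Hochschild homology of the de Rham algebra with the cohomology of the free loop space.

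First, I would recall that $\mc A = \Omega^\ast(M,\C)$, equipped with $Q = \d$ and $\Tr(a) = \int_M a$, is a Calabi-Yau elliptic space (this is the basic example recalled in Section 2). Any choice of Riemannian metric on $M$ induces a Hermitian metric on $\mc A$ via the Hodge star, and the space of such metrics is contractible (it is convex), so in particular path-connected. Thus the hypotheses of Corollary \ref{homology-cor} are satisfied, and we obtain a metric-independent map of props
$$
H_\ast(\mc S(n,m)) \to \Hom\bigl( HH_\ast(\mc A,\mc A)^{\otimes n}, HH_\ast(\mc A,\mc A)^{\otimes m}\bigr).
$$

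Next, I would invoke Jones' theorem: for $M$ simply connected, there is a natural isomorphism
$$
HH_\ast(\Omega^\ast M,\Omega^\ast M) \iso H^\ast(\mc L M).
$$
Taking tensor powers gives $HH_\ast(\mc A,\mc A)^{\otimes n} \iso H^\ast(\mc L M)^{\otimes n}$ for every $n$, compatibly with the symmetric monoidal structures on both sides (both are tensor products of graded vector spaces). Transporting the map of props from Corollary \ref{homology-cor} through these isomorphisms yields the desired map
$$
H_\ast(\mc S(n,m)) \to \Hom\bigl( H^\ast(\mc L M)^{\otimes n}, H^\ast(\mc L M)^{\otimes m}\bigr).
$$

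The only point that requires any care is that the transport really does produce a map of props rather than just a collection of maps for each $(n,m)$. This is automatic: Jones' isomorphism is an isomorphism of graded vector spaces, and a prop map is just a family of linear maps $P(n,m) \to \Hom(V^{\otimes n}, V^{\otimes m})$ compatible with composition, disjoint union, and symmetric group action; these compatibilities transfer along any vector space isomorphism $HH_\ast(\mc A,\mc A) \iso H^\ast(\mc L M)$ applied in each slot. Thus there is no genuine obstacle, and the statement follows by combining Corollary \ref{homology-cor} with Jones' theorem.
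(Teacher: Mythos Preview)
Your proposal is correct and follows exactly the paper's approach: the corollary is stated immediately after the sentence ``it is well known that $HH_\ast(\mc A,\mc A)$ is isomorphic to the cohomology $H^\ast(\mc L M)$,'' and is deduced from Corollary~\ref{homology-cor} by specializing to $\mc A=\Omega^\ast M$ and invoking this isomorphism. Your additional remarks on the contractibility of the space of metrics and on the transport of the prop structure are sound elaborations, but the paper treats the result as an immediate consequence without further argument.
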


\begin{remk}
We will show in Theorem \ref{string-topology-thm} below, that the action from Corollary \ref{string-topology-cor} for a graph $\gamma\in G(n,m)$ whose chords are trees, coincides with the existing string topology operation on the homology of the free loop space $\mc L M$, induced by $\gamma$. Theorem \ref{main-thm} is thus a chain level version of Corollary \ref{string-topology-cor}, extending as well as lifting the string topology operations to a structure of a lax algebra over the prop of chains on the moduli space of Riemann surfaces. A recent extension of the string topology operations to the homology of the moduli space has been given by V. Godin, \cite{Go}.
\end{remk}

\begin{ex}
We would like to demonstrate the above proof in a concrete example. Consider the meta-graph in $\gamma\in MG(3,2)$ from Figure \ref{meta-graph}.
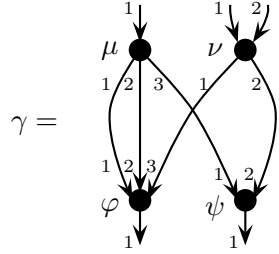
\begin{figure}
\[
\begin{pspicture}(-1,0)(3,3.5)
% \psgrid(0,0)(3,3.5)
 \psdot[dotsize=.3](.8,.8)    \psdot[dotsize=.3](2.2,.8)
 \psdot[dotsize=.3](.8,2.8)    \psdot[dotsize=.3](2.2,2.8)
 \rput(.4,2.8){$\mu$}      \rput(1.8,2.8){$\nu$}
 \rput(.4,.7){$\varphi$}     \rput(1.8,.7){$\psi$}
 \rput(-.6,1.8){$\gamma =$}
 \pscurve[arrows=->, arrowsize=.2](.8,2.8)(.8,2)(.8,.9)
 \pscurve[arrows=->, arrowsize=.2](.8,2.8)(.4,2)(.7,.8)
 \pscurve[arrows=->, arrowsize=.2](.8,2.8)(1.5,2)(2.1,.8)
 \pscurve[arrows=->, arrowsize=.2](2.2,2.8)(2.6,2)(2.3,.8)
 \pscurve[arrows=->, arrowsize=.2](2.2,2.8)(1.5,2)(.9,.8)
 \pscurve[arrows=->, arrowsize=.2](.8,.8)(.8,.6)(.8,.2)
 \pscurve[arrows=->, arrowsize=.2](2.2,.8)(2.2,.6)(2.2,.2)
 \pscurve[arrows=->, arrowsize=.2](.8,3.4)(.8,3)(.8,2.9)
 \pscurve[arrows=->, arrowsize=.2](2,3.4)(2,3.3)(2.1,2.9)
 \pscurve[arrows=->, arrowsize=.2](2.5,3.4)(2.5,3.3)(2.3,2.9)
 \rput(.65,3.35){\tiny$1$} \rput(1.85,3.35){\tiny$1$} \rput(2.35,3.35){\tiny$2$}
 \rput(.35,2.35){\tiny$1$} \rput(.65,2.35){\tiny$2$} \rput(1.05,2.35){\tiny$3$}
 \rput(1.65,2.35){\tiny$1$} \rput(2.35,2.35){\tiny$2$}
 \rput(.35,1.25){\tiny$1$} \rput(.65,1.25){\tiny$2$} \rput(.95,1.25){\tiny$3$}
 \rput(1.85,1.15){\tiny$1$} \rput(2.25,1.15){\tiny$2$}
 \rput(.65,.25){\tiny$1$} \rput(2.05,.25){\tiny$1$}
\end{pspicture}
\]
\caption{A meta-graph in $MG(3,2)$ with two levels, to be read from top to bottom. The outputs from $\mu$ and $\nu$ are plugged into the inputs from $\varphi$ and $\psi$.}
\label{meta-graph}
\end{figure}
This graph has two levels; the vertices in level $1$ are labeled by the graphs $\mu\in G(1,3)$ and $\nu\in G(2,2)$ whereas the vertices in level $2$ are labeled by the graphs $\varphi\in G(3,1)$ and $\psi\in G(2,1)$.  As indicated by the meta-graph, the first, second and third outputs of $\mu$ are connected to the first and second inputs of $\varphi$ and the first input of $\psi$, respectively.  Similarly, the first and second output of $\nu$ are connected to the third input of $\varphi$ and the second input of $\psi$, respectively. The graphs for $\mu$, $\nu$, $\varphi$ and $\psi$ are given in Figures \ref{mu}-\ref{psi}.

\begin{figure}
\[
\begin{pspicture}(0,-2)(6,3)
%\psgrid(0,-2)(6,3)
  \pscircle[linestyle=dotted](5,1.5){.9}  \pscircle[linestyle=dotted](2,1.5){.9}
  \pscircle[linestyle=dotted](3,-1){.9}
  \psframe[linecolor=white, fillstyle=solid, fillcolor=white](2.6,1.4)(3,1.6)
  \psframe[linecolor=white, fillstyle=solid, fillcolor=white](4,1.4)(4.4,1.6)
  \psframe[linecolor=white, fillstyle=solid, fillcolor=white](2.9,-.4)(3.1,0)
  \psframe[linecolor=white, fillstyle=solid, fillcolor=white](4.25,.85)(4.4,1)
  \psframe[linecolor=white, fillstyle=solid, fillcolor=white](3.6,-.65)(3.9,-.3)
  \pscircle(5,1.5){.8}  \pscircle(2,1.5){.8} \pscircle(3,-1){.8}
   \rput(.5,.2){$\mu=$}            \rput(2,1.5){\tiny$\mu_1$}
   \rput(5,1.5){\tiny$\mu_2$} \rput(3,-1){\tiny$\mu_3$}
   \rput(4.4,2.4){\tiny$\mu^1$}
  \pscurve(2.8,1.5)(3.3,1.4)(3.5,1.2)(3.7,.73) \pscurve(3.77,.45)(3.8,0)(3.68,-.6)
  \pscurve(4.2,1.5)(3.7,1.4)(3.5,1.2)
  \psline(3.5,1.2)(3,.4) \psline(3,.4)(3,-.2)
  \pscurve(3,.4)(4,.7)(4.4,1)
  \pscurve[linestyle=dotted](3.1,-.1)(3.15,.25)(3.8,.45)(4.4,.8)
  \pscurve[linestyle=dotted](2.9,-.1)(2.9,.5)(3.3,1.15)(2.9,1.4)
  \pscurve[linestyle=dotted](2.9,1.6)(3.2,1.6)(3.5,1.4)(3.8,1.6)(4.1,1.6)
  \pscurve[linestyle=dotted](3.6,-.3)(3.7,0)(3.7,.4)
  \pscurve[linestyle=dotted](3.8,-.6)(3.9,0)(3.9,.4)
  \pscurve[linestyle=dotted](4.1,1.4)(3.65,1.2)(3.8,.8)
  \pscurve[linestyle=dotted](4.2,1)(3.7,.7)(3.24,.65)(3.35,.7)(3.4,.8)(3.5,.9)(3.6,.7)
  \psline[linewidth=.1](5,.7)(5,1)
  \psline[linewidth=.1](3,-.2)(3,-.5)
  \psline[linewidth=.1](2.8,1.5)(2.5,1.5)
  \psline[linewidth=.1](3,.1)(2.7,.1)
\end{pspicture}
\]
\caption{The graph $\mu\in G(1,3)$}
\label{mu}
\end{figure}
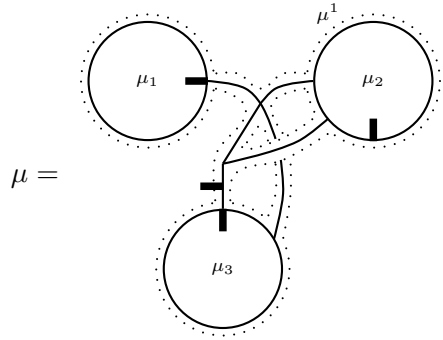
Here we have labeled the $i^{th}$ output components of a graph $\rho=\mu, \nu, \varphi, \psi$ using lower indices $\rho_i$. The $j^{th}$ input component of a graph is labeled by an upper index $\rho^j$, and the input boundary components are traced by a dotted line. For example, $\mu^1$ is the only input component of $\mu$, which is completely traced by the dotted line. Finally, the input and output marked points were denoted by thick external edges. These external edges are of length zero.
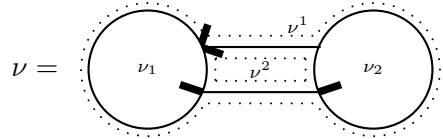
\begin{figure}
\[
\begin{pspicture}(0,0)(6,3)
% \psgrid(0,0)(6,3)
 \pscircle[linestyle=dotted](5,1.5){.9}  \pscircle[linestyle=dotted](2,1.5){.9}
 \psframe[linecolor=white, fillstyle=solid, fillcolor=white](2.6,1.06)(3,1.94)
 \psframe[linecolor=white, fillstyle=solid, fillcolor=white](4,1.06)(4.4,1.94)
 \psline[linewidth=.1](2.7,1.8)(2.8,2.1)
 \psline[linewidth=.1](2.7,1.8)(3,1.7)
 \psline(2.7,1.8)(4.3,1.8)  \psline(2.7,1.2)(4.3,1.2)
 \pscircle[fillstyle=solid,fillcolor=white](2,1.5){.8}
 \pscircle(5,1.5){.8}
  \rput(5,1.5){\tiny$\nu_2$}  \rput(2,1.5){\tiny$\nu_1$}
 \rput(.5,1.5){$\nu=$}
 \psline[linestyle=dotted](2.9,1.35)(4.1,1.35)  \psline[linestyle=dotted](4.1,1.65)(2.9,1.65)
 \rput(3.5,1.5){\tiny$\nu^2$}  \rput(4,2.1){\tiny$\nu^1$}
 \psline[linestyle=dotted](2.8,1.05)(4.2,1.05) \psline[linestyle=dotted](2.8,1.95)(4.2,1.95)
 \psline[linestyle=dotted](2.9,1.65)(2.9,1.35)  \psline[linestyle=dotted](4.1,1.65)(4.1,1.35)
 \psline[linewidth=.1](4.27,1.2)(4.57,1.3)
 \psline[linewidth=.1](2.73,1.2)(2.43,1.3)
\end{pspicture}
\]
\caption{The graph $\nu\in G(2,2)$}
\label{nu}
\end{figure}
\begin{figure}
\[
\begin{pspicture}(0,0)(6,3)
% \psgrid(0,0)(6,3)
 \pscircle[linestyle=dotted](4.8,1.5){.7}  \pscircle[linestyle=dotted](2,1.5){.9}
 \psframe[linecolor=white, fillstyle=solid, fillcolor=white](2.6,1.06)(3,1.94)
 \psframe[linecolor=white, fillstyle=solid, fillcolor=white](4,1.06)(4.4,1.94)
 \pscircle(4.8,1.5){.6}  \pscircle(2,1.5){.8}
 \pscircle[linestyle=dotted](4.8,1.5){.5}
 \rput(4.8,1.5){\tiny$\varphi^3$}  \rput(2,1.5){\tiny$\varphi_1$}
 \psline(2.73,1.8)(4.3,1.8)  \psline(2.7,1.2)(4.3,1.2)
 \rput(.5,1.5){$\varphi=$}
 \psline[linestyle=dotted](2.9,1.35)(4.1,1.35)  \psline[linestyle=dotted](4.1,1.65)(2.9,1.65)
 \rput(3.5,1.5){\tiny$\varphi^2$}  \rput(4,2.1){\tiny$\varphi^1$}
 \psline[linestyle=dotted](2.8,1.05)(4.2,1.05) \psline[linestyle=dotted](2.8,1.95)(4.2,1.95)
 \psline[linestyle=dotted](2.9,1.65)(2.9,1.35)  \psline[linestyle=dotted](4.1,1.65)(4.1,1.35)
 \psline[linewidth=.1](4.27,1.2)(4.6,1.3)
 \psline[linewidth=.1](2.73,1.2)(2.43,1.3)
 \psline[linewidth=.1](3.1,1.8)(3.1,1.5)
 \psline[linewidth=.1](2,.7)(2,.4)
\end{pspicture}
\]
\caption{The graph $\varphi\in G(3,1)$}
\label{varphi}
\end{figure}
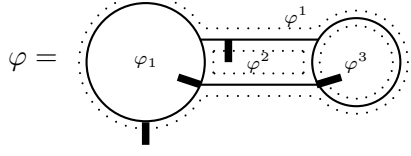

Every edge in any of the graphs $\mu, \nu, \varphi, \psi$ has furthermore a given length. These length were only exhibited in the graph for $\psi$ and suppressed in $\mu$, $\nu$ and $\varphi$ for better readability. We see that the total length for the inputs and output of $\psi$ are as follows:
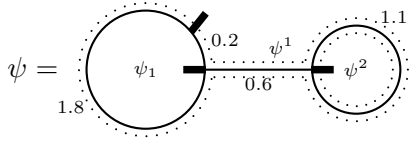
\begin{figure}
\[
\begin{pspicture}(0,0)(6,3)
% \psgrid(0,0)(6,3)
 \pscircle[linestyle=dotted](4.8,1.5){.7}  \pscircle[linestyle=dotted](2,1.5){.9}
 \psframe[linecolor=white, fillstyle=solid, fillcolor=white](2.6,1.4)(3,1.6)
 \psframe[linecolor=white, fillstyle=solid, fillcolor=white](4,1.4)(4.4,1.6)
 \pscircle(4.8,1.5){.6}  \pscircle(2,1.5){.8}
 \pscircle[linestyle=dotted](4.8,1.5){.5}
 \rput(4.8,1.5){\tiny$\psi^2$}  \rput(2,1.5){\tiny$\psi_1$}
 \rput(.5,1.5){$\psi=$}
  \rput(3.8,1.8){\tiny$\psi^1$}
 \psline(2.8,1.5)(4.2,1.5)
 \psline[linestyle=dotted](2.9,1.4)(4.1,1.4)  \psline[linestyle=dotted](4.1,1.6)(2.9,1.6)
 \psline[linewidth=.1](4.2,1.5)(4.5,1.5)
 \psline[linewidth=.1](2.8,1.5)(2.5,1.5)
 \psline[linewidth=.1](2.6,2)(2.8,2.25)
 \rput(3.05,1.9){\tiny$0.2$}    \rput(3.5,1.3){\tiny$0.6$}
 \rput(5.3,2.2){\tiny$1.1$}    \rput(1,1){\tiny$1.8$}
\end{pspicture}
\]
\caption{The graph $\psi\in G(2,1)$}
\label{psi}
\end{figure}
\begin{eqnarray*}
l(\psi^1)&=& 0.2+ 0.6 + 1.1 + 0.6 + 1.8 = 4.3 \\
l(\psi^2)&=& 1.1\\
l(\psi_1)&=& 1.8 + 0.2 = 2
\end{eqnarray*}
Let's furthermore assume that the length for the boundary components of the other graphs are given by:
\begin{eqnarray*}
& l(\mu^1)= 9.5, \quad l(\mu_1)= 2.1, \quad l(\mu_2)= 1.9, \quad l(\mu_3)= 1.8, \\
& l(\nu^1)= 6.3, \quad l(\nu^2)= 2.5, \quad l(\nu_1)= 1.7, \quad l(\nu_2)= 2.2, \\
& l(\varphi^1)= 4.5, \quad l(\varphi^2)= 1.9, \quad l(\varphi^3)= 1.5, \quad l(\varphi_1)= 2, \\
& l(\psi^1)= 4.3, \quad l(\psi^2)= 1.1, \quad l(\psi_1)= 2.
\end{eqnarray*}
Notice that although $l(\mu_2)=l(\varphi^2)$, we cannot glue $\mu$ to $\varphi$, since $l(\mu_1)\neq l(\varphi^1)$.

Following step $1$ of the proof of Theorem \ref{main-thm}, we will next describe the map $\Phi$ in the current example. In the case that $\Phi:\Delta_4\times \Delta_2\times \Delta_5\times MG(3,2) \to \Gamma (3+4+2+5=14,2)$, we need to find a graph with $14$ incoming external edges and $2$ outgoing external edges, given the data of
\begin{eqnarray*}
 x_1&=&\{0\leq t_{1,1}\leq t_{1,2}\leq t_{1,3}\leq t_{1,4}\leq 1\}\in \Delta_4,\\
 x_2&=&\{0\leq t_{2,1}\leq t_{2,2}\leq 1\}\in \Delta_2, \\
 x_3&=&\{0\leq t_{3,1}\leq t_{3,2}\leq t_{3,3}\leq t_{3,4}\leq t_{3,5}\leq 1\}\in \Delta_5.
\end{eqnarray*}
This is done in the following way. First, attach incoming external edges of length zero to $\mu^1$, $\nu^1$ and $\nu^2$ determined by the rescaled numbers $t_{i,j}$. For example, since $\mu^1$ has a total length of $9.5$, we need to attach incoming edges to the positions $9.5\cdot t_{1,1}, 9.5\cdot t_{1,2}, 9.5\cdot t_{1,3}$ and $9.5\cdot t_{1,4}$. Next, we need to glue the outgoing components of $\mu$ and $\nu$ to the corresponding incoming components of $\varphi$ and $\psi$ determined by the meta-graph $\gamma$ in Figure \ref{meta-graph}. Since the lengths $l(\mu_2)=l(\varphi^2)$, we may immediately identify those boundary components. In the other cases, such as $l(\mu_1)\neq l(\varphi^1)$, we first need to rescale the output $\mu_1$ to the length $l(\varphi^1)=4.5$, and then identify these boundary  components. After repeating this for all attached boundary components, we end up with a graph in $\Gamma(14,2)$ such as in Figure \ref{composed}.
\begin{figure}
\[
\begin{pspicture}(0,-2)(6,3)
%\psgrid(0,-2)(6,3)
 \pscircle(4.8,2){.6}  \pscircle(2,2){.8}
 \psline(2.73,2.3)(4.3,2.3)  \psline(2.73,1.7)(4.3,1.7)
 \pscircle(4.8,-1){.6}  \pscircle(2,-1){.8}
 \psline(2.8,-1)(4.2,-1)
  \psline(2.5,.8)(2,.3) \psline(2,.3)(2,-.2)
  \pscurve(2,.3)(3,.7)(3.4,1.2)(3.6,1.6) \psline(3.66,1.8)(3.8,2.3)
  \pscurve(2.5,.8)(2.9,1.1)(3.2,1.6)       \psline(3.26,1.8)(3.4,2.3)
  \pscurve(2.5,.8)(2.2,1)(2,1.2)
  \pscurve(2.5,.8)(2.7,.74)(2.8,.65) \pscurve(2.9,.55)(3.2,0)(3.4,-1)
  \pscurve(4.2,-1)(4.45,-.8)(4.55,-.55)  \pscurve(4.6,-.35)(4.65,.5)(4.6,1.35)
                        \pscurve(4.58,1.55)(4.55,1.7)(4.3,1.7)
  \pscurve(5.4,-1)(5.15,-.8)(5.05,-.55)  \pscurve(5,-.35)(4.95,.5)(5,1.35)
                        \pscurve(5.0,1.55)(5.1,2.2)(5.2,2.41)
   \psline[linewidth=.1](5.2,2.4)(5.2,2.1)
   \psline[linewidth=.1](5.2,2.4)(4.93,2.35)
   \psline[linewidth=.1](2.8,-1)(2.5,-1)
   \psline[linewidth=.1](2.73,1.7)(2.43,1.8)
   \psline[linewidth=.1](2,0)(1.7,0)
   \psline[linewidth=.02](3.8,-1)(3.8,-.8)
   \psline[linewidth=.02](1.2,2)(1,2)
   \psline[linewidth=.02](3.4,1.2)(3.55,1.05)
   \psline[linewidth=.02](3.4,-1)(3.25,-.85)
   \psline[linewidth=.02](4.8,-1.6)(4.8,-1.4)
   \psline[linewidth=.02](4.95,.5)(4.75,.5)
   \psline[linewidth=.02](4.63,.8)(4.43,.8)
   \psline[linewidth=.02](4.63,.2)(4.43,.2)
   \psline[linewidth=.02](4.8,-.4)(4.8,-.6)
   \psline[linewidth=.02](5,1.2)(5.2,1.2)
   \psline[linewidth=.02](5.35,1.8)(5.17,1.86)
\end{pspicture}
\]
\caption{The composed operation in $X(4,2,5;2)$, also denoted by $\gamma$}
\label{composed}
\end{figure}
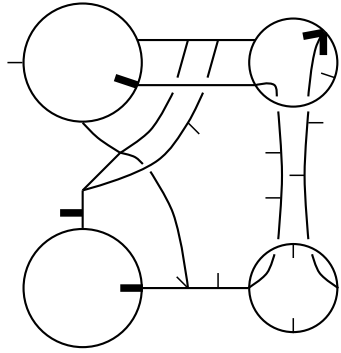
The graph in Figure \ref{composed} depicts a particular stratum of $X(4,2,5;2)$. A different stratum is obtained whenever two vertices in the composed graph come together or cross.

The operation of Theorem \ref{main-thm} consists for the described stratum in Figure \ref{composed} in attaching the Hochschild elements $\mathcal A^{\otimes (4+1)}\otimes \mathcal A^{\otimes (2+1)}\otimes \mathcal A^{\otimes (5+1)}$ at the incoming external edges, and applying the operation from \cite{C3} to the ribbon graph $\gamma'$, which is obtained by removing the two output boundary circles. This is a map $\mathcal A^{\otimes 13}\to \Omega^\ast (\text{Met}(\gamma'))\otimes \mathcal A ^{\otimes 5}$. Pulling back to the stratum in Figure \ref{composed}, we get $\mathcal A^{\otimes 13}\to \Omega^\ast (\text{Met}(\gamma))\otimes \mathcal A ^{\otimes 5}$. The action $\Omega^\ast(\text{Met}(\gamma))\otimes C_\ast(\text{Met}(\gamma))\to C_\ast(\text{Met}(\gamma))$ applied to the initially given chain on $\text{Met}(\gamma)$ induces the map
\begin{eqnarray*}
\mathcal A^{\otimes 14}\otimes C_\ast(\text{Met}(\gamma)) &\to& \Omega^\ast (\text{Met}(\gamma))\otimes C_\ast(\text{Met}(\gamma))\otimes \mathcal A^{\otimes 1} \otimes \mathcal A ^{\otimes 5}\\
&\to & C_\ast(\text{Met}(\gamma))\otimes \mathcal A^{\otimes 6}\\
&\to& C_\ast(\Delta_3\times \Delta_1)\otimes \mathcal A^{\otimes 4}\otimes \mathcal A^{\otimes 2}\\
&\to & L(\mathcal A)[2].
\end{eqnarray*}
The last map comes from pushing $\gamma$ forward to the positions of the vertices attached to the output circles. It is worth mentioning, that since both output marked points are attached to endpoints of $\gamma'$, we don't need to apply the unit $1_{\mathcal A}$ to those output marked points. This concludes the explicit description of Theorem \ref{main-thm} in this example.
\end{ex}

\section{String Topology}\label{string-topology}

We now relate the results of Theorem \ref{main-thm} to the string topology operations on the homology of free loop space of a simply connected, compact, and oriented Riemannian manifold $M$. More precisely, we show that when the elliptic space at hand is the de Rham complex $\mathcal A= \Omega^\ast M$, after passing to the homology, our operations reduce to the string topology operations, originally defined by Chas and Sullivan in \cite{CS1, CS2} and \cite{Su1, Su2}. We use the interpretation and tools used in \cite{CG} for dealing with string topology.

Consider the Calabi-Yau elliptic space $(M,\mathcal A)$, where $\A$ is the space of complex valued de Rham forms of a simply connected, compact, and oriented Riemannian manifold $M$. Thus, the space $\mathcal A= \Omega^\ast (M,\C)$ is the de Rham complex on $M$ with $Q$ the exterior derivation, and trace $\Tr(a)=\int_M a$.  It is well known that the Hochschild homology $HH_\ast(\mathcal A)$ is isomorphic to the cohomology $H^\ast(\mathcal L M)$ of the free loop space, $\mathcal L M$, of $M$. More generally, Definition \ref{LA[m]} implies that $H_\ast(L_\ast(\mathcal A)[n])$ is isomorphic to $H^\ast(\mathcal L M)^{\otimes n}$. It is our aim to identify the operations on the free loop space from Theorem \ref{main-thm} which are given by zero homology classes in $H_0(G(n,m))$. 

The operations we construct depend on the choice of a metric on the Calabi-Yau elliptic space $\Omega^\ast(M)$, but, as we have seen, the homotopy class of the resulting algebraic structure is independent of the metric.

We now examine the operations given by $H_0(G(n,m))$. Let $\gamma\in G(n,m)$ be a ribbon graph such that all its chords are trees. By a chord we mean the complement of the disjoint output circle in the ribbon graph. In order to obtain an operation $[\gamma ]_\ast:H_\ast(L_\ast(\mathcal A)[n])\to H_\ast(L_\ast(\mathcal A)[m])$ from $\gamma$, we start by considering a subdivision of the stratification of $G(n,m)$ such that the point $\gamma$ constitutes a zero dimensional  stratum. This substratification allows us to define a zero chain $[\gamma ]\in C_0 (G(n,m))$ by taking the triple $(\{\gamma\},\{\gamma\}\hookrightarrow G(n,m),1\in\Omega^0\{\gamma\})$, cf. proposition \ref{C-PL}. By Theorem \ref{main-thm}, we get an induced homology operation $L_\ast(\mathcal A)[n]\to L_\ast(\mathcal A)[m]$, and thus an operation on homology, $[\gamma ]_\ast:H_\ast(L_\ast(\mathcal A)[n])\to H_\ast(L_\ast(\mathcal A)[m])$.

For the ribbon graph $\gamma\in G(n,m)$, let $\tilde{\gamma}$ denote the graph obtained by collapsing all the chord pieces to a point $\tilde{\gamma}=\gamma/\sim$.  It was shown in \cite{CG}, that $\tilde{\gamma}$ labels a string topology operation on the homology of the free loop space, $H^\ast(\mathcal L M)^{\otimes n}\to H^\ast(\mathcal L M)^{\otimes m}$. The following theorem states that the operation on $H_\ast(L_\ast(\mathcal A)[n])$ given by a ribbon graph $\gamma\in G(n,m)$ with contractible chords coincides, as a zero dimensional homology class in the space of graphs, with that given by $\tilde{\gamma}=\gamma/ \sim$ in \cite{CG} from string topology.

\begin{thm}\label{string-topology-thm}
In the above notation, the following diagram commutes up to sign, where the vertical maps are isomorphisms:
\begin{equation*}\label{commut-diag-for-string-topology}
\xymatrix{  H_\ast(L_\ast(\mc A)[n])
\ar[rr]^{[\gamma ]_\ast} \ar[d] && H_\ast(L_\ast(\mc A)[m])  \ar[d] \\
 H^\ast(\mc L M)^{\otimes n}  \ar[rr]^{\text{operation from } \tilde{\gamma}} && H^\ast(
\mc L M)^{\otimes m} }
\end{equation*}
\end{thm}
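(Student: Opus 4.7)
The plan is to unpack both sides of the diagram explicitly and reduce the statement, edge by edge of the chord tree, to the already-known identification of the Gerstenhaber cup product with the loop product (as in Felix--Thomas \cite{FT}). First I would compute the top operation $[\gamma]_\ast$ at the chain level. Since $[\gamma]\in C_0(G(n,m))$ is supported at a single point of a zero-dimensional stratum, the push-forward through $\Phi$ in Step 1 of the proof of Theorem \ref{main-thm} takes $[\gamma]$ to the corresponding zero-chain in $\Gamma(n+\sum k_i,m)$ given by the fixed metrised ribbon graph; all form-theoretic pairings from Step 3 then reduce to evaluation of the heat-kernel form on $\gamma'$ (the graph with output circles removed) at a single metric, followed by cap-product with this zero-chain and push-forward to the product of output simplices.

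Next I would analyze what the chord structure contributes. Each chord is, by hypothesis, a tree; the construction of \cite{C3} assigns to each edge of the chord tree the heat propagator $e^{-tH}$ of a specified length, and to each internal vertex the multiplication $\Omega^\ast(M)^{\otimes k}\to \Omega^\ast(M)$ combined with the trace $\Tr$.  At each internal vertex the Hermitian pairing $\Tr(a\cdot {\ast}b)$ together with the heat kernel is precisely the analytic expression for the Poincar\'e duality chain representing the Thom class of the diagonal $M\to M\times M$. After integrating out the chord's internal structure one obtains (up to contractible choice, thanks to the metric-independence clause of Theorem \ref{main-thm}) the cochain analogue of the Thom--collapse map along the evaluation-at-a-point fibration $\mc LM\to M$. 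This is exactly the ingredient used to define the string topology operation associated to the tree $\tilde\gamma$ in \cite{CG}: each tree chord becomes a single transversal intersection constraint on the loops at the point where the tree has been collapsed.

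Third, I would organize these local identifications into the statement of the theorem. The vertical isomorphisms are the Jones-type quasi-isomorphisms $L_\ast(\Omega^\ast M)[n]\simeq CH_\ast(\Omega^\ast M,\Omega^\ast M)^{\otimes n}\simeq H^\ast(\mc LM)^{\otimes n}$ (using Proposition \ref{LA-quasi-isos} together with the standard identification for simply connected $M$). Under these identifications, the marked points on the output circles of $\gamma$ become marked points on the output loops, and the push-forward in Step 3 becomes the pull-back along the evaluation maps. Combining this with the edge-wise identification from the previous paragraph, one reads off the operation on $H^\ast(\mc LM)^{\otimes n}$ as the composition of pull-back along input evaluations, cup product at the collapsed vertex of $\tilde\gamma$, and push-forward along output evaluations, which is the Cohen--Godin definition of the operation indexed by $\tilde\gamma$. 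The sign discrepancy is tracked through the identification of determinantal orientations.

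The main obstacle will be the chain-level transition from the analytic/heat-kernel picture to the topological Thom--collapse picture: specifically, verifying that the Hermitian-metric data entering our construction at an internal chord vertex presents, in cohomology, the same class as the Thom class of the diagonal used in \cite{CG}. The simply connected hypothesis, together with the metric-independence part of Theorem \ref{main-thm}, lets one compute in a convenient small model (for instance, Sullivan's minimal model or the harmonic representative of the diagonal), and the key computation is the $t\to 0$ limit of the heat kernel, which is exactly the step carried out by Felix and Thomas in the pair-of-pants case. Once this is in place for a single tree edge, the full statement follows by induction on the number of edges of the chord tree, since the contraction of an edge is already identified, via the equivalence relation defining $MG$, with the composition of the corresponding elementary operations.
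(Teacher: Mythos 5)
Your proposal correctly identifies the central analytic ingredient that the paper uses: interpreting the heat kernel at an internal chord vertex as a chain-level representative of Poincar\'e duality for the diagonal, so that the $t\to 0$ limit recovers the Thom class of the diagonal, and invoking the Felix--Thomas comparison of this with the string topology coproduct. You also correctly observe that by the homotopy-invariance clause of Theorem~\ref{main-thm} one is free to choose a convenient representative. This much matches the spirit of the paper's Steps~2 and~3.

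However, your concluding reduction has a real gap. You propose to prove the statement ``by induction on the number of edges of the chord tree, since the contraction of an edge is already identified, via the equivalence relation defining $MG$, with the composition of the corresponding elementary operations.'' This conflates two different things. The equivalence relation on $MG$ glues ribbon graphs sitting at \emph{distinct vertices of a meta-graph} along outgoing/incoming boundary \emph{circles} whose discrepancies all vanish; it has nothing to do with contracting an interior chord edge of a single ribbon graph $\gamma\in G(n,m)$. Contracting a chord edge to length~$0$ is a degeneration to a boundary stratum of $\Met(\gamma)$, not a composition in the prop. In general a tree chord inside $\gamma$ cannot be factored as a composite of single-edge chords living on separate pieces of a meta-graph, because the chord never touches an intermediate boundary circle. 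So the induction step, as written, does not go through, and you do not in fact reduce to the pair-of-pants case.

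The paper avoids this problem with a different decomposition: since the operation only depends on the class in $H_0(G(n,m))$, one first replaces the tree chord by a \emph{star-shaped} chord (one internal vertex, all legs meeting there). This makes each chord a single multiplication $\mu^{\circ(q-k)}$ followed by a single ``coproduct'' given by the heat kernel at $t=0$, valued in currents on $M^{\times q}$. One then sets up the explicit intermediate models $(B\A)^{\otimes p}\otimes\A^{\otimes p}$ and $(B\A)^{\otimes q}\otimes\A^{\otimes q}$ coming from Proposition~\ref{LA-quasi-isos}, a current-valued degenerate map $f_0=g\circ\rho$, and then compares $g$ directly with the Cohen--Godin operation, modeled via the Lambrechts--Stanley Poincar\'e duality algebra and the two-sided bar construction $B(A,A,A)$ as in Felix--Thomas, using functoriality and a zig-zag of quasi-isomorphisms (with a spectral sequence argument to conclude). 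The homotopy you invoke (varying edge lengths) appears, but as a step connecting $f$ to $f_0$, not as an inductive reduction via $MG$-composition. You would need to replace your final paragraph by a reduction to star-shaped chords and an explicit model for the Cohen--Godin side, rather than the edge-by-edge induction.
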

\begin{proof}
Without loss of generality, we will assume that each chord is not only a tree, but furthermore of star shape, as in Figure \ref{star}. This may be assumed, since as a zero chain every tree is homologous to one which is star shaped and the homology operations from Theorem \ref{main-thm} and from \cite[section 2]{CG} only depend on the zero homology class the graphs define.
\begin{figure}
\[
\begin{pspicture}(0,0)(2,2)
% \psgrid(0,0)(2,2)
 \psdot(1,1)
 \psline(1,0)(1,2)          \psdot(1,2)    \psdot(1,0)
 \psline(.6,.6)(1.6,1.6)  \psdot(.6,.6)  \psdot(1.6,1.6)
 \psline(.2,1)(1.5,1)      \psdot(.2,1)   \psdot(1.5,1)
 \psline(.4,1.6)(1.8,.2)  \psdot(.4,1.6) \psdot(1.8,.2)
 \end{pspicture}
\]
\caption{A star shaped chord}
\label{star}
\end{figure}
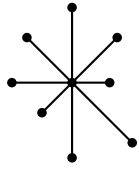

Let us recall again the notation $\A=\Omega^\ast (M,\C)$, which is used throughout this section.
The proof consists in checking the commutativity of the following diagram:
\begin{equation}\label{expanded-commut-diag}
\xymatrix{
H_\ast(L_\ast(\A )[n]) \ar[r]^{[\gamma ]_\ast} &  H_\ast( L_\ast(\A )[m])  \\
H_\ast(B\A  ^{\otimes p} \otimes \A ^{\otimes p}) \ar[r]^{f_\ast} \ar[d]_{\cong} \ar[u]^{\cong} &  H_\ast(B\A  ^{\otimes q} \otimes \A ^{\otimes q})  \ar[d]_{\cong} \ar[u]_{\cong} \\
H_\ast(B\A  ^{\otimes q} \otimes \A ^{\otimes q}) \ar[r]^{g_\ast} \ar[d]_{\cong}  & H_\ast(B\A  ^{\otimes q} \otimes \mathcal C(M^{\times q}))  \ar[d]^{\cong} \\
H^\ast(\mc L M)^{\otimes n}  \ar[r]^{\tilde{\gamma}} &  H^\ast(\mc L M)^{\otimes m} }
\end{equation}
Here, $\mathcal C(M^{\times q})$ denotes the space of currents on $M^{\times q}$, and $p$ and $q$ are determined by the number of special points of the corresponding models for the free loop spaces from Proposition \ref{LA-quasi-isos} with respect to the ribbon graph $\gamma$. Note that all vertical arrows in diagram \eqref{expanded-commut-diag} are isomorphisms.

The proof will proceed in three steps, each corresponding to one of the three squares in diagram \eqref{expanded-commut-diag}. In the first step, we review a convenient quasi-isomorphic model for $L_\ast(\A )[n]$ and $L_\ast(\A)[m]$ from Proposition \ref{LA-quasi-isos}. This model is given by cutting each input and each output of the ribbon graph $\gamma$ into pieces of edges determined by $\gamma$. With this, we attach a copy of the bar construction on $\A$ to intervals and glue them together using one copy of $\A$ at each vertex. Thus, we obtain the space $(B\A ) ^{\otimes p} \otimes \A ^{\otimes p}$ for the input and  $(B\A ) ^{\otimes q} \otimes \A ^{\otimes q}$ for the output of $\gamma$. We then describe the map $f_\ast$, which makes the top diagram in \eqref{expanded-commut-diag} commute.

In the second step, we use a homotopy, which contracts the chord edges in the graph $\gamma$ to a point. Since the operation associated to a chord edge comes from the application of the heat kernel $K(x,y,l)$ at the length $l$ of the edge, and $K(x,y,0)\in \mathcal C(M^{\times 2})$ becomes a current, we see that the contracted operation at $l=0$ has an image involving currents on products of $M$. We obtain the map $g_\ast$, making the middle diagram of \eqref{expanded-commut-diag} commute, by multiplying the forms on the contracted chord edges, and then applying a chain level version of Poincar\'e duality induced by the heat kernel at length zero.

Finally, in the third step, we briefly recall the string topology operation induced by $\gamma$ from \cite{CG}, given by maps as in the bottom row of diagram \eqref{expanded-commut-diag}. We show the commutativity of the bottom diagram of \eqref{expanded-commut-diag} by adapting an argument of Felix and Thomas \cite{FT} on the identification of the string topology product.  

{\bf Step 1:} %
We start by using the model for $L_\ast(\A )$ discussed in Proposition \ref{LA-quasi-isos}. Note that, starting at the initial input marked point, each input component of the graph $\gamma$ can be traced along a sequence of intervals meeting in a linear order at connecting vertices which can be input or output marked points or vertices of the graph. Each such interval is either a piece of one of the output circles or a segment within a chord's edge. Let us refer to the intervals which lie on a chord as chord intervals. Proposition \ref{LA-quasi-isos} applied to this subdivision of the input circle may be viewed as attaching a copy of the tensor algebra $B\A=T(s\A)$ to every interval and a copy of $\A $ to each connecting vertex. Since these intervals have given fixed lengths, we may look at the total length $s^i_1\leq \cdots\leq s^i_{r_i}$, where $s^i_j$ is the sum of the length of the first $j$ intervals on the $i^{th}$ input boundary component starting at the input marked point. There are exactly $r_i$ intervals on the $i^{th}$ input boundary. Note that when both sides of a chord interval appear in the input boundary component, then two different copies of $B\A$ appear, one for each side of the chord interval.

Proposition \ref{LA-quasi-isos} models the $i^{th}$ input of $\gamma$ as $(B\A ) ^{\otimes r_i} \otimes \A ^{\otimes r_i}$. Using Definition \ref{LA[m]}, we obtain $(B\A ) ^{\otimes p} \otimes \A ^{\otimes p}$, where $p=\sum^n_{i=1} r_i$, for the total input $L_\ast(\A )[n]$. Similarly, we may apply Proposition \ref{LA-quasi-isos} to each of the $m$ output boundary components of $\gamma$ to obtain a model $(B\A ) ^{\otimes q} \otimes \A ^{\otimes q}$ for $L_\ast(\A )[m]$. Note, that the difference $p-q$ is exactly the number of chord intervals in $\gamma$. Thus, we obtain quasi-isomorphic models for the domain and range of the top row map in diagram \eqref{expanded-commut-diag}. We also identify the map $f$, which makes the following diagram commute.
\begin{equation*}
\xymatrix{
L_\ast(\A )[n] \ar[r]^{[\gamma ]} &   L_\ast(\A )[m]  \\
B\A  ^{\otimes p} \otimes \A ^{\otimes p} \ar[r]^{f}  \ar[u]^{\cong} &  B\A  ^{\otimes q} \otimes \A ^{\otimes q}   \ar[u]_{\cong} }
\end{equation*}

For an element $\alpha\in B\A  ^{\otimes p} \otimes \A ^{\otimes p}$, we obtain $\alpha\otimes c\in L_\ast(\A )[n] $, where $c$ from Proposition \ref{LA-quasi-isos} is a chain that is given by the constant $0$-form $1$ on a particular substratum  of $\Delta_k$ and zero elsewhere. By construction, this substratification is compatible with the stratification chosen for $[\gamma ]$. The operation defined by $[\gamma ]$ in Theorem \ref{main-thm} applies the map from Theorem \ref{Kevins-map} to the chord pieces and projects the result to the simplicies associated to the outputs. We see that the result of applying the operation from $[\gamma]$ in fact lies in the image of $B\A  ^{\otimes q} \otimes \A ^{\otimes q}$. The map $f$ is obtained by applying Theorem \ref{Kevins-map} to those factors $B\A $ that lie on the chord intervals, and applying the identity on all other tensor factors $B\A$ or $\A$.

{\bf Step 2:} %
In this step, we contract the chord edges to a point. The associated factors of $B\A$ may then be replaced by single factors of $\A$. We then describe the map $g$ making the middle diagram in \eqref{expanded-commut-diag} commute.

For any two choices of positive lengths $l_1$ and $l_2$ of a chord edge, we have a $1$-chain of ribbon graphs whose length on the chord edge varies between $l_1$ and $l_2$, and remains fixed otherwise. Interpreting this $1$-chain as a homotopy between its endpoints shows, that the actions of the chord diagram from Theorem \ref{main-thm}, interpreted via step $1$ as maps $f_{l_1}, f_{l_2}:B\A^{\otimes p}\otimes \A^{\otimes p}\to B\A^{\otimes q}\otimes \A^{\otimes q}$, become equal on homology. We now consider the action induced by varying the length from $0$ to $l_2$. Since the heat kernel becomes singular in this case, we can see that the map $K$ from Theorem \ref{Kevins-map} now takes values in currents $\mathcal C(M^{\times m})$ instead of De Rham forms $\A^{\otimes m}$. If we contract all chord edges for the map $f$ in step $1$, we obtain a homotopy between the action of $f$, and a contracted map $f_0:B\A^{\otimes p}\otimes \A^{\otimes p}\to B\A^{\otimes q}\otimes \mathcal C(M^{\times q})$. Explicitly, the map $f_0$ places the heat kernel $K(x,y,0)$ to the chord edge of length $0$ when the input $1_{B\A}$ is applied on the chord, and vanishes in all other cases.

To obtain the wanted map $g$, we have to complete the following diagram,
\begin{equation}\label{length-to-zero}
\xymatrix{
B\A ^{\otimes p} \otimes \A^{\otimes p} \ar[r]^{f_0\quad} \ar[d]_{\rho} &  B\A  ^{\otimes q} \otimes \mathcal C(M^{\times q}) \ar[d]^{=} \\
B\A ^{\otimes q} \otimes \A ^{\otimes q} \ar[r]^{g\quad}  &  B\A  ^{\otimes q} \otimes \mathcal C(M^{\times q}) }
\end{equation}
First, we define the quasi-isomorphism $\rho:B\A  ^{\otimes p} \otimes \A ^{\otimes p} \to B\A  ^{\otimes q} \otimes \A ^{\otimes q}$, where $q= (p-$number of intervals on the chord edges$)$, which is equal to the total number of intervals for all output components. $\rho$ is defined by
\begin{eqnarray*}
\rho:\cdots \otimes \A\otimes B\A\otimes \A\otimes \cdots &\to & \quad\quad\cdots\otimes \A\otimes \cdots \\
\rho (\cdots\otimes a \otimes b \otimes a' \otimes \cdots)&:= &
\bigg\{ \begin{matrix}
\cdots\otimes (a\cdot a')\otimes \cdots& \text{, if } b=1_{B\A}\\
0 & \text{, otherwise,} 
\end{matrix}
\end{eqnarray*}
where we apply the above to any factor of $B\A$ corresponding to a chord interval of $\gamma$. Thus, in the image of $\rho$, we only get a single copy of $\A$  attached to each chord interval. It may readily be seen that $\rho$ is a chain map, and furthermore a quasi-isomorphism.

Since both $\rho$ and $f_0$ are non-vanishing only on terms $1_{B\A}$ on the chord edges, we may see that it $f_0$ can be factored as $f_0=g\circ\rho$. More explicitly, we define the map $g$ to first multiply all De Rham forms which are attached to the same chord, and apply to this the coproduct given by the heat kernel at length zero the same amount of times. Thus, for a contracted chord with $u$ copies of $\A$ attached, we obtain the operation $\A^{\otimes u}\to \A\to\A^{\otimes u}$ by first multiplying $a_1\otimes \cdots \otimes a_u\mapsto a:= \pm a_1\wedge \cdots \wedge a_u$, and then comultipliying using the heat kernel $a(x)\mapsto \pm \int_M a(x)\wedge K(x,y_1,0)\wedge\cdots\wedge K(x,y_u,0)\in \mathcal C(M^{\times u})$. This is just an explicit description of the action of the map $K$ from Theorem \ref{Kevins-map} at length zero, and thus makes diagram \eqref{length-to-zero} commute. 

Since for each $t\geq 0$, the map $a(x)\mapsto \int_M a(x)K(x,y,t)$ is a chain level representation of the coproduct on cohomology obtained by transporting the coproduct on homology via Poincar\'e duality, we see that $g$ applies product and then coproduct to the De Rham forms which are given at the contacted chords. Furthermore, all the copies of $B\A$ associated to the circle edges remain unchanged, but are only rearranged according to the combinatorics of inputs and outputs of the graph $\gamma$. Using this description, $g$ may now be identified with the string topology operations in the next step.

{\bf Step 3:} %
Let us briefly recall the operations associated to a ribbon graph
with treelike chords from \cite[section 2]{CG}. Let $Maps(\tilde{\gamma},M)$
denote the space of maps from $\tilde{\gamma}$ to $M$, for which each chord
component maps to the same point in $M$. Then, there are two maps
that restrict to the incoming and outgoing components,
respectively, $\rho_{in}:Maps(\tilde{\gamma},M)\to \mc LM^{\times n}$, $\rho_{out}
:Maps(\tilde{\gamma},M)\to \mc LM^{\times m}$. Note that since in this paper we work in a
cohomological rather than a homological setting, our choices of incoming and outgoing
loops are opposite to the ones in \cite{CG}, i.e. the outgoing
components of ${\gamma}$ are disjoint circles. In this notation, we get the first half of the desired map as the induced map on cohomology of $\rho_{in}$, i.e.
$\rho_{in}^\ast:H^\ast(\mc LM)^{\otimes n}\to H^\ast(Maps(\tilde{\gamma},M))$.

Now, note that $\rho_{out}$ is an embedding of infinite dimensional manifolds with finite codimension $c\cdot dim(M)$, which map the chords to a diagonal of marked points in $\mc LM^{\times m}$. We need to use the Umkehr map using the Thom space $Thom(Maps(\tilde{\gamma},M))$ of the normal bundle of $Maps(\tilde{\gamma},M)\hookrightarrow \mc LM^{\times m}$. More precisely,
$Thom(Maps(\tilde{\gamma},M))$ is the one-point compactification of the unit disk
bundle of $Map(\tilde{\gamma},M)$ in $\mc LM^{\times m}$. Then, we obtain the desired
Umkehr map, $\rho^!_{out}$, as a composition of the Thom collapse map $\tau:\mc
LM^{\times m} \to Thom(Maps(\tilde{\gamma},M))$ and the Thom isomorphism $t^\ast$,
$$ \rho_{out}^!: H^\ast(Maps(\tilde{\gamma},M)) \overset{t^\ast}\to H^\ast(Thom(Maps(\tilde{\gamma},M))) \overset{\tau^\ast} \to H^\ast(\mc LM)^{\otimes m}, $$ where $t^*$ is of degree $-c \cdot dim(M)$. With this the bottom row of diagram \eqref{expanded-commut-diag} is defined as the composition of $\rho^\ast_{in}$ with $\rho^!_{out}$,
\begin{equation}\label{CG-maps}
H^\ast(\mc LM)^{\otimes n} \overset{\rho^*_{in}} \longrightarrow H^\ast(Maps(\tilde{\gamma},M)) \overset{\rho^!_{out}} \longrightarrow H^\ast(\mc LM)^{\otimes m}.
\end{equation}
The map \eqref{CG-maps} describes the string topology operation from \cite{CG}. All three spaces used in its definition are fibrations which assign to a map $Y\to M$ its value on some fixed points, determined by the graph $\gamma$,
\begin{eqnarray*}
ev:\mc LM^{\times n} &\to & M^{\times q}, \\
ev:Maps(\tilde \gamma,M)&\to & M^{\times k}, \\
ev:\mc LM^{\times m} &\to & M^{\times q}.
\end{eqnarray*}
Here, we let the fixed points consist of all points, which are either marked points of the inputs or outputs of the ribbon graph $\gamma$, or points, where an intersection occurs, i.e. where a chord is attached. Thus, the cohomological string topology operation given by \eqref{CG-maps} fits into the following commutative diagram,
\begin{equation}\label{sq-in-out}
\xymatrix{
H^*(\mc LM^{\times n}) \ar[r]^{\rho^*_{in}\quad} &  H^*(Maps(\tilde\gamma,M)) \ar[r]^{\quad\rho^!_{out}} & H^*(\mc LM^{\times m}) \\ 
H^*(M^{\times q})\ar[r]^{\Delta_{in}^*} \ar[u]^{ev^*} & H^*(M^{\times k})\ar[r]^{\Delta_{out}^!}\ar[u]^{ev^*} & H^*(M^{\times q})\ar[u]^{ev^*} }
\end{equation}
where the bottom maps $\Delta_{in}:M^{\times k}\to M^{\times q}$ and $\Delta_{out}:M^{\times k}\to M^{\times q}$ represent the diagonal $M\to M\times M$, applied $(k-q)$ times according to the combinatorics of the points identified in the inputs and the outputs of the ribbon graph $\gamma$, and $\Delta_{out}^!$ is the Umkehr map associated to $\Delta_{out}$. Notice, that both $\rho^!_{out}$ and $\Delta^!_{out}$ are maps of degree $-(q-k)dim(M)$.

We model \eqref{sq-in-out} via a method of Felix and Thomas \cite[section 4]{FT} using a Poincar\'e duality model from Lambrechts and Stanley \cite{LS}. This will then be identified with the description from step 2 using the functoriality of the construction. More precisely, take a Poincar\'e duality model $(A=\bigoplus_{i=0}^{dim(M)} A^i,d, \mu,\phi)$ of the manifold $M$, i.e. $(A,d,\mu)$ is a finite dimensional commutative associative, differential graded algebra, quasi-isomorphic to the de Rham forms $\A$, and $\phi:A\to A^*$ is an $A$-bimodule isomorphism of degree $-dim(M)$ representing Poincar\'e duality on homology. Dualizing the coproduct $\mu^*$ on $A^*$ via $\phi$, we obtain the comultiplication $\Delta:A\to A\otimes A$, which is well known to represent a model of the Umkehr map of the diagonal embedding $M\to M\times M$, cf. \cite[section 4]{FT}. More generally, the Umkehr map $\Delta^!_{out}$ is given by a multiple application this comultiplication $\Delta^{\circ (q-k)}:A^{\otimes k}\to A^{\otimes q}$ according to the combinatorics of the outputs of the graph $\gamma$. Clearly, the map $\Delta^*_{in}$ can also  be modeled on $A$ via multiple application of the mutliplication $\mu^{\circ (q-k)}:A^{\otimes q}\to A^{\otimes k}$ according to the combinatorics of the inputs of $\gamma$. 

In order to obtain the top row of diagram \eqref{sq-in-out}, we recall that the two sided (normalized) bar construction $B(A,A,A):=\bigoplus_{n\geq 0} A\otimes (sA/\C)^{\otimes n}\otimes A$ on $A$ is semifree resolution of $A$ as $A$-bimodules, see \cite[Section 2]{FT} for more details. By the property of semifree models, cf. \cite[Chapter 7]{FHT}, it follows, that the input loops (respectively the output loops) are modeled, via the above fibrations, by an identification of the endpoints of $q$ intervals represented by $B(A,A,A)^{\otimes q}$ via multiple tensoring by $A$ over $A\otimes A$. We obtain $B(A,A,A)^{\otimes q}\stackrel {in} \otimes_{A^{\otimes 2q}}A^{\otimes q}$ (respectively $B(A,A,A)^{\otimes q}\stackrel {out} \otimes_{A^{\otimes 2q}}A^{\otimes q}$), where $\stackrel {in} \otimes$ (respectively $\stackrel {out}\otimes$) indicates that the combinatorics of the tensor products are given by the combinatorics of the inputs (respectively outputs) of $\gamma$. Similarly, $Maps(\tilde\gamma,M)$ has a model given by identification of additional $(q-k)$ points, so that we obtain $B(A,A,A)^{\otimes q}\stackrel {\tilde \gamma} \otimes_{A^{\otimes 4q-2k}}A^{\otimes 2q-k}\cong B(A,A,A)^{\otimes q}\stackrel {\tilde \gamma} \otimes_{A^{\otimes 2q}}A^{\otimes k}$. Using again the semifree property of $B(A,A,A)$ applied to the above fibrations, we determine the left square in diagram \eqref{sq-in-out} by applying $(q-k)$ diagonals represented by products $\mu^{\circ(q-k)}$ via $$ A^{\otimes q}\cong A^{\otimes q}\otimes _{A^{\otimes 2(q-k)}}A^{\otimes 2(q-k)}\quad\stackrel {id\otimes \mu^{\circ(q-k)}} \longrightarrow \quad A^{\otimes q}\otimes _{A^{\otimes 2(q-k)}}A^{\otimes (q-k)}\cong A^{\otimes k}, $$ to obtain,
\begin{equation}\label{sq-left}
\xymatrix{
B(A,A,A)^{\otimes q}\stackrel {in} \otimes_{A^{\otimes 2q}}A^{\otimes q}  \ar[rrr]^{id_{B(A,A,A)^{\otimes q}}\otimes \mu^{\circ(q-k)}} &&&  B(A,A,A)^{\otimes q}\stackrel {\tilde\gamma} \otimes_{A^{\otimes 2q}}A^{\otimes k} \\ 
A^{\otimes q}\ar[rrr]^{\mu^{\circ (q-k)}}\ar[u] &&& A^{\otimes k}\ar[u]  }
\end{equation}

Similarly, we obtain the model for the right square in diagram \eqref{sq-in-out}. Note that the methods of \cite[Proposition 5.4]{St} may be used to show that the pullback of a tubular neighborhood of $M^{\times q}\to M^{\times k}$ gives a tubular neighborhood of $\mc LM^{\times m}\to Maps(\tilde\gamma,M)$. This result together with the naturality of the Thom isomorphism $t^*$ shows that the right square in \eqref{sq-in-out} is given by $(q-k)$ comultiplications via
$$ A^{\otimes k}\cong A^{\otimes q}\otimes _{A^{\otimes 2(q-k)}}A^{\otimes (q-k)}\quad\stackrel {id\otimes \Delta^{\circ(q-k)}} \longrightarrow \quad A^{\otimes q}\otimes _{A^{\otimes 2(q-k)}}A^{\otimes 2(q-k)}\cong A^{\otimes q}, $$ to induce the model,
\begin{equation}\label{sq-right}
\xymatrix{
B(A,A,A)^{\otimes q}\stackrel {\tilde\gamma} \otimes_{A^{\otimes 2q}}A^{\otimes k}  \ar[rrr]^{id_{B(A,A,A)^{\otimes q}}\otimes \Delta^{\circ(q-k)}} &&&  B(A,A,A)^{\otimes q}\stackrel {out} \otimes_{A^{\otimes 2q}}A^{\otimes q} \\ 
A^{\otimes k}\ar[rrr]^{\Delta^{\circ (q-k)}}\ar[u] &&& A^{\otimes q}\ar[u]  }
\end{equation}

Thus, we see that the description of the string topology operation from \cite{CG} via the bar construction applies exactly the multiplications $\mu$ and comultiplications $\Delta$ as described in step 2. The only difference to the description in step 2 is, that instead of using the Poincar\'e model $A$, we applied the bar construction to the De Rham forms $\A$, and for the outputs, we used the module given by the currents $\mc C(M^{\times q})$. Now notice, that diagram  \eqref{sq-left} is functorial with respect to change of algebra $(A,d,\mu)\to (A',d',\mu')$. Furthermore, if we treat $A^{\otimes k}$ and $A^{\otimes q}$ as modules over $A$, then diagram \eqref{sq-right} is also functorial with respect to simultaneous change of the algebra, modules and module maps $(A,d,\mu,N_k\stackrel {\Delta_{out}} \longrightarrow N_q)\to (A',d',\mu',N'_k\stackrel {\Delta'_{out}} \longrightarrow N'_q)$. After connecting the Poincar\'e model $A$ with the de Rham forms $\A$, and the comultiplication $A^{\otimes k}\stackrel {\Delta^{\circ (q-k)}}\longrightarrow A^{\otimes q}$ with the application of the heat kernel at time zero $\A^{\otimes k}\stackrel {K(x,y,0)^{(q-k)}}\longrightarrow \mc C(M^\times k)$ as in step 2, we obtain a zig-zag of quasi-isomorphisms, relating both constructions. A spectral sequence argument shows, that these models are in fact quasi-isomormphic. This identifies the string topology operation with the operation described in step 2, and thus completes the proof of the Theorem.
\end{proof}

\section{Higher genus B-model}\label{B-model}

The higher genus B-model was rigorously constructed in \cite{C1,C2} and \cite{K, KS2, KKP}  for any
Calabi-Yau A$_\infty$ category. In this section we will discuss the way in which the explicit construction of this paper, applied to the Calabi-Yau elliptic space constructed from a Calabi-Yau manifold, gives a different, more geometric, construction of the $B$ model at all genera.

\subsection{}

Let $X$ be a smooth projective Calabi-Yau manifold.  Define a dg category $\op{Com}(X)$  whose objects are bounded complexes of (algebraic) vector bundles on $X$, and whose complex of
morphisms $E \to F$ is the Dolbeaut complex 
 $$\Hom_{\op{Com}(X)}^l (E,F) = \oplus_{i-j+k = l} \Omega^{0,i} (X, \Hom (E^i ,F^j ) ).$$   
\begin{theorem}[Keller, \cite{Ke}]
The Hochschild homology of the dg category $\op{Com}(X)$ is $H^{-\ast}(X, \Omega^{\ast}(X) )$.
\end{theorem}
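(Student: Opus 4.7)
The plan is to identify $\op{Com}(X)$ with a standard dg enhancement of the bounded derived category of coherent sheaves and then apply the Hochschild--Kostant--Rosenberg (HKR) theorem. Because $X$ is smooth and projective, every object of $D^b(\op{Coh}(X))$ admits a resolution by a bounded complex of vector bundles, and for $E,F$ such complexes the Dolbeault complex $\bigoplus_{i-j+k=l} \Omega^{0,i}(X, \Hom(E^i, F^j))$ computes $R\Hom(E,F)$. Consequently $\op{Com}(X)$ is a pretriangulated dg enhancement of $D^b(\op{Coh}(X))$, equivalently of the dg category $\op{Perf}(X)$ of perfect complexes.

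The next step is to invoke Keller's general results on Morita invariance of Hochschild homology for dg categories. Since $\op{Com}(X)$ and any other dg enhancement of $\op{Perf}(X)$ are quasi-equivalent, their Hochschild complexes are quasi-isomorphic, so
\[
HH_\ast(\op{Com}(X)) \;\cong\; HH_\ast(\op{Perf}(X)) \;=:\; HH_\ast(X),
\]
where on the right one takes the usual Hochschild homology of the scheme $X$, modelled for instance by $R\Hom_{\mc O_{X \times X}}(\Delta_\ast \mc O_X, \Delta_\ast \mc O_X)$.

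Finally, one applies the HKR theorem for the smooth variety $X$: the structure sheaf of the diagonal admits, Zariski-locally, a Koszul resolution that identifies the derived self-intersection $\Delta^\ast \Delta_\ast \mc O_X$ with the sheaf of polyvector fields $\bigoplus_q \Omega^q_X[q]$ (up to a shift), and dualising via the Calabi--Yau holomorphic volume form $Vol_X$ converts polyvector fields into differential forms. Taking cohomology then yields
\[
HH_n(X) \;\cong\; \bigoplus_{q-p=n} H^p(X, \Omega^q_X),
\]
which in the paper's grading convention is written $H^{-\ast}(X, \Omega^\ast(X))$.

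The main obstacle is the Morita-invariance step: one must verify that $\op{Com}(X)$ is in fact quasi-equivalent (and not merely triangle-equivalent on homotopy categories) to a dg model where HKR can be applied directly, and that passage from a generating subcategory of vector bundles to all of $\op{Perf}(X)$ really preserves Hochschild homology. The HKR computation itself is classical, so the work is concentrated in the bookkeeping of dg-categorical enhancements and in matching the Dolbeault bigrading with the Hodge bigrading to obtain the sign conventions implicit in $H^{-\ast}(X,\Omega^\ast(X))$.
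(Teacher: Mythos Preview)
The paper does not prove this statement at all: it is stated as a theorem of Keller with a citation to \cite{Ke} and is used as a black box in the discussion of the B-model. So there is no ``paper's own proof'' to compare against.

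That said, your outline is essentially the correct way such a result is established: identify $\op{Com}(X)$ with a dg enhancement of $\op{Perf}(X)$, invoke Morita invariance of Hochschild homology of dg categories (this is exactly what Keller's paper supplies), and then apply HKR for the smooth projective variety $X$. One small confusion to clean up: the derived self-intersection $\Delta^\ast \Delta_\ast \mc O_X$ is already $\bigoplus_q \Omega^q_X[q]$, i.e.\ differential forms, not polyvector fields; polyvector fields arise in Hochschild \emph{cohomology}. The Calabi--Yau volume form is therefore not needed for this identification, and that sentence should be dropped. With that correction your sketch is sound and matches the standard argument behind the cited result.
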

If we fix any object $E \in \op{Com}(X)$, then $\Hom_{\op{Com}(X)}(E,E)$ is a Calabi-Yau elliptic algebra; and the choice of Hermitian metric on $X$ together with a Hermitian metric on the vector bundles making up the complex $E$ leads to a Hermitian metric on this Calabi-Yau elliptic algebra.  

It has been shown in \cite{BV} that the derived category of sheaves on $X$ has a strong generator $E$.  Since $X$ is projective, $E$ can be taken to be a finite complex of vector bundles on $X$, and so an object of $\op{Com}(X)$.   The fact that $E$ is a strong generator implies  that  that the dg category of modules over $\Hom_{\op{Com}(X)}(E,E)$ is equivalent to the category $\op{Com}(X)$.  Morita invariance of Hochschild homology then implies that 
$$
HH_\ast (\Hom_{\op{Com}(X) }(E,E)) = H^{-\ast}(X, \Omega^{\ast}(X) ).
$$

The dg algebra $\Hom_{\op{Com}(X) }(E,E)$ is a Calabi-Yau elliptic algebra. Thus, the construction of this paper gives an explicit action of the chains of the moduli space of Riemann surfaces on the Hodge cohomology of $X$, which is dual to the space
$$
\oplus H^i (X, \wedge^j TX ) 
$$
of extended deformations of the complex structure on $X$. 

The results of \cite{C2} show how one can construct a partition function for the TCFT associated to the Calabi-Yau elliptic algebra $\Hom_{\op{Com}(X) }(E,E)$.  This is a state in a Fock space associated to the periodic cyclic homology of the category $\op{Com}(X)$, with a certain symplectic pairing.  This symplectic vector space can be identified with $H^\ast(X)((t)) [d]$ where $d = \dim X$.  This partition function is an integral over the ``fundamental chain'' of the moduli spaces of surfaces. Conjecturally, this partition function is mirror to the generating function for Gromov-Witten invariants with descendants (these invariants are integrals over the fundamental class of Deligne-Mumford space).   Since the construction of this paper is completely explicit, it should be possible, in principle, to give a formula for this partition function.  The problem of explicitly writing down the partition function amounts, in our formalism, to finding a solution of the Sen-Zwiebach's quantum master equation in a certain dg Lie algebra construction from our explicit models $MG(n,m)$ for Segal's category of moduli spaces. 

In the A model, the Gromov-Witten invariants give operations on the compactified moduli spaces of surfaces, not just the uncompactifed spaces as in this paper.   A theorem of Katzarkov, Kontsevich, and Pantev \cite{K, KKP} shows that if  the circle action is homotopically trivial, then the action extends to the moduli space of stable curves.  The choice of a trivialisation of the circle action leads to an extension to the Deligne-Mumford compactification. The fact that the Hodge to de Rham spectral sequence degenerates for $X$ implies that this circle action is trivial. A choice of a splitting of the Hodge filtration on $X$ leads to a trivialisation of the circle action.

\bibliographystyle{amsalpha}

\end{document}